\theoremstyle{plain}
\newtheorem{theorem}{Theorem}[section]
\newtheorem{corollary}[theorem]{Corollary}
\newtheorem{proposition}[theorem]{Proposition}
\newtheorem{lemma}[theorem]{Lemma}
\newtheorem{remark}[theorem]{Remark}
\newtheorem{assumptions}[theorem]{Assumptions}
\numberwithin{theorem}{section}
\numberwithin{equation}{section}
\newcommand{\average}{{\mathchoice {\kern1ex\vcenter{\hrule height.4pt
				width 6pt depth0pt} \kern-9.7pt} {\kern1ex\vcenter{\hrule
				height.4pt width 4.3pt depth0pt} \kern-7pt} {} {} }}
\def\R{\mathbb{R}}
\renewcommand{\a }{\alpha }
\renewcommand{\b }{\beta }
\renewcommand{\d}{\delta }
\newcommand{\D }{\Delta }
\newcommand{\e }{\varepsilon }
\newcommand{\g }{\gamma}
\newcommand{\G }{\Gamma}
\renewcommand{\l }{\lambda }
\newcommand{\n }{\nabla }
\newcommand{\s }{\sigma }
\renewcommand{\t }{\tau }
\renewcommand{\th }{\theta }
\newcommand{\ov}{\overline}
\newcommand{\be}{\begin{equation}}
\newcommand{\ee}{\end{equation}}
\newcommand{\de}{\partial}
\newcommand{\ti}{\widetilde}
\renewcommand{\k}{\kappa}
\newcommand{\calH }{\mathcal{H}}
\newcommand{\calO }{\mathcal{O}}
\newcommand{\calE }{\mathcal{E}}
\newcommand{\calL }{\mathcal{L}}
\newcommand{\calD }{\mathcal{D}}
\newcommand{\calB }{\mathcal{B}}
\newcommand{\calS}{{\mathcal S}}
\newcommand{\mbL }{\mathbb{L}}
\newcommand{\N}{\mathbb{N}}
\newcommand{\cA}{{\mathcal A}}
\newcommand{\cB}{{\mathcal B}}
\newcommand{\cC}{{\mathcal C}}
\newcommand{\cG}{{\mathcal G}}
\newcommand{\cH}{{\mathcal H}}
\newcommand{\cK}{{\mathcal K}}
\newcommand{\cL}{{\mathcal L}}
\newcommand{\cO}{{\mathcal O}}
\newcommand{\cX}{{\mathcal X}}
\newcommand{\cY}{{\mathcal Y}}
\DeclareMathOperator{\id}{id}
\renewcommand{\epsilon}{\varepsilon}
\begin{document}
	\title[Short time existence and  smoothness  of the nonlocal mean curvature flow]{Short time existence  and smoothness of the nonlocal mean curvature flow of graphs}
	
	\author{Anoumou Attiogbe}
	\address{A. A. : African Institute for Mathematical Sciences in Senegal, KM 2, Route de
		Joal, B.P. 14 18. Mbour, Senegal.}
	\email{anoumou.attiogbe@aims-senegal.org}
	
	\author{Mouhamed Moustapha Fall}
	\address{M. M. F.: African Institute for Mathematical Sciences in Senegal, KM 2, Route de
		Joal, B.P. 14 18. Mbour, Senegal.}
	\email{mouhamed.m.fall@aims-senegal.org}
	
	
	\author{Tobias Weth}
	\address{T. W.: Institut f\"ur Mathematik,  Goethe-Universit\"at,  Frankfurt,  Robert-Mayer-Str. 10,  D-60629 Frankfurt}
	\email{weth@math.uni-frankfurt.de}
	\thanks{Keywords: Nonlocal mean curvature flow; Quasilinear evolution equations; Analytic semigroup theory;    Fixed point theorem.} 
	\maketitle

	\begin{abstract}
	We consider the  geometric evolution problem of entire graphs moving by  fractional mean curvature. For this, we study the associated nonlocal quasilinear evolution equation satisfied by the family of graph functions. We establish, using an analytic semigroup approach, short time existence, uniqueness and optimal H\"older regularity in time and space of classical solutions of the nonlocal equation, depending on the regularity of the initial graph. The method also yields $C^\infty-$smoothness estimates of the evolving graphs for positive times.
	\end{abstract}


	\section{Introduction}
In the present paper, we study the geometric evolution problem of hypersurfaces moving by fractional mean curvature. More precisely we consider the problem of existence and uniqueness of a family of (sufficiently regular) open subsets $\{E(t)\}_{t>0}$ of $\R^N$ satisfying
	\begin{align}\label{GeneralFormulation}
		\partial_tX_t\cdot\nu(X_t):=-H_{E(t)}^\alpha(X_t),\;\;\;\mbox{for all $X_t\in \partial E(t)$ and }t\in[0,T],
	\end{align}
where,  	$\nu$ is the unit normal vector field on $\partial E(t)$ and $H_{E(t)}^\alpha$ is the fractional mean curvature of order $\alpha\in(0,1)$ at $X_t\in \partial E(t)$.
%
Recall that for a set $E\subset \R^N$ of class $C^{1+\b}$, with  $\beta>\a  $,  the fractional (nonlocal) mean curvature of $\partial E$  is well defined and it is given  at a point $x\in \de E$ by 
\begin{align}\label{Definition01}
H_{E}^\alpha(x):= P. V. \int_{\R^N} \frac{\mathbbm{1}_{E^c}(y)-\mathbbm{1}_{E}(y)}{|x-y|^{N+\alpha}} dy=\lim_{\e \to 0} \int_{\R^N \setminus B_\e(x)} \frac{\mathbbm{1}_{E^c}(y)-\mathbbm{1}_{E}(y) }{|x-y|^{N+\alpha}} dy,
\end{align}
where $\mathbbm{1}_{E}$ denotes the characteristic function of the set  $E$ and ${E^c}:=\mathbb{R}^{N}\setminus E$. 
We recall that the notion of fractional (nonlocal) mean curvature of order $\alpha\in(0,1)$ appeared for the first time in the work of Caffarelli and Souganidis in \cite{caffarelli2010convergence} around 2008. Since that time, the nonlocal mean curvature  has been studied extensively in various settings, see e.g. \cite{abatangelo2014notion,bucur2016nonlocal, saez2019evolution,fall2018constant} and the references therein.

As a nonlocal conterpart of the classical mean curvature flow (see e.g. \cite{Huisken1,Mantegazza} and the references therein), the nonlocal mean curvature flow has attracted much interest in recent years, see e.g \cite{CSV,cameron2019eventual,cesaroni2020symmetric,cesaroni2021fractional,cintifractional,imbert2009level,julin2020short,saez2019evolution, attiogbe2021nonlocal} and the survey paper \cite{cintifractional}. 
  The first paper dealing with the existence and uniqueness of solutions of this flow  is \cite{imbert2009level}, where Imbert  used the level set formulation of the geometric flow to prove   existence and uniqueness  of   viscosity solutions  while the recent  papers \cite{cameron2019eventual} and   \cite{cesaroni2021fractional} deal with  regularity results  of viscosity solution of  the flow starting from  Lipschitz graphs. We also mention the paper \cite{saez2019evolution} where the autors proved   a
 comparison principle  for the flow \eqref{GeneralFormulation} and   found bounds on
 the maximal existence time and uniqueness of smooth solutions.

Despite the attention that the nonlocal mean curvature flow has already received, the existence and regularity of classical solutions remained an open problem until the recent work \cite{julin2020short}. In this paper, the authors prove short time existence and uniqueness of  classical solution  to the fractional mean curvature flow, starting from  a  bounded   $C^{1,1}$ initial set.

In the present paper,  we consider the complementary case of short time existence and uniqueness of classical solutions of problem \eqref{GeneralFormulation} when the sets   $E(t)$ are given by subgraphs of functions    $u(t,\cdot)\in C^{1+\b}_{loc}(\R^{N-1})$, where $\b>\alpha$. 
We recall first  that if  $ E_u:=\{(x,y)\in\mathbb{R}^{N-1}\times\mathbb{R}:y<u(x)\}$ then, see e.g. \cite[Proposition 3.5]{fall2018constant},  we have that   
\begin{align}
H(u)(x): =H^\a_{E_u}(x,u(x))= P.V.\int_{\R^{N-1}}\frac{\cG(p_u({x},{y})) }{|{x}-{y}|^{N-1+\a}} d{y},
\end{align}
where 
 \be \label{eq:P_u-G}
  p_u({x},{y})= \frac{u({y})-u({x})}{|{x}-{y}|} ,
\qquad
\cG(p):=-\int_{-p}^{p}\frac{d\t}{(1+\t^2)^{\frac{N+\a}{2}}}.
\ee
We point out that, due to the boundedness of the function $\cG$, the expression $H(u)(x)$ is well-defined if $u: \R^{N-1} \to \R$ is measurable and of class $C^{1+\beta}$ for some $\beta>\alpha$ in a neighborhood of $x$. With this expression of the fractional mean curvature, we can derive the quasilinear evolution problem corresponding to   \eqref{GeneralFormulation} when $E_u(t):=\{(x(t),y(t))\in\mathbb{R}^{N-1}\times\mathbb{R}:y(t)<u(t,x(t))\}$,  with  
 $u: [0, T]\times  \mathbb{R}^{N-1}\to \R$.  Indeed, recall that the unit interior normal vector field on $\partial E_u(t)$ is given by $\nu(X_t):=\frac{(-\nabla u(t,x(t)),1) }{\sqrt{1+|\nabla u(t,x(t))|^2}}$. Therefore for every $	X_t\in\partial E_u(t)=\{(x(t),u(t,x(t))):x(t)\in\mathbb{R}^{N-1}\}$,  we have that
\begin{align*}
	\partial_tX_t=\Big(\dot{x}(t),\dot{x}(t)\cdot\nabla u(t,x(t))+\partial_tu(t,x(t))\Big).
\end{align*}
Hence 
\begin{align}\label{NormalVelocity}
	\partial_tX_t\cdot\nu(X_t)&=\frac{-\dot{x}(t)\cdot\nabla u(t,x(t))}{\sqrt{1+|\nabla u(t,x(t))|^2}}+\frac{\dot{x}(t)\cdot\nabla u(t,x(t))}{\sqrt{1+|\nabla u(t,x(t))|^2}}+\frac{\partial_tu(t,x(t))}{\sqrt{1+|\nabla u(t,x(t))|^2}}\nonumber\\&=\frac{\partial_tu(t,x(t))}{\sqrt{1+|\nabla u(t,x(t))|^2}}.
\end{align}
Therefore, from \eqref{NormalVelocity} and \eqref{GeneralFormulation}, the evolution of $u$  is given by  
the flow associated to the quasilinear evolution  equation 
\begin{align}\label{QuasiLinear01}
\de_t u= -{\sqrt{1+|\nabla u|^2}} H(u),\;\;\;t\in(0,T],\;\;\;\;\;u(0)=u_0.
\end{align}

 Our first main result is  the following.
\begin{theorem}
\label{MainResult1}
	Let  $\nu>0$, $\b\in (\a,1)$,     $\rho\in (0,\frac{1}{1+\a})$ and  $\g_\rho:=\b+\rho(1+\a)$.  Then  for all  $u_0\in C^{1+\g_\rho}_{loc}(\mathbb{R}^{N-1})$,  with $\|\n u_0\|_{C^{\g_\rho}(\R^{N-1})}\leq \nu$,    there  exist  $T,C_0>0$ only depending on $\rho,\a,\b,\g,N$ and $\nu$ such that the problem
	\begin{align}
	\label{eq:main-eq-s1-cor}
	\left\{
	\begin{array}{rll}
	\de_tu+{\sqrt{1+|\nabla u|^2}} H(u)&=0&\hspace{.5cm}\mbox{in }\;[0,T]\times\mathbb{R}^{N-1}	\\
	u(0)&=u_0&\hspace{.5cm}\mbox{in }\;\mathbb{R}^{N-1}
	\end{array}
	\right.
	\end{align}
admits a  unique    solution $u\in C^{\rho}([0,T],C^{1+\b}_{loc}(\R^{N-1}))\cap C^{1+\rho}([0,T], C^{\b-\a}_{loc} (\R^{N-1}))$ satisfying 
\be \label{eq:main-1-cor}
	  \|u-u_0 \|_{C^{\rho}([0,T],C^{1+\b} (\R^{N-1}))\cap C^{1+\rho}([0,T], C^{\b-\a}  (\R^{N-1}))}  \leq C_0  .
\ee
If, in addition, $\n u_0\in C^{1+\g_\rho}(\mathbb{R}^{N-1})$ then for all $\b'\in (\a,\b)$    there exists $C>0$ only depending on $\rho,\a,\b,\g,N,\nu,T$ and $\b'$   such that 
\be \label{eq:main-3-cor}
	 	  \|\n  u \|_{C^{\rho}([0,T],C^{1+ \b'}(\R^{N-1}))}  \leq C \|\n u_0\|_{ C^{1+\g_\rho}(\mathbb{R}^{N-1})}  .
\ee
\end{theorem}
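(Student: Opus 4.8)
The plan is to recast \eqref{eq:main-eq-s1-cor} as an abstract quasilinear Cauchy problem and solve it by combining maximal $C^\rho$-regularity for analytic semigroups (Da Prato--Grisvard, Lunardi) with a Banach fixed point argument. Writing $Q(u):=\sqrt{1+|\n u|^2}\,H(u)$ and $v:=u-u_0$, so that $v(0)=0$, the problem becomes
\be\label{plan-abstract}
	\de_t v+\cA v=\cF(v),\qquad v(0)=0,
\ee
where $\cA:=DQ(u_0)$ is the Fréchet linearization of $Q$ at the initial datum and $\cF(v):=-Q(u_0)-R(v)$ with $R(v):=Q(u_0+v)-Q(u_0)-\cA v$. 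Differentiating the integral formula for $H$ under the principal value yields
\be\label{plan-lin}
	\cA v(x)=\sqrt{1+|\n u_0(x)|^2}\;\mathrm{P.V.}\!\int_{\R^{N-1}}\bigl(-\cG'(p_{u_0}(x,y))\bigr)\frac{v(x)-v(y)}{|x-y|^{N+\a}}\,dy+(\text{terms of order}\le 1),
\ee
and since $-\cG'(p)=2(1+p^2)^{-\frac{N+\a}{2}}$ is positive and bounded below on compact sets, the leading term is a uniformly elliptic nonlocal operator of order $1+\a$ whose coefficient lies in $C^\b$ (because $\n u_0\in C^{\g_\rho}\embed C^\b$). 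Observe that $H$, hence $\cA$ and $Q(u_0)$, depends on $u_0$ only through difference quotients, so all these objects are controlled by $\norm{\n u_0}_{C^{\g_\rho}}\le\nu$ alone; this is why one may look for $v$ in spaces of \emph{uniformly} (not merely locally) Hölder functions, consistently with the uniform norm appearing in \eqref{eq:main-1-cor}.

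\textbf{Linear theory (the hard part).} I would take $X:=C^{\b-\a}(\R^{N-1})$ as base space and $D(\cA):=C^{1+\b}(\R^{N-1})$ as domain (both in the little-Hölder / continuous-interpolation sense) and prove that $-\cA$ generates an analytic semigroup on $X$ with this domain, and that $D_\cA(\rho,\infty)=C^{(\b-\a)+\rho(1+\a)}=C^{\g_\rho-\a}$. The route is the usual localization-and-freezing: cover $\R^{N-1}$ by a uniform partition of unity, freeze the principal coefficient of \eqref{plan-lin} at each point, compare the resulting constant-coefficient model with a positive multiple of $(-\D)^{\frac{1+\a}{2}}$ (whose resolvent estimates on the Hölder scale are classical), and absorb, by interpolation, the commutators produced by the cut-offs and the order-$\le1$ remainder in \eqref{plan-lin}, all of strictly lower order. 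From generation one then obtains maximal $C^\rho$-regularity: for $f\in C^\rho([0,T],X)$ with $f(0)\in D_\cA(\rho,\infty)$ the solution of $\de_t w+\cA w=f$, $w(0)=0$, satisfies
\be\label{plan-maxreg}
	\norm{w}_{C^\rho([0,T],D(\cA))\cap C^{1+\rho}([0,T],X)}\le C\bigl([f]_{C^\rho([0,T],X)}+\norm{f(0)}_{D_\cA(\rho,\infty)}\bigr),
\ee
with $C$ uniform for $T$ in a fixed interval $(0,T_0]$. This is exactly where $u_0\in C^{1+\g_\rho}_{\loc}$ is used: since $H$ maps $C^{1+\g_\rho}$ into $C^{\g_\rho-\a}$, we get $Q(u_0)\in C^{\g_\rho-\a}=D_\cA(\rho,\infty)$ with $\norm{Q(u_0)}_{D_\cA(\rho,\infty)}\le C(\nu)$, so the time-independent source $-Q(u_0)$ is admissible in \eqref{plan-maxreg}. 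I expect this linear step --- generation plus the \emph{exact} identification of $D(\cA)$ and of the trace space $D_\cA(\rho,\infty)$ for a variable-coefficient \emph{nonlocal} operator whose symbol is only $C^\b$ in $x$ --- to be the main obstacle: unlike in the local second-order case, freezing coefficients leaves genuinely nonlocal commutators that must be controlled on the Hölder scale. A related technical point, needed below, is the $C^1$-dependence of $Q$ between the relevant Hölder spaces, with $DQ$ locally Lipschitz; this follows from direct estimates on $\cG$ and its derivatives.

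\textbf{Fixed point.} Put $\mathbb{E}_T:=\{v\in C^\rho([0,T],C^{1+\b})\cap C^{1+\rho}([0,T],C^{\b-\a}):v(0)=0\}$ and let $\Phi(v)$ denote the solution of $\de_t w+\cA w=\cF(v)$, $w(0)=0$. Since $R(v)(0)=0$, the value $\cF(v)(0)=-Q(u_0)$ is independent of $v$, while $R(0)=0$ and
\be\label{plan-R}
	R(v)-R(w)=\int_0^1\bigl(DQ(u_0+w+s(v-w))-DQ(u_0)\bigr)[v-w]\,ds .
\ee
Using that $DQ$ is locally Lipschitz from $C^{1+\b}$ into $\calL(C^{1+\b},C^{\b-\a})$ together with the bound $\norm{z(t)}_{C^{1+\b}}\le[z]_{C^\rho([0,T],C^{1+\b})}\,t^\rho$ valid for any $z\in\mathbb{E}_T$ (because $z(0)=0$), one gets, on a ball $B_R\subset\mathbb{E}_T$, the estimate $[R(v)-R(w)]_{C^\rho([0,T],C^{\b-\a})}\le\kappa(R,T)\,\norm{v-w}_{\mathbb{E}_T}$ with $\kappa(R,T)\to0$ as $T\downarrow0$ for each $R$ (and an analogous bound for the temporal Hölder seminorm). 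Feeding this and the identity $[\,{-}Q(u_0)\,]_{C^\rho([0,T],X)}=0$ into \eqref{plan-maxreg} gives $\norm{\Phi(v)}_{\mathbb{E}_T}\le C\kappa(R,T)R+C\,C(\nu)$ and $\norm{\Phi(v)-\Phi(w)}_{\mathbb{E}_T}\le C\kappa(R,T)\norm{v-w}_{\mathbb{E}_T}$; choosing $R$ a suitable multiple of $C(\nu)$ and then $T$ small so that $C\kappa(R,T)\le\tfrac12$ makes $\Phi$ a contraction of $B_R$ into itself. Its unique fixed point is the sought solution $u=u_0+v$, and \eqref{eq:main-1-cor} holds with $C_0=R$. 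Uniqueness in the stated class follows by applying \eqref{plan-maxreg} to the linear equation $\de_t(u_1-u_2)+\cB(t)(u_1-u_2)=0$, $(u_1-u_2)(0)=0$, satisfied by the difference of two solutions, whose coefficients $\cB(t):=DQ(u_1(t))$ are under control once $u_1,u_2$ are known to lie in $C^\rho([0,T],C^{1+\b}_{\loc})\cap C^{1+\rho}([0,T],C^{\b-\a}_{\loc})$.

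\textbf{Interior smoothness.} For \eqref{eq:main-3-cor} I would exploit that $Q$ is translation-covariant, so applying $\de_{x_k}$ to $\de_t u=-Q(u)$ shows that $\de_{x_k}u$ solves the \emph{linear} nonlocal parabolic equation $\de_t(\de_{x_k}u)+\cB(t)(\de_{x_k}u)=0$ with $\cB(t):=DQ(u(t))$, whose principal coefficient $\sqrt{1+|\n u(t)|^2}\bigl(-\cG'(p_{u(t)}(\cdot,\cdot))\bigr)$ and order-$\le1$ part (built from $H(u(t))$ and $\n u(t)$) are, by the first part, bounded in $C^\rho([0,T],C^\b)$ and $C^\rho([0,T],C^{\b-\a})$, respectively. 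Running the linear maximal $C^\rho$-regularity of the previous step with this time-dependent family of coefficients --- which, having only $C^\b$ spatial regularity, forces the loss of an arbitrarily small amount of spatial regularity --- yields $\de_{x_k}u\in C^\rho([0,T],C^{1+\b'})$ for every $\b'\in(\a,\b)$, provided $\de_{x_k}u_0$ belongs to the corresponding trace space $C^{1+\b'+\rho(1+\a)}$. Since $1+\b'+\rho(1+\a)<1+\b+\rho(1+\a)=1+\g_\rho$, the additional hypothesis $\n u_0\in C^{1+\g_\rho}$ is precisely what is needed; tracking the constants through \eqref{plan-maxreg} then gives the linear bound \eqref{eq:main-3-cor}.
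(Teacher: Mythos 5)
Your overall strategy --- linearize $\cH$ at $u_0$, prove that the linearization generates an analytic semigroup, identify the interpolation space $\cD_{\cL_0}(\rho,\infty)$, and close with a Banach fixed point argument --- coincides with the paper's strategy for its core existence result, Theorem~\ref{MainResult}. The fixed-point estimates in your ``Fixed point'' paragraph (the superlinear vanishing of $R(v)-R(w)$ near $v=w=0$ via $\|z(t)\|_{C^{1+\b}}\le t^\rho\,[z]_{C^\rho([0,T],C^{1+\b})}$) match Lemma~\ref{Lemma001}, and your ``Interior smoothness'' paragraph (differentiate, view $\de_{x_k}u$ as a solution of a linear nonautonomous nonlocal parabolic problem, invoke maximal regularity with a loss of $\b-\b'$) is essentially the argument of Theorem~\ref{MainResult1-bnd}.

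However, there is one genuine gap in your plan: you never confront the fact that Theorem~\ref{MainResult1} admits $u_0$ that is only \emph{locally} $C^{1+\g_\rho}$ (e.g.\ a nonconstant periodic graph), so that the constant source term $-\cH(u_0)$ does not belong to the function space in which the semigroup acts. The generation statement the paper relies on (Proposition~\ref{lem:mbbL_analytic}, which rests on \cite[Corollary~2.17]{abels2009cauchy}) is formulated on $\cC^{\b-\a}_0(\R^{N-1})=\ov{C^\infty_c}^{\,\|\cdot\|_{C^{\b-\a}}}$, whose elements vanish at infinity, and the characterization Proposition~\ref{prop:car-inerpol-space} gives $\cD_{\cL_0}(\rho,\infty)=C^{\g_\rho-\a}\cap\cC^{\b-\a}_0$ --- \emph{not} $C^{\g_\rho-\a}$ alone, as you assert when you write ``$Q(u_0)\in C^{\g_\rho-\a}=D_\cA(\rho,\infty)$.'' A periodic $\cH(u_0)$ is in $C^{\g_\rho-\a}$ but not in $\cC^{\b-\a}_0$, so the maximal-regularity machine you invoke cannot be run directly on the original initial datum. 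The paper handles this by proving existence first for $u_0\in\cC^{1+\b}_0$ (Theorem~\ref{MainResult}), then cutting off $u_0$ by $\eta_n u_0\in\cC^{1+\b}_0$ with $\|\n(\eta_n u_0)\|_{C^{\g_\rho}}\le 2\nu$, obtaining solutions $u^n$ with bounds depending only on $\nu$, and passing to the limit via Arzel\`a--Ascoli. That approximation step is indispensable and entirely missing from your argument. If instead you intend to sidestep this by working in the little-Hölder scale $h^{\b-\a}$ (closure of $BC^\infty$) or directly in the large Hölder scale via the non-dense-domain Da Prato--Grisvard/Sinestrari theory, then you lose access to the Abels--Kassmann generation result and your proposed ``localization-and-freezing'' would have to be carried out in full --- this is not a cosmetic choice, since it bypasses the main technical input the paper uses.

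A secondary issue is uniqueness. You propose to conclude $u_1=u_2$ by feeding the homogeneous linear equation $\de_t w+\cB(t)w=0$, $w(0)=0$, into the maximal-regularity estimate \eqref{plan-maxreg}. But the uniqueness class in Theorem~\ref{MainResult1} is $C^\rho([0,T],C^{1+\b}_{loc})\cap C^{1+\rho}([0,T],C^{\b-\a}_{loc})$ with the uniform bound \eqref{eq:main-1-cor}, and $w=u_1-u_2$ need not lie in the little-Hölder (or $\cC_0$-type) spaces where the maximal-regularity estimate is valid, so the estimate cannot be applied to $w$ as you state. The paper avoids this by deriving uniqueness from a global comparison principle (Proposition~\ref{prop:mmp} and Corollary~\ref{cor:mmp-lnmc}), which works on $L^\infty([0,T],C^\b)\cap C^{1+\b}_{loc}$ via a cut-off argument that does not require $w$ to vanish at infinity.
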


The following result complements Theorem~\ref{MainResult1} by providing, under the same assumptions, smoothness estimates for positive times.

\begin{theorem}
\label{MainResult1-part-2}
Under the assumptions of Theorem~\ref{MainResult1}, we have $u(t,\cdot)\in C^\infty(\R^{N-1})$ for every $t\in (0,T]$.
Moreover, for every $\b'\in (\a,\b)$, $\rho\in (0,\frac{\a}{1+\a}]$ and  for all $k\in \N\setminus\{0\}$, there exists $C_k>0$ only depending on $\rho,\a,\b,\g,N,\nu,\b',T$ and $k$   such that 
\be \label{eq:main-2-cor}
	 	  \|t^k \n u \|_{C^{\rho}([0,T],C^{k+ \b'}(\R^{N-1}))}  \leq C_k  .
\ee
\end{theorem}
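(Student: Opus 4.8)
The plan is to upgrade the solution $u$ of Theorem~\ref{MainResult1} by a parabolic bootstrap in which the factor $t^k$ records iterated restarts of the flow at positive times. The mechanism is that, substituting the solution into its coefficients, \eqref{QuasiLinear01} becomes a \emph{linear} nonlocal parabolic equation of order $1+\a$ for $u$; an analytic-semigroup (parabolic Schauder) argument then improves the H\"older regularity of $u$ by a fixed amount --- limited by the current regularity of the coefficients --- and, since these coefficients inherit the improved regularity of $\nabla u$, the step can be iterated indefinitely, yielding $u(t,\cdot)\in C^\infty(\R^{N-1})$ for $t>0$ together with the quantitative bounds \eqref{eq:main-2-cor}.

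First I would put the equation in a form suitable for linear parabolic estimates. Since $\cG(0)=0$, one writes $\cG(p_u(x,y))=c_u(x,y)\,p_u(x,y)$ with $c_u(x,y)=\int_0^1\cG'\!\big(\tau p_u(x,y)\big)\,d\tau$, so that
\begin{align*}
	H(u)(x)=\int_{\R^{N-1}}\frac{c_u(x,y)\,\big(u(y)-u(x)\big)}{|x-y|^{N+\a}}\,dy =: \cK_{u}[u](x),
\end{align*}
where $c_u$ is bounded, bounded away from $0$ uniformly in terms of $\|\nabla u\|_\infty$, and inherits (in the anisotropic sense relevant for the estimates below) the regularity of $\nabla u$ in the $x$ variable; thus \eqref{QuasiLinear01} reads $\partial_t u=-\sqrt{1+|\nabla u|^2}\,\cK_{u}[u]$, a linear nonlocal parabolic equation of order $1+\a$ with $x$-coefficients of class $C^{\b}$ (for $t\in[0,T]$, by Theorem~\ref{MainResult1}) and H\"older in $t$. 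In the proof of Theorem~\ref{MainResult1} the frozen linearizations $\calL_{u(t)}$ of $v\mapsto-\sqrt{1+|\nabla v|^2}H(v)$ generate analytic semigroups on the little-H\"older spaces $h^s(\R^{N-1})$, so that the companion interior (in time) parabolic Schauder estimate is available, schematically of the form: if $u$ solves the above on $(t_0,T]$ and $\nabla u$ is bounded in $C^{\rho}([t_0,T];C^{\sigma})$, then for $t_1\in(t_0,T)$
\begin{align*}
	\big\|\nabla u\big\|_{C^{\rho}([t_1,T];\,C^{\sigma+\a})}\ \le\ C\,(t_1-t_0)^{-\kappa}\,\Big(\big\|\nabla u\big\|_{C^{\rho}([t_0,T];\,C^{\sigma})}+1\Big),
\end{align*}
with $C,\kappa$ depending only on $\rho,\a,\b,N,T$, on $\nu$, and on lower-order norms of $\nabla u$; the range $\rho\le\tfrac{\a}{1+\a}$ is dictated by these weighted estimates (the binding case being $k=1$). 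For the quantitative statement \eqref{eq:main-2-cor} one complements this by differentiating the equation in $x$ --- using translation covariance, $\partial_{x_i}u$ itself solves the linearized equation $\partial_t v=\calL_{u(t)}v$ --- and iterating the same estimate for the successive spatial derivatives, whose inhomogeneities involve only lower-order derivatives of $u$ and are therefore controlled at the previous stage.

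Then I would run the bootstrap. For $k=0$, Theorem~\ref{MainResult1} together with $\g_\rho>\b$ and \eqref{eq:main-1-cor} gives $\|\nabla u\|_{C^{\rho}([0,T];C^{\b}_{\loc})}\le C(\nu)$, so the coefficients above are uniformly of class $C^{\b}$. Applying the Schauder estimate on $[t_0,T]$ improves $\nabla u$ to $C^{\b+\a}$ there; the coefficients are then $C^{\b+\a}$, and a further application gives $\nabla u\in C^{\b+2\a}$, and so on, so that after $j$ steps $\nabla u(t,\cdot)\in C^{\b+j\a}$ for $t>0$, whence $u(t,\cdot)\in C^\infty(\R^{N-1})$ for $t\in(0,T]$. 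Choosing $t_0=t/2^{\,i}$ at the $i$-th step and tracking the constants --- a geometric product of the factors $(t_1-t_0)^{-\kappa}$ --- gives, for each $k$, the weighted bound $\|t^{k}\nabla u\|_{C^{\rho}([0,T];C^{k+\b'})}\le C_k$ with $C_k$ of the asserted form (the weight $t^k$ being in fact stronger than necessary, which leaves room for the small loss $\b\to\b'$ to absorb the finitely many $\e$'s along the way).

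The main obstacle is the linear parabolic Schauder/smoothing estimate for the nonlocal operator $\cK_{u}$ (equivalently, for the linearization $\calL_{u(t)}$) with coefficients of the \emph{limited} H\"older regularity available at each stage. The $x$-dependence of the kernel $c_u(x,y)/|x-y|^{N+\a}$ is delicate near the diagonal --- there $c_u(x,y)$ depends on $\nabla u(x)$ and on the direction $\tfrac{y-x}{|y-x|}$ and fails to be H\"older in $x$ uniformly in $y$ --- so one must exploit the graphical/geometric structure to recover the required interior-in-time estimate, with constants depending in a controlled way on the coefficient norms and with precisely the singular weights that convert $t^k$ into $k$ additional derivatives. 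Propagating these constants through the infinitely iterated bootstrap, and verifying that the genuinely nonlocal far-field contributions --- which are of strictly lower order, and smoothing --- do not interfere, are the remaining technical points.
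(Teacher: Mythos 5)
Your proposal correctly identifies the high-level strategy — bootstrap regularity for positive times by exploiting parabolic smoothing of the linearized nonlocal operator, with $t^k$ recording the derivative gain and the restriction $\rho\le\frac{\a}{1+\a}$ coming from the intermediate-space characterization of Proposition~\ref{prop:car-inerpol-space}. But there is a genuine gap at the heart of the argument: the weighted interior-in-time Schauder estimate
\begin{align*}
\|\nabla u\|_{C^{\rho}([t_1,T];C^{\sigma+\a})}\le C\,(t_1-t_0)^{-\kappa}\bigl(\|\nabla u\|_{C^{\rho}([t_0,T];C^{\sigma})}+1\bigr)
\end{align*}
is only asserted, not proved, and you yourself flag it as ``the main obstacle.'' For the kernel $c_u(x,y)/|x-y|^{N+\a}$, with $c_u$ merely of H\"older regularity inherited from $\nabla u$ and singular $x$-dependence near the diagonal, this estimate is precisely the hard technical content, and nothing in the proposal closes it. The geometric-product bookkeeping over intervals $[t/2^i,T]$ is also left unverified; with a gain of only $\a$ derivatives per step and a prefactor $(t_1-t_0)^{-\kappa}$, the total weight becomes $t^{-j\kappa}$ after $j\sim k/\a$ steps, and matching this against the clean $t^k$ in \eqref{eq:main-2-cor} requires precise control of $\kappa$ that is never supplied.

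The paper's proof (inside Theorem~\ref{MainResult1-bnd}) takes a related but cleaner route that avoids time-localization entirely. Instead of freezing coefficients and applying an interior Schauder estimate, it differentiates the equation $k$ times via difference quotients $\tau_h u$ (and spatial derivatives), obtaining $\partial_t U + L_h(t)U = f$ for $U=\frac{\partial^k}{\partial x_1\cdots\partial x_k}(\tau_h u)$, with an inhomogeneity $f$ controlled at the previous inductive stage. It then sets $v=t^{k+1}U$, so that $v(0)=0$ and the inhomogeneous term $(k+1)t^kU + t^{k+1}f$ is controlled in $C^\rho([0,T],C^{\b'-\a})$ by the inductive hypothesis. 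Because the initial datum vanishes, Lunardi's maximal regularity result for non-autonomous problems (\cite[Prop.~6.1.3]{lunardi2012analytic}), combined with the generation results of Lemma~\ref{lemm:SemigropeCorollary} and the intermediate-space inclusion \eqref{eq:calDLh0}, gives directly $v\in C^\rho([0,T],C^{1+\b'})$ with a constant $C_{k+1}$ that depends polynomially on $C_k$, with no diverging prefactor to track. This is the step that converts one additional power of $t$ into one additional spatial derivative, which is what \eqref{eq:main-2-cor} states. Your proposal would need to supply a proof of the interior smoothing estimate for the limited-regularity anisotropic kernel — essentially reproducing Lemma~\ref{SemigroupLemma001}, Proposition~\ref{lem:mbbL_analytic}, Proposition~\ref{prop:car-inerpol-space}, and the maximal-regularity argument in weighted form — before the bootstrap can go through; as written it does not constitute a proof.
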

Related to Theorem~\ref{MainResult1-part-2}, we also mention \cite{cesaroni2021fractional}, where the authors state that  solutions to the fractional mean flow of graph, starting from a Lipschitz graph, are $C^\infty$-smooth for positive times provided $H(u_0)\in L^\infty(\R^{N-1})$.  However   \cite{cesaroni2021fractional} does not provide enough details that confirm this statement.  Indeed, the authors apply  a priori H\"older regularity  estimates from \cite{Serra,ScSi}   to the linearization of the first equation of  \eqref{eq:main-eq-s1-cor}. However
 the linearization of  the weighted nonlocal mean curvature operator $u\mapsto {\sqrt{1+|\nabla u|^2}} H(u)$ does not fall in the class of nonlocal operators considered in  \cite{Serra,ScSi}. \\ 

Our next result provides  universal estimates of the Lipschitz norm and the  mean curvature of the evolving graph in terms of the initial data.
\begin{theorem}\label{th:mt-univ-est}
Under the assumptions of Theorem~\ref{MainResult1}, we have  
$$
\| \n u\|_{L^\infty((0,T)\times\R^{N-1} )}\leq \| \n u_0\|_{L^\infty( \R^{N-1} )}
$$
  and 
$$
\| \de_tu \|_{L^\infty((0,T)\times\R^{N-1} )}\leq   \| {\sqrt{1+|\nabla u_0|^2}}  H( u_0)\|_{L^\infty( \R^{N-1} )}.
$$
If moreover $u_0\in L^\infty(\R^{N-1})$, then    $
\|  u\|_{L^\infty((0,T)\times\R^{N-1} )}\leq \|  u_0\|_{L^\infty( \R^{N-1} )}.
$
\end{theorem}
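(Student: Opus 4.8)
The plan is to obtain all three estimates from a single comparison principle for the flow, which I would formulate as: \emph{if $u$ and $v$ both solve $\de_t w=-\sqrt{1+|\n w|^2}\,H(w)$ on $[0,T_0]\times\R^{N-1}$ in the regularity class of Theorem~\ref{MainResult1}, and $u-v$ is bounded on $[0,T_0]\times\R^{N-1}$, then $t\mapsto\sup_{\R^{N-1}}\big(u(t,\cdot)-v(t,\cdot)\big)$ is nonincreasing.} Granting this, the theorem follows quickly. Since the equation is invariant under spatial translations, $u_h:=u(\cdot,\cdot+h)$ solves it for every $h\in\R^{N-1}$, and $u_h-u$ is bounded because $u-u_0$ is bounded by \eqref{eq:main-1-cor} and $\|u_0(\cdot+h)-u_0\|_\infty\le|h|\,\|\n u_0\|_\infty$; hence $\sup_x(u(t,x+h)-u(t,x))\le\sup_x(u_0(x+h)-u_0(x))\le|h|\,\|\n u_0\|_\infty$, and dividing by $|h|$ and letting $h\to0$ in an arbitrary direction gives $\|\n u\|_{L^\infty}\le\|\n u_0\|_{L^\infty}$. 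Likewise the equation is invariant under time translations, so for small $s>0$ the map $u(\cdot+s,\cdot)$ solves it on $[0,T-s]$ with datum $u(s,\cdot)$; comparing with $u$ in both directions gives $\|u(t+s,\cdot)-u(t,\cdot)\|_\infty\le\|u(s,\cdot)-u_0\|_\infty\le\int_0^s\|\de_tu(\tau,\cdot)\|_\infty\,d\tau$, and dividing by $s$ and letting $s\to0^+$ — using that $\tau\mapsto\de_tu(\tau,\cdot)$ is continuous into $L^\infty(\R^{N-1})$ by \eqref{eq:main-1-cor} — yields $\|\de_tu\|_{L^\infty}\le\|\de_tu(0,\cdot)\|_{L^\infty}=\|\sqrt{1+|\n u_0|^2}\,H(u_0)\|_{L^\infty}$, by the equation at $t=0$. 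Finally, if $u_0\in L^\infty$, the constants $\pm\|u_0\|_\infty$ are stationary solutions — the subgraph of a constant is a halfspace, whose nonlocal mean curvature vanishes by symmetry, so $H\equiv0$ on constants — and $u\mp\|u_0\|_\infty$ is bounded by \eqref{eq:main-1-cor}; comparing $u$ with $\|u_0\|_\infty$ and $-\|u_0\|_\infty$ with $u$ gives $\|u\|_{L^\infty}\le\|u_0\|_{L^\infty}$.

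The substance of the proof is the comparison principle, whose mechanism is a one-point maximum principle. If $w:=u-v$ has a spatial maximum at $x_0$ at time $t$, then $\n u(t,x_0)=\n v(t,x_0)$, so $\sqrt{1+|\n u|^2}$ and $\sqrt{1+|\n v|^2}$ agree at $x_0$; moreover $\ti u:=u(t,\cdot)-u(t,x_0)$ and $\ti v:=v(t,\cdot)-v(t,x_0)$ satisfy $\ti u\le\ti v$ with equality at $x_0$, hence $p_{\ti u}(x_0,\cdot)\le p_{\ti v}(x_0,\cdot)$ pointwise in the notation of \eqref{eq:P_u-G}; since $\cG$ is nonincreasing and $H$ is invariant under adding a constant, this gives $H(u)(t,x_0)=H(\ti u)(x_0)\ge H(\ti v)(x_0)=H(v)(t,x_0)$. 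Therefore $\de_tw(t,x_0)=\sqrt{1+|\n v(t,x_0)|^2}\,\big(H(v)-H(u)\big)(t,x_0)\le0$: at a maximum of $w$, $w$ cannot increase, which is the heuristic for the monotonicity of $\sup_x w(t,\cdot)$.

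The main obstacle is making this rigorous on the noncompact domain $\R^{N-1}$, where $w(t,\cdot)$ need not attain its supremum. I would argue by contradiction. Suppose $\sup_x w(t_1,\cdot)>\sup_x w(t_0,\cdot)$ for some $t_0<t_1$. Fix a small $\delta>0$ and, as $w$ is bounded, consider $m_{\delta,\epsilon}(t):=\sup_x\big(w(t,x)-\delta t-\epsilon\sqrt{1+|x|^2}\big)$; the weight $\epsilon\sqrt{1+|x|^2}$ is smooth — which matters, since it keeps the relevant competitor of class $C^{1+\b}$ near its maximum point, so that $H$ stays defined there — and coercive, so this supremum is attained at some $x_*=x_*(\delta,\epsilon,t)$. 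By the Lipschitz-in-$t$, uniform-in-$x$ bound from \eqref{eq:main-1-cor}, $m_{\delta,\epsilon}$ is Lipschitz, hence differentiable a.e., and for $\epsilon$ small one still has $m_{\delta,\epsilon}(t_1)>m_{\delta,\epsilon}(t_0)$; thus there is a differentiability point $t_*$ with $m_{\delta,\epsilon}'(t_*)>0$, forcing $\de_tw(t_*,x_*)>\delta$. On the other hand, at $x_*$ one has $|\n w(t_*,x_*)|\le\epsilon$ and $u(t_*,\cdot)-u(t_*,x_*)\le\ti v_\epsilon$, where $\ti v_\epsilon$ differs from $v(t_*,\cdot)-v(t_*,x_*)$ by a smooth function whose gradient and Hessian at $x_*$ are $O(1)$ uniformly in $x_*$; feeding this into the one-point argument — using that $\n(u(t_*,\cdot)-u(t_*,x_*))$ and $\n\ti v_\epsilon$ still agree at $x_*$, and the uniform $C^{1+\b}$-bounds on $u,v$ coming from \eqref{eq:main-1-cor} and $\n u_0\in C^{\g_\rho}(\R^{N-1})$ (so $H(u),H(v)$ are uniformly bounded and the $O(\epsilon)$ penalization changes $H(\cdot)(x_*)$ by $O(\epsilon)$ uniformly, the latter via the principal-value cancellation) — one obtains $\de_tw(t_*,x_*)\le C\epsilon$, a contradiction for $\epsilon$ small. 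The genuinely delicate points are exactly the two just flagged: keeping the penalized competitor regular enough for $H$ to be meaningful, and controlling all error terms uniformly in the possibly escaping maximizer $x_*$; the global estimates of Theorem~\ref{MainResult1} are what make both work.
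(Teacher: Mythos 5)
Your proof is correct and follows the same overall strategy as the paper's: reduce (i), (ii), (iii) to a comparison principle applied to spatial translates, time translates, and constants, respectively. Where you genuinely diverge from the paper is in how the comparison principle is formulated and proved. The paper never states a nonlinear comparison principle: it linearizes the equation satisfied by the difference quotient $\t_h u$ (resp.\ the time-difference quotient $u^\tau$) via the fundamental theorem of calculus, writing it as $\de_t\t_h u + B[u(\cdot,\cdot+h),u]\t_h u=0$ for the explicit integro-differential operator $B[w,v]$ of Lemma~\ref{lem:explicit-Bwv}, and then applies the \emph{linear} maximum principle of Proposition~\ref{prop:mmp} (via Corollary~\ref{cor:mmp-lnmc}), while (iii) is Proposition~\ref{prop:mmp} applied directly to $u$ and $-u$ via the kernel representation~\eqref{eq:exp-nmc-Bu}. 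Proposition~\ref{prop:mmp} handles noncompactness with a compactly supported cutoff $\eta_R$ plus the linear time penalty $-\e t$, so the commutator $\ov L(t)(\eta_R u)-\eta_R\ov L(t)u$ is $O(1/R)$ and there is no need to estimate how $H$ changes under a coercive spatial perturbation. You instead state a nonlinear comparison principle for two solutions with bounded difference, and localize the noncompact maximum with the coercive weight $\epsilon\sqrt{1+|x|^2}$; this puts the work on controlling $H(\ti v+\epsilon\psi_*)(x_*)-H(\ti v)(x_*)$ uniformly in the escaping maximizer $x_*$, which does go through using the form~\eqref{eq:exp-nmc-Bu}, the uniform $C^2$-bounds on $\psi_*$, and the uniform $C^{1+\b}$-bound from~\eqref{eq:main-1-cor}, but it is more delicate than the paper's cutoff estimate. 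Both localization devices are legitimate; the paper's factorization through the linear operator $B[w,v]$ is essentially your nonlinear one-point argument written as a line integral, and it has the mild advantage of reusing Proposition~\ref{prop:mmp} verbatim for all three items. A small bonus of your version is that in item (ii) you avoid invoking the $C^\infty$-smoothness of $u(t,\cdot)$ for $t>0$ from Theorem~\ref{MainResult1-part-2}, which the paper uses to justify working with $u^\tau$ on $[\e,T-\tau]$, relying instead only on $\de_t u\in C^\rho([0,T],C^{\b-\a})$ from~\eqref{eq:main-1-cor}.
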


As we shall see below, the existence  in  Theorem \ref{MainResult1}   is a consequence of a more general existence result concerning problem \eqref{eq:main-eq-s1-cor}. Indeed, we   define  first the Banach space   
$$
\cC^\theta_0(\R^{N-1}) =\ov{C^\infty_c(\mathbb{R}^{N-1})}^{\|\cdot \|_{C^\theta(\mathbb{R}^{N-1})}} \qquad\textrm{ for $\th\in \R_+\setminus \N$},
$$ 
endowed with the norm of $C^\theta(\mathbb{R}^{N-1})$.  We then write \eqref{eq:main-eq-s1-cor} as  
 \begin{align}
	\label{eq:main-eq-s112}
	\left\{
	\begin{array}{rll}
	\de_tu+ \cL_0u &=F(u)  &\hspace{.5cm}\mbox{in }\;[0,T]	\times\mathbb{R}^{N-1}\\
	u(0)&=u_0&\hspace{.5cm}\mbox{in }\;\mathbb{R}^{N-1},
	\end{array}
	\right.
	\end{align}
        where $\cL_0:=D\cH(u_0): \cC^{1+\b}_0(\mathbb{R}^{N-1})\to \cC^{\b-\a}_0(\mathbb{R}^{N-1})$ is the derivative of the weighted mean curvature operator
        \begin{align}\label{Weighted:MCoperator}
        u \mapsto \cH(u):={\sqrt{1+|\nabla u|^2}} H(u)
        \end{align}
        at the initial condition $u_0$ and the nonlinear map $u \mapsto F(u) = -\cH(u)+ \cL_0 u$ has the property that $F(u)-F(u_0)$ has superlinear growth in $u-u_0$. Our strategy is now to show that $\cL_0$ generates a strongly continuous analytic semigroup on $\cC^{\b-\a}_0(\mathbb{R}^{N-1})$ and to apply a fixed point argument.  However,  we need precise estimates, as we want all the regularity estimated by the H\"older norms of $\n u_0$.

The  main result from which we partly derive Theorem \ref{MainResult1} is  the following 

\begin{theorem}\label{MainResult}
	Let $\b\in (\a,1)$,   $\nu>0$, $\rho\in (0,\frac{1}{1+\a})$ and  $\g_\rho:=\b+\rho(1+\a)$.	Then, there exists $T>0$,   depending only on $\rho,\a,\b,N$ and $\nu$, such that for all      $u_0\in \cC^{1+\b}_0(\mathbb{R}^{N-1})$,  with 
		$$
	\|\n u_0 \|_{C^{\g_\rho}(\R^{N-1})} \leq \nu,
	$$
%
	 the initial value problem 
	\begin{align}\label{CauchyProblem}
	\left\{
	\begin{array}{rll}
	\de_t u+{\sqrt{1+|\nabla u|^2}} H(u)&=0&\hspace{.5cm}\mbox{in }\;[0,T]\times\mathbb{R}^{N-1}	\\
	u(0)&=u_0&\hspace{.5cm}\mbox{in }\;\mathbb{R}^{N-1}
	\end{array}
	\right.
	\end{align}
	admits a unique solution $u\in  C^{1+\rho}([0,T], \cC^{\b-\a}_0(\mathbb{R}^{N-1}))\cap C^{\rho}([0,T],\cC^{1+\b}_0(\mathbb{R}^{N-1}))$.  Moreover,  there exists $C_0>0$,   depending only on $\rho,\a,\b,N$ and $\nu$, such that 
\be \label{eq:estim-th3}
  \| u-u_0\|_{C^{1+\rho}([0,T], C^{\b-\a}(\mathbb{R}^{N-1}))\cap C^{\rho}([0,T],C^{1+\b}(\mathbb{R}^{N-1}))}   \leq C_0.
\ee
\end{theorem}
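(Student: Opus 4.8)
The plan is to recast \eqref{CauchyProblem} in the abstract semilinear form \eqref{eq:main-eq-s112} and to solve it by a contraction argument in the maximal regularity class
$$
\cE_T:=C^{1+\rho}\big([0,T],\cC^{\b-\a}_0(\R^{N-1})\big)\cap C^{\rho}\big([0,T],\cC^{1+\b}_0(\R^{N-1})\big),
$$
using analytic semigroup theory. I set $X_0:=\cC^{\b-\a}_0(\R^{N-1})$, $X_1:=\cC^{1+\b}_0(\R^{N-1})$, and I let $X_\theta$, $\theta\in(0,1)$, be the real interpolation spaces $(X_0,X_1)_{\theta,\infty}$, which in this scale coincide, with equivalent norms uniformly in $u_0$, with the Hölder spaces $C^{(\b-\a)+\theta(1+\a)}(\R^{N-1})$; in particular $X_\rho\cong C^{\g_\rho-\a}(\R^{N-1})$. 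The weighted mean curvature operator \eqref{Weighted:MCoperator} is of order $1+\a$, and differentiating it under the principal value integral gives its linearization at $u_0$,
$$
\cL_0\varphi=\sqrt{1+|\n u_0|^2}\,DH(u_0)\varphi+\frac{\n u_0\cdot\n\varphi}{\sqrt{1+|\n u_0|^2}}\,H(u_0),
$$
where
$$
DH(u_0)\varphi(x)=\mathrm{P.V.}\int_{\R^{N-1}}\frac{\cG'(p_{u_0}(x,y))\,(\varphi(y)-\varphi(x))}{|x-y|^{N+\a}}\,dy.
$$
A point worth recording at the outset is that $\cH$, $\cL_0$ and $F$ are unchanged when a constant is added to their argument, so they depend on $u_0$ only through $\n u_0$ (indeed only through $p_{u_0}$, which satisfies $|p_{u_0}|\le\|\n u_0\|_\infty$); hence no bound on $\|u_0\|_\infty$ is ever needed, and all operator estimates below will depend on $u_0$ only through $\|\n u_0\|_{C^{\g_\rho}}\le\nu$. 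This is what will make $T$ and $C_0$ depend only on $\rho,\a,\b,N$ and $\nu$.

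The first and principal step — the one I expect to be the main obstacle — is to prove that $-\cL_0$ generates a strongly continuous analytic semigroup $(e^{-t\cL_0})_{t\ge0}$ on $X_0$ with domain $X_1$, the sectoriality constants depending only on $N,\a,\b$ and $\nu$. Here the genuinely nonlocal nature of the problem bites: $\cL_0$ is an integro-differential operator of order $1+\a$ whose leading part carries a $C^{\g_\rho}$ coefficient and a $C^{\g_\rho}$ kernel, and it is not covered by the operator classes of \cite{Serra,ScSi}. I would obtain the generation property from dedicated Schauder-type resolvent estimates $\|\varphi\|_{X_1}\le C\big(\|(\cL_0-\lambda)\varphi\|_{X_0}+\|\varphi\|_{X_0}\big)$ for $\lambda$ in a sector of the plane, proved by freezing coefficients and localizing: near a point $x_0$ one compares $\cL_0$ with the translation-invariant model operator $\sqrt{1+|\n u_0(x_0)|^2}\,(-\D)^{(1+\a)/2}$, whose resolvent is explicit, up to an order-$1$ term and a remainder small in operator norm on a small ball (because coefficient and kernel oscillate little there, by their $C^{\g_\rho}$ bound), and then patches the local estimates with a covering argument; the $C^{\g_\rho}$ control keeps all constants uniform in $u_0$ through $\nu$. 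The little Hölder spaces $\cC^\theta_0$ are used precisely so that the semigroup is strongly continuous.

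Granting this, I invoke the classical maximal $C^\rho$-regularity theory for analytic semigroups: for $g\in C^{\rho}([0,T],X_0)$ with $g(0)\in X_\rho$, the problem $\de_t v+\cL_0 v=g$, $v(0)=0$, has a unique solution $v=\Phi(g)\in\cE_T$ with $\|\Phi(g)\|_{\cE_T}\le C\big(\|g\|_{C^{\rho}([0,T],X_0)}+\|g(0)\|_{X_\rho}\big)$, the constant $C$ depending only on $N,\a,\b,\rho,\nu$ and uniform for $T$ in a fixed bounded range. I then look for the solution in the form $u=\bar u+v$ with $\bar u(t):=e^{-t\cL_0}u_0$. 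Since $\|\n u_0\|_{C^{\g_\rho}}\le\nu$ and $\cH$ is constant-invariant, $\cL_0 u_0\in X_\rho$ with $\|\cL_0 u_0\|_{X_\rho}\le C(\nu)$; writing $\bar u-u_0=\Phi(-\cL_0 u_0)$ (where $-\cL_0 u_0$ is the $X_\rho$-valued constant path) shows $\bar u\in\cE_T$ with $\|\bar u-u_0\|_{\cE_T}\le C(\nu)$, uniformly in $T$, and moreover $\bar u(t)\to u_0$ in $X_1$ as $t\to0$. The correction must then solve the fixed point equation $v=\Psi(v):=\Phi\big(s\mapsto F(\bar u(s)+v(s))\big)$, and I run the Banach fixed point theorem on $B_R:=\{v\in\cE_T:\ v(0)=0,\ \|v\|_{\cE_T}\le R\}$ with $R:=2C\|F(u_0)\|_{X_\rho}\le C(\nu)$.

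The contraction rests on the two structural properties of $F(u)=-\cH(u)+\cL_0 u$. First, $F(u_0)=-\cH(u_0)+\cL_0 u_0\in X_\rho$ with $\|F(u_0)\|_{X_\rho}\le C(\nu)$ — both $H$ and $\cL_0$ being of order $1+\a$ and $\|\n u_0\|_{C^{\g_\rho}}\le\nu$ — so the compatibility condition $F(\bar u(0)+v(0))=F(u_0)\in X_\rho$ holds for every admissible $v$; this is exactly what the regularity hypothesis on $u_0$ buys. Second, $F(u)-F(u_0)=-\big(\cH(u)-\cH(u_0)-D\cH(u_0)(u-u_0)\big)$ is the second-order Taylor remainder of $u\mapsto\cH(u)$. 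Once one checks — by differentiating twice under the principal value integral and using the smoothness and boundedness of $\cG$ — that $\cH$ is of class $C^2$ from a $\nu$-ball of $\cC^{1+\b}_0$ into $\cC^{\b-\a}_0$ with bounds depending only on $\nu$, this gives $\|F(w_1)-F(w_2)\|_{X_0}\le C(\nu)\big(\max_i\|w_i-u_0\|_{X_1}\big)\|w_1-w_2\|_{X_1}$ and, after composing with $\rho$-Hölder paths, $\|F(w_1(\cdot))-F(w_2(\cdot))\|_{C^{\rho}([0,T],X_0)}\le C(\nu)\,\mu(T)\,\|w_1-w_2\|_{\cE_T}$ for $w_i=\bar u+v_i$ with $v_i\in B_R$, where $\mu(T)\to0$ as $T\to0$ (since then $\sup_{[0,T]}\|\bar u(s)+v_i(s)-u_0\|_{X_1}\to0$ by strong continuity of $\bar u$ and $v_i(0)=0$, the Hölder-in-time seminorms of $\bar u$ and $v_i$ staying bounded while $\|v_1-v_2\|_{C([0,T],X_1)}\le T^{\rho}\|v_1-v_2\|_{\cE_T}$). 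Since $[F(\bar u+v_1)-F(\bar u+v_2)](0)=F(u_0)-F(u_0)=0$, the $X_\rho$-term in the estimate for $\Phi$ drops out, so $\|\Psi(v_1)-\Psi(v_2)\|_{\cE_T}\le C(\nu)\mu(T)\|v_1-v_2\|_{\cE_T}\le\frac12\|v_1-v_2\|_{\cE_T}$ once $T\le T_0(\rho,\a,\b,N,\nu)$; the analogous one-point bound $\|\Psi(v)\|_{\cE_T}\le C(\nu)\|F(u_0)\|_{X_\rho}+C(\nu)\mu(T)\le R$ gives $\Psi(B_R)\subset B_R$ after shrinking $T_0$. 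Banach's theorem then yields a unique fixed point $v$, hence a unique solution $u=\bar u+v\in\cE_T$ of \eqref{CauchyProblem}, and $\|u-u_0\|_{\cE_T}\le\|\bar u-u_0\|_{\cE_T}+R\le C_0(\rho,\a,\b,N,\nu)$, which is \eqref{eq:estim-th3}. Uniqueness in the full class $\cE_T$ follows by applying the same Lipschitz estimate to the difference of any two solutions on a short subinterval (the $X_\rho$-term again vanishing because both solutions equal $u_0$ at $t=0$), together with a connectedness argument in time.
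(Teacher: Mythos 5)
Your overall strategy -- recast \eqref{CauchyProblem} as the abstract semilinear problem \eqref{eq:main-eq-s112}, show $\cL_0=D\cH(u_0)$ generates an analytic semigroup on $\cC^{\b-\a}_0$, identify the interpolation space $\calD_{\cL_0}(\rho,\infty)$ with the H\"older scale, and close a Banach fixed point using the quadratic Taylor-remainder structure of $F$ together with $T^\rho$-smallness -- is exactly the paper's strategy. The constant-invariance observation, the compatibility condition in $X_\rho$, and the contraction estimate via second-order smoothness of $\cH$ all line up with what the paper does (Proposition~\ref{prop:car-inerpol-space}, Corollaries~\ref{lem:F-smoothXY},~\ref{corr:SemigropeCorollary}, Lemma~\ref{Lemma001}). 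Parametrizing the fixed point as $u=\bar u+v$ with $v(0)=0$ instead of iterating directly on $u$ with $u(0)=u_0$ is cosmetic (it just shifts $-\cH(u_0)$ to $F(u_0)=-\cH(u_0)+\cL_0 u_0$, and $\cL_0 u_0\in X_\rho$ by the same mapping property), as is your slightly looser statement of the interpolation identification.

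The one genuine gap is in the step you yourself flag as the main obstacle. Your frozen-coefficient model at $x_0$ is $\sqrt{1+|\n u_0(x_0)|^2}\,(-\D)^{(1+\a)/2}$, and you argue that the remainder is small in operator norm on small balls because coefficient and kernel oscillate little. This fails: the kernel of the principal part $Q(u_0)\widetilde L_1$ of $\cL_0$ is, in polar coordinates $y=x-r\th$,
$$
\sqrt{1+|\n u_0(x)|^2}\;\frac{2\bigl(1+p_{u_0}(x,x-r\th)^2\bigr)^{-\frac{N+\a}{2}}}{r^{N+\a}},
\qquad p_{u_0}(x,x-r\th)\to -\n u_0(x)\cdot\th \ \text{as}\ r\to0,
$$
so the frozen model at $x_0$ is the \emph{anisotropic} translation-invariant operator with kernel $2\sqrt{1+|\n u_0(x_0)|^2}\bigl(1+(\n u_0(x_0)\cdot\th)^2\bigr)^{-\frac{N+\a}{2}}r^{-N-\a}$, not a scalar multiple of the fractional Laplacian. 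The angular factor $(1+(\n u_0(x_0)\cdot\th)^2)^{-\frac{N+\a}{2}}$ is an $O(1)$ deviation from a constant whenever $\n u_0(x_0)\neq 0$; it is not made small by shrinking the ball, so the difference between $\cL_0$ and your model near $x_0$ is a full-order $(1+\a)$-th order operator, not a perturbation. You would instead need resolvent/Schauder estimates for the whole family of anisotropic stable-like model operators, uniform in the anisotropy parameter $\n u_0(x_0)$, and for the $x$-dependent version. This is precisely what the paper obtains by the decomposition $\cL_0 = Q(u_0)\widetilde L_1 + \widetilde L_2$ followed by the further split $b\,\mbL_K=L_1+L_2$ in Proposition~\ref{lem:mbbL_analytic}, where $L_1$ carries the $r=0$ anisotropic kernel $b(x)A^e_K(x,0,\th)|y|^{-N-\a}$ and generation is supplied by \cite[Cor.~2.17]{abels2009cauchy}, while $L_2$ is a relatively small perturbation handled through Lemma~\ref{SemigroupLemma001} and interpolation. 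So the correct replacement for your step is: freeze at $r=0$ (keeping the angle dependence), invoke the Abels--Kassmann generation theorem for non-smooth anisotropic stable-like operators, and absorb the rest as a lower-order perturbation.
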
	
We note that, thanks to the regularity of $E_{u}(t)$ proved in Theorem~\ref{MainResult}, the uniqueness of the flow $(E_{u}(t))_{t\in [0,T]}$ also follows from \cite{saez2019evolution}, where the authors proved that \eqref{GeneralFormulation} admits a unique   classical solution.  
On the other hand, the proof of our  Theorem~\ref{MainResult} yields existence and uniqueness simultaneously.

Recall that, provided the linear operator  $\cL_0:=D\cH(u_0): \cC^{1+\b}_0(\mathbb{R}^{N-1}) \to \cC^{\b-\a}_0 (\mathbb{R}^{N-1})$ generates an analytic semigroup,  to get optimal $C^\rho([0,T],\cC^{1+\b}_0(\mathbb{R}^{N-1}))$-regularity in time for the problem \eqref{eq:main-eq-s112},  it is necessary and sufficient that  $u_0$ satisfies $F(u_0)-\cL_0u_0=-\cH(u_0)\in \mathcal{D}_{\cL_0}(\rho,\infty)$. Here $\mathcal{D}_{\cL_0}(\rho,\infty)$  is a real interpolation space between the space $\cC^{\b-\a}_0(\mathbb{R}^{N-1})$ and the domain $  \cC^{1+\b}_0(\mathbb{R}^{N-1})$. We refer the reader to    Lunardi \cite{lunardi2012analytic} and Sinestrari \cite{sinestrari1985abstract} on this type of optimal regularity. \\
We shall show here that  $\mathcal{D}_{\cL_0}(\rho,\infty)=C^{\b+\rho(1+\a)-\a}(\mathbb{R}^{N-1})\cap  \cC^{\b-\a}_0(\mathbb{R}^{N-1})$ for $\rho\not= \frac{1+\a-\b}{1+\a}$  (see Proposition \ref{prop:car-inerpol-space} below).   In addition, we will prove that $\cH(u_0) \in \calD_{\cL_0}(\rho,\infty)$ if and only if  $\n u_0\in C^{\b+\rho(1+\a)}(\R^{N-1})$.\\
%
%
%




		The proof of  Theorem \ref{MainResult} is given in Section \ref{Section5} below. Main steps in this proof consist in establishing suitable regularity properties of the fractional mean curvature operator $H$ in H\"older spaces and in showing that the  operator $ D\cH(u_0):\cC^{1+\b}_0(\mathbb{R}^{N-1}) \subset \cC^{\b-\a}_0(\mathbb{R}^{N-1}) \to \cC^{\b-\a}_0(\mathbb{R}^{N-1})$ is the generator of a strongly continuous analytic semigroup. We shall derive the generation of a strongly analytic semigroup by combining a result from \cite{abels2009cauchy} for a class of non-smooth integro-differential operators defined $\cC^{1+\b}_0(\mathbb{R}^{N-1})$ with H\"older estimates for integral operators with anisotropic kernels represented suitably in polar coordinates. To obtain existence and uniqueness for the non homogeneous  Cauchy problem \eqref{eq:main-eq-s112}, we also need a characterization of   the real interpolation space $\mathcal{D}_{\cL_0}(\rho,\infty)$, see Section \ref{ss:general-nonloc}.
Once Theorem~\ref{MainResult}  is established, we derive Theorem~\ref{MainResult1} from it by an approximation argument. The uniqueness in Theorem~\ref{MainResult1} and the universal estimates in Theorem \ref{th:mt-univ-est}  are based on a suitable version of the comparison principle, which we also prove. In fact, our comparison principle yields further interesting qualitative properties of the flow such as preservation of positivity of $u_0$  and $H(u_0)$. \\

The paper is organized as follows. In Section \ref{SectionNotation},  we introduce   the function spaces we will use in this paper and we prove some preliminary results. Section \ref{Section3}  is devoted to the strongly continuous analytic semigroup generated by a general class of nonlocal operators. Section\ref{Section2} deals with the regularity properties  satisfied by  the weighted  nonlocal mean curvature operator  $\cH(u)$  and its linearization. Finally,  in   Section \ref{Section5}, we prove our  main results.




	\bigskip
	
	\noindent
	\textbf{ Acknowledgments} \\
 This work is supported by the Alexander von Humboldt foundation and the    German Academic
Exchange Service (DAAD).  The authors would like to thank Sven Jarohs and Esther Cabezas-Rivas for many fruitful discussions. 
	\section{Notation and preliminary estimates}\label{SectionNotation}
	%
	%
	%
For $m\in\mathbb{N} $, we denote by   $C_b^m(\mathbb{R}^{N-1})$     the space of   $m$-times bounded continuously differentiable functions  endowed  with the norm
	\begin{align*}
	\|u\|_{C_b^m }=\sum_{k=0}^m	\|D^ku\|_{L^\infty }:=\sum_{k=0}^m\sup_{x\in \R^{N-1}}|D^ku(x)|.
	\end{align*}
	For $0<\gamma<1$, we consider  the  space  of  H\"older continuous functions  
	\begin{align}
	C^{\gamma} (\mathbb{R}^{N-1})=\bigg\{u\in C_b(\mathbb{R}^{N-1}):\,[u]_{C^{\gamma } }:=\sup_{\stackrel{x,y\in\mathbb{R}^{N-1}}{x\neq y}}\frac{|u(x)-u(y)|}{|x-y|^{\gamma}}<\infty\bigg\}
	\end{align}
	endowed with the norm
	\begin{align*}
	u \mapsto \|u\|_{C^{\gamma }  }=	\|u\|_{L^\infty  }+[u]_{C^{\gamma }  }.
	\end{align*}
	More generally, for $m \in \N$ and $0<\gamma <1$, we let
        $$
        C^{m+\gamma} (\mathbb{R}^{N-1})=\{u\in C_b^m(\mathbb{R}^{N-1}):D^mu\in C^{\gamma}(\mathbb{R}^{N-1})\},
        $$
        endowed with the norm
	$$
        u \mapsto
        \|u\|_{C^{m+\gamma}  }=\|u\|_{C_b^m }+[D^mu]_{C^{\gamma }  }.
        $$
        For late use, we recall the  interpolation inequality that, for all $\g\in (0,1)$, $\g'\in[0,\g)$ and   for all $\e>0$, there exists $C_\e>0$ such that 
\be\label{eq:interpol}
\|\n v\|_{C^{\g'}} \leq \e \|\n v \|_{C^\g}+ C_\e \| v\|_{L^{\infty}} \qquad\textrm{ for all $v\in C^{1,\g}(\R^{N-1})$.}
\ee
	Next, we define 
	\be\label{eq:defcalCs0}
\cC^{m+\g}_0(\mathbb{R}^{N-1}) :	=\ov{ C^\infty_c(\R^{N-1}) }^{\|\cdot\|_{C^{m+\g}}}
	\ee
	endowed with the  $C^{m+\g}$-norm. We then recall  the following result.
	\begin{proposition}\cite[Proposition 2.1]{abels2009cauchy}\label{prop:car-C_0}
	Let $\b,\g\in (0,1)$, with $\g>\b$. Let $u\in C^{\g}(\R^{N-1})$ be such that $\lim_{R\to \infty} \|f\|_{C^{\b}(\R^{N-1}\setminus B_R)}=0$. Then $f\in \cC^\b_0 (\mathbb{R}^{N-1})$.
	\end{proposition}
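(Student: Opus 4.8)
The hypothesis evidently contains a typo: the function is called $u$ in the first sentence and $f$ afterwards, so below I write $f$ throughout (with $f=u$). The plan is to verify $f\in\cC^\b_0(\R^{N-1})$ directly from the definition \eqref{eq:defcalCs0}, by producing for every $\epsilon>0$ a function $g\in C^\infty_c(\R^{N-1})$ with $\|f-g\|_{C^\b(\R^{N-1})}<\epsilon$; this $g$ will be a mollified cut-off of $f$. I would first record two elementary tools. The first is the interpolation inequality for H\"older seminorms: for every $h\in C_b(\R^{N-1})$,
\be
[h]_{C^\b}\leq 2\,\|h\|_{L^\infty}^{1-\b/\g}\,[h]_{C^\g}^{\b/\g}.
\ee
This follows by bounding $|h(x)-h(y)|\leq\min\{2\|h\|_{L^\infty},[h]_{C^\g}|x-y|^\g\}$ and optimizing in $|x-y|$ (here $\g>\b$). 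The second is the submultiplicativity $[vw]_{C^\b(\O)}\leq\|v\|_{L^\infty(\O)}[w]_{C^\b(\O)}+\|w\|_{L^\infty(\O)}[v]_{C^\b(\O)}$ valid on an arbitrary set $\O\subset\R^{N-1}$; in particular $\chi f\in C^\g(\R^{N-1})$ whenever $\chi\in C^\infty_c(\R^{N-1})$.

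For the cut-off step, fix $\chi\in C^\infty_c(\R^{N-1})$ with $0\leq\chi\leq1$, $\chi\equiv1$ on $B_1$, $\chi\equiv0$ off $B_2$, and set $\chi_R:=\chi(\cdot/R)$ for $R\geq1$, so that $\chi_Rf\in C^\g(\R^{N-1})$ and $\supp(\chi_Rf)\subset B_{2R}$. I claim $\|f-\chi_Rf\|_{C^\b(\R^{N-1})}\leq C\|f\|_{C^\b(\R^{N-1}\setminus B_R)}$ with $C$ independent of $R\geq1$. The $L^\infty$ part is immediate because $h:=(1-\chi_R)f$ is supported in $\R^{N-1}\setminus B_R$. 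For the seminorm, the key observation is that $h$ vanishes on the convex set $\ov{B_R}$, whence $[h]_{C^\b(\R^{N-1})}=[h]_{C^\b(\R^{N-1}\setminus B_R)}$: indeed, if $x\in\ov{B_R}$ and $y\notin\ov{B_R}$, replacing $x$ by the point where the segment $[x,y]$ exits $\ov{B_R}$ (which lies in $\R^{N-1}\setminus B_R$ and is at distance $\leq|x-y|$ from $y$) does not decrease the difference quotient. Then the submultiplicativity above, together with the scaling bound $[\chi_R]_{C^\b(\R^{N-1})}\leq CR^{-\b}$ (obtained by splitting into $|x-y|\leq R$ and $|x-y|>R$), yields the claim. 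By the decay hypothesis, $\|f-\chi_Rf\|_{C^\b(\R^{N-1})}\to0$ as $R\to\infty$; note this uses both $\|f\|_{L^\infty(\R^{N-1}\setminus B_R)}\to0$ and $[f]_{C^\b(\R^{N-1}\setminus B_R)}\to0$.

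For the mollification step, since $\chi_Rf\in C^\g(\R^{N-1})$ has compact support, the mollifications $g_\delta:=(\chi_Rf)*\eta_\delta$ belong to $C^\infty_c(\R^{N-1})$ and $g_\delta\to\chi_Rf$ in $C^\b(\R^{N-1})$ as $\delta\to0$: one has $\|g_\delta-\chi_Rf\|_{L^\infty}\leq\delta^\g[\chi_Rf]_{C^\g}\to0$, while $[g_\delta-\chi_Rf]_{C^\g}\leq2[\chi_Rf]_{C^\g}$ stays bounded, so the interpolation inequality forces $[g_\delta-\chi_Rf]_{C^\b}\to0$. Combining: given $\epsilon>0$, use the previous step to pick $R\geq1$ with $\|f-\chi_Rf\|_{C^\b(\R^{N-1})}<\epsilon/2$, then pick $\delta>0$ with $\|g_\delta-\chi_Rf\|_{C^\b(\R^{N-1})}<\epsilon/2$; then $g:=g_\delta\in C^\infty_c(\R^{N-1})$ satisfies $\|f-g\|_{C^\b(\R^{N-1})}<\epsilon$, and hence $f\in\cC^\b_0(\R^{N-1})$.

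I expect the only genuinely delicate point to be the $C^\g\to C^\b$ convergence of the mollifications: mollification does not converge in $C^\g$, so one must pass through the $L^\infty$-estimate and the interpolation inequality, which is exactly where the hypothesis $\g>\b$ is used. The cut-off estimate is routine once one notices the convexity trick that localizes the $C^\b$-seminorm of $(1-\chi_R)f$ to the region $\R^{N-1}\setminus B_R$, and the scaling $[\chi_R]_{C^\b}\leq CR^{-\b}$ is a direct computation.
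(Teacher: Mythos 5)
Your proof is correct, and it follows the standard cut-off-then-mollify strategy for showing membership in a ``little H\"older'' space $\cC^\b_0$. Note, however, that the paper does not prove this statement itself; it imports it verbatim as \cite[Proposition 2.1]{abels2009cauchy}, so there is no in-paper argument to compare against. What you wrote is a self-contained reconstruction.

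A few remarks on the details, all of which check out. The convexity observation that $[(1-\chi_R)f]_{C^\beta(\R^{N-1})}=[(1-\chi_R)f]_{C^\beta(\R^{N-1}\setminus B_R)}$ is the cleanest way to localize the seminorm estimate and correctly exploits that $(1-\chi_R)f$ vanishes on the closed convex set $\overline{B_R}$; combined with the scaling $[\chi_R]_{C^\beta}\le CR^{-\beta}$ and the product rule, this gives the cut-off estimate with a constant uniform in $R\ge 1$, so the hypothesis $\|f\|_{C^\beta(\R^{N-1}\setminus B_R)}\to 0$ is used in exactly the right way. The mollification step is also the delicate one, as you flagged: mollification does not converge in $C^\gamma$, so one must pass through $\|g_\delta-\chi_Rf\|_{L^\infty}\le\delta^\gamma[\chi_Rf]_{C^\gamma}$ and the uniform bound $[g_\delta-\chi_Rf]_{C^\gamma}\le 2[\chi_Rf]_{C^\gamma}$, then interpolate; this is precisely where $\gamma>\beta$ enters. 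The only cosmetic point worth making explicit is that $g_\delta$ has compact support because $\chi_Rf$ does (with $\supp g_\delta\subset B_{2R+\delta}$), which you implicitly use to conclude $g_\delta\in C^\infty_c(\R^{N-1})$.
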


We need some more notation related to space of time dependent functions.  Let $\cX$ be a Banach space. For $T>0$, we consider the Banach space  $L^\infty([0,T],\cX)$, consisting  of  bounded    functions $u:[0,T]\rightarrow \cX.$   The spaces $L^\infty([0,T],\cX)$ is endowed with the  norm $$\|u\|_{L^\infty([0,T],\cX)}=\sup_{t\in [0,T]}\|u(t)\|_\cX.$$
	For  $\mu\in (0,1)$, we define  the Banach space
	$$C^{1+\mu}([0,T],\cX)=\left\{u\in C^1([0,T],\cX)\, :\,[\de_t u]_{C^\mu([0,T],\cX)}=\sup_{ 0\leq s<t\leq T}\frac{\|\de_t u(t)-\de_t u(s)\|_\cX}{|t-s|^\mu}<+\infty \right\},$$
	endowed with the norm
	$\|u\|_{C^{1+\mu}([0,T],\cX)}=\|u\|_{L^\infty([0,T],\cX)}+[\de_t u]_{C^\mu([0,T],\cX)}.$

\subsection{Estimates in polar coordinates}
\label{sec:estim-polar-coord}

In our study of linearizations of the nonlocal mean curvature operator, we will have to consider integral operators whose associated kernels have anisotropic singularities which can be resolved partially in polar coordinates. For this, the following basic estimates will be of key importance.

\begin{lemma}\label{SemigroupLemma001}
Let $\g\in (\a,1)$, $v\in C^{1+\gamma}_{loc}(\R^{N-1})$ be such that $\n v\in L^\infty(\R^{N-1}) $. Let  $\mu\in C^{\g-\a}(\R^{N-1}\times [0,\infty)\times S^{N-2})$ and  $\nu\in C^{\g}(\R^{N-1}\times [0,\infty)\times S^{N-2})$ with  $\nu (\cdot,0,\cdot)=0$. We define
$$
I_ev(x):=\int_{S^{N-2}}\int_0^\infty \delta_e v(x,r,\theta)  \mu (x,r,\theta)r^{-2-\a }drd\theta
$$
and 
$$
I_ov(x):=\int_{S^{N-2}}\int_0^\infty \delta_o v(x,r,\theta)  \nu (x,r,\theta)r^{-2-\a }drd\theta
$$
for $x \in \R^{N-1}$, where
\be \label{eq:def-deeo}
		\delta_ev(x,r,\theta):=\frac{1}{2}(2v(x)-v(x+r\theta)-v(x-r\theta)) \quad\textrm{ and }\quad	\delta_ov(x,r,\theta):= v(x)-v(x-r\theta).
\ee
Then the functions $I_ev$ and $I_ov$ satisfy the following estimates with constants $C$ independent of $\mu, \nu$ and $v$.
\begin{itemize}
\item[(i)]   We have 
\be \label{eq:Io}
\|I_ov\|_{C^{\g-\a}}\leq C \| \n v\|_{L^\infty}\|\nu \|_{ C^{\g}(\R^{N-1}\times [0,\infty)\times S^{N-2}) }
\ee
and if $\n v \in C^\g(\R^{N-1})$,  then  
\be \label{eq:Ie}
\|I_ev\|_{C^{\g-\a}}\leq C \|\n v\|_{C^{\g}}\|\mu \|_{ C^{\g-\a}(\R^{N-1}\times [0,\infty)\times S^{N-2}) }.
\ee

\item[(ii)] If $\n v \in C^\g(\R^{N-1})$ and  $\n v$ is compactly supported,  then
		\be\label{eq:tozero-}
	\lim_{R\to\infty}   \|I_ev\|_{C ^{\gamma-\alpha}(\R^{N-1}\setminus B_R) }=	\lim_{R\to\infty}   \|I_o v\|_{C ^{\gamma-\alpha}(\R^{N-1}\setminus B_R) }=0.
		\ee
\item[(iii)] If  $v\in \cC^{1+\b}_0(\mathbb{R}^{N-1}) $, with $\b\in (\a,\g)$,  then $I_ev, I_ov\in {\cC^{\b-\a}_0}(\mathbb{R}^{N-1}) $.  	
\end{itemize}
\end{lemma}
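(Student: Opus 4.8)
The plan is to prove the three assertions about $I_ev$ and $I_ov$ by reducing everything to a master estimate for integral operators of the form $v\mapsto \int_{S^{N-2}}\int_0^\infty \Delta v(x,r,\theta)\,k(x,r,\theta)\,r^{-2-\a}\,dr\,d\theta$, where $\Delta v$ is either $\delta_e v$ or $\delta_o v$ and $k$ is Hölder in all variables. The key point is to exploit the cancellation encoded in the second-order difference $\delta_e v$ (which gains two orders, $|\delta_e v(x,r,\theta)|\lesssim \min\{r,r^2\}\|v\|_{C^{1+\g}}$, matched against $r^{-2-\a}$) and the first-order difference $\delta_o v$ together with the vanishing $\nu(\cdot,0,\cdot)=0$ (so $|\nu(x,r,\theta)|\lesssim r^{\g}\|\nu\|_{C^\g}$, and $|\delta_o v|\lesssim r\|\n v\|_{L^\infty}$, giving the integrable power $r^{\g-1-\a}$ near $0$ and $r^{-1-\a}$ near $\infty$). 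I would split each radial integral at $r=1$ (or at $r=|x-y|$ when estimating Hölder differences) and treat the inner and outer regions separately; near-origin convergence uses the difference/cancellation, and the tail uses $\|\n v\|_{L^\infty}$ and boundedness of $k$.

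For part (i), the $L^\infty$ bounds on $I_o v$ and $I_e v$ follow directly from the above splitting. For the Hölder seminorm $[I_\bullet v]_{C^{\g-\a}}$, given $x_1,x_2$ with $h:=|x_1-x_2|$, I would write $I_\bullet v(x_1)-I_\bullet v(x_2)$ as a sum of (a) the contribution of $r\le h$, where one estimates each term crudely but the small-$r$ gain supplies the factor $h^{\g-\a}$; (b) the contribution of $r>h$, where one uses the mean value / difference of $\delta_\bullet v$ and of $k$ in the $x$-variable, each contributing a factor $h$ times an integrable (in $r>h$) power that produces $h^{\g-\a}$ after combining with $h$. The Hölder regularity of $\mu$ (resp.\ $\nu$) in $x$ at exponent $\g-\a$ (resp.\ $\g$) is exactly what is needed so that the kernel-difference terms close at the right exponent; this is the source of the norm $\|\mu\|_{C^{\g-\a}}$ (resp.\ $\|\nu\|_{C^\g}$, combined with $\|\n v\|_{L^\infty}$) appearing on the right-hand side. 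All constants are manifestly independent of $v,\mu,\nu$ since they only come from dimensional integrals of powers of $r$.

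For part (ii), assume $\n v\in C^\g$ with $\supp \n v\subset B_{R_0}$. For $|x|\ge R$ with $R$ large, in $\delta_e v(x,r,\theta)$ and $\delta_o v(x,r,\theta)$ the relevant increments of $v$ vanish unless $r\gtrsim |x|-R_0$, so the radial integral is effectively over $r\ge c|x|$, on which $\int r^{-1-\a}\,dr\lesssim |x|^{-\a}\to 0$; a parallel computation for the Hölder quotient over $\R^{N-1}\setminus B_R$ (splitting $h\le 1$ vs.\ $h>1$, and using that one of the two points is far from $\supp\n v$) gives decay, yielding $\|I_\bullet v\|_{C^{\g-\a}(\R^{N-1}\setminus B_R)}\to 0$. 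Finally, part (iii): given $v\in \cC^{1+\b}_0$ with $\b\in(\a,\g)$, pick $v_n\in C^\infty_c$ with $v_n\to v$ in $C^{1+\b}$. By (ii) applied with exponent $\b$ in place of $\g$ (the hypotheses of the lemma hold with $\g$ replaced by any $\b\in(\a,\g)$ since $\mu\in C^{\g-\a}\subset C^{\b-\a}$ and $\nu\in C^\g\subset C^\b$), each $I_\bullet v_n$ satisfies $\lim_{R\to\infty}\|I_\bullet v_n\|_{C^{\b-\a}(\R^{N-1}\setminus B_R)}=0$; moreover $\n v_n\to\n v$ in $C^\b\hookrightarrow C^{\b-\a}$ bounds, and by the linear estimates of part (i) (again with $\b$ in place of $\g$) $I_\bullet v_n\to I_\bullet v$ in $C^{\b-\a}$. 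Hence $I_\bullet v\in C^{\g-\a}(\R^{N-1})$ satisfies the decay hypothesis of Proposition~\ref{prop:car-C_0} at exponent $\b$ (with $\g-\a>\b-\a$), so $I_\bullet v\in\cC^{\b-\a}_0(\R^{N-1})$.

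The main obstacle I anticipate is the bookkeeping in the Hölder-difference estimate of part (i): one must carefully track which factor ($\delta_\bullet v$ in the $x$-variable, or $k$ in the $x$-variable, or the small-$r$ cancellation) supplies the decisive power of $h=|x_1-x_2|$ in each of the regions $r\le h$ and $r>h$, and check that after integrating in $r$ the exponents assemble to exactly $h^{\g-\a}$ rather than something worse, uniformly in all parameters. This is also where the precise loss of $\a$ derivatives (from $r^{-2-\a}$ integrated against a second-order difference) manifests, and it is essential for matching the target spaces $C^{\g-\a}$ and ultimately $\cC^{\b-\a}_0$.
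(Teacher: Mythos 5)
Your approach is essentially the same as the paper's: reduce to pointwise cancellation estimates for $\delta_e v$ and $\delta_o v$, split the radial integral at $r=|x_1-x_2|$ for the H\"older seminorm, use compact support to get decay in (ii), and for (iii) combine (i), (ii), Proposition~\ref{prop:car-C_0}, and closedness of $\cC^{\b-\a}_0$ in $C^{\b-\a}$ via approximation by $v_n\in C^\infty_c$. Two small points to correct in the write-up. First, the claimed cancellation $|\delta_e v|\lesssim \min\{r,r^2\}\,\|v\|_{C^{1+\g}}$ overstates the gain: for $v\in C^{1+\g}$ with $\g<1$ the correct bound is $|\delta_e v|\lesssim r\min\{1,r^{\g}\}\,\|\n v\|_{C^\g}$ (the paper obtains this from the representation $\delta_e v(x,r,\th)=\tfrac{r}{2}\int_0^1[\n v(x-rt\th)-\n v(x+rt\th)]\cdot\th\,dt$); the example $v(x)=|x|^{1+\g}$ shows $r^2$ is unattainable. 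The subsequent radial integral is still convergent because $\g>\a$, so this is a local fix, not a structural flaw. Second, in (iii) your final sentence applies Proposition~\ref{prop:car-C_0} directly to $I_\bullet v$, but $I_\bullet v$ is only known a priori to lie in $C^{\b-\a}$, not $C^{\g-\a}$, since $v$ is only $C^{1+\b}$; the correct order is to apply (i), (ii) and Proposition~\ref{prop:car-C_0} to each $I_\bullet v_n$ (which is fine since $\n v_n\in C^\g$ with compact support), conclude $I_\bullet v_n\in\cC^{\b-\a}_0$, and then pass to the limit using the $C^{\b-\a}$ convergence $I_\bullet v_n\to I_\bullet v$ and closedness of $\cC^{\b-\a}_0$. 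You already have all the ingredients for this; it is just a reordering of the last step.
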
	

\begin{proof}
We assume for simplicity that  $\|\nu\|_{ C^{\g}(\R^{N-1}\times [0,\infty)\times S^{N-2}) }\leq 1$.
We  first observe that
\be \label{eq:Ao-est1}
		|\nu(x,r,\th )|\leq  \min (1, r^{\gamma}), \qquad  |\d_o v(x,r,\th )|=r \left|\int_0^1 \n v(x-rt\th )\cdot \th dt \right|\leq r \|\n v\|_{L^\infty} 
\ee
		and for $x,x'\in \R^{N-1}$
\be \label{eq:Ao-est2}
		|\nu (x,r,\th )-\nu (x',r,\th ) |\leq \min (|x-x'|^{\gamma}, r^{\gamma}), \qquad
\ee
		and 
\be \label{eq:Ao-est3}
		|\d_o v(x,r,\th )- \d_o v(x',r,\th ) |\leq 2 \| \n v\|_{L^\infty }\min (|x-x'|,r ).
\ee
		Then,  by \eqref{eq:Ao-est1},	we have
		\begin{align}
		\| I_ov\|_{L ^{\infty}}&=\sup_{x\in\mathbb{R}^{N-1}} \int_{S^{N-2}}\int_0^\infty |\delta_o v(x,r,\theta)  \nu(x,r,\theta)|r^{-2-\a }drd\theta\nonumber\\
		&\leq C  \|\n v\|_{L^\infty} \int_0^\infty  \min (1, r^{\gamma}) r^{-1-\a} dr \leq C\|\n v\|_{L^\infty   }.\label{OperatorNorm001}
		\end{align}
We write
		\begin{align*}
			I_ov(x)-I_ov(z)
%
		%
		&=\int_{S^{N-2}}\int_{0}^\infty r^{-2-\alpha}\delta_o v(x,r,\theta)[\nu(x,r,\theta)-\nu(z,r,\theta)]\;drd\theta\\&+\int_{S^{N-2}}\int_{0}^\infty r^{-2-\alpha}[\delta_o v(x,r,\theta)-\delta_o v(z,r,\theta)]\nu(x,r,\theta)\;dr d\th.
		\end{align*}
From this, \eqref{eq:Ao-est1}, \eqref{eq:Ao-est2} and \eqref{eq:Ao-est3}, we get
		\begin{align*}
		|I_ov(x)-I_ov(z)|&\leq C \|\n v\|_{L^\infty } \int_{0}^\infty r^{-1-\alpha}\min\{r^{\gamma},|x-z|^{\gamma}\}\;dr \\
		&+C \|\n v\|_{L^\infty } \int_{0}^\infty r^{\gamma-2-\alpha}\min\{|x-z|,r\} \;dr \leq C \|\n v\|_{L^\infty  }|x-z|^{\gamma-\alpha}.
		\end{align*}
		Therefore
		\begin{align}
		[I_ov]_{C ^{\gamma-\alpha} }\leq C \|\n v\|_{L^\infty  }\label{OperatorNorm0002}.
		\end{align}
		The inequalities \eqref{OperatorNorm001} and \eqref{OperatorNorm0002} imply \eqref{eq:Io}.\\
For simplicity, we assume that 
$\|\mu\|_{ C^{\g}(\R^{N-1}\times [0,\infty)\times S^{N-2}) }\leq 1$.
We write 
\be \label{eq:dev-mean-v}
\d_e v(x,r,\th )=\frac{r}{2} \int_0^1[\n v(x-rt\th)- \n v(x+rt\th)]dt.
\ee
Hence, we have 
\be \label{eq:Ae-est1}
  |\d_e v(x,r,\th )  |\leq r \min (1, r^{\g}) \|\n v\|_{C^{\g}} 
\ee
		and for $x,x'\in \R^{N-1}$
\be \label{eq:Ae-est2}
		|\mu (x,r,\th )-\mu (x',r,\th ) |\leq \min (|x-x'|^{\gamma}, 1) 
\ee
		and 
\be \label{eq:Ae-est3}
		|\d_e v(x,r,\th )- \d_e v(x',r,\th ) |\leq  r \| \n v\|_{C^{\g}}\min (|x-x'|^\g,r^\g ).
\ee
We now estimate, using \eqref{eq:Ae-est1}, 
	\begin{align}
		\| I_ev\|_{L ^{\infty}}&=\sup_{x\in\mathbb{R}^{N-1}} \int_{S^{N-2}}\int_0^\infty |\delta_e v(x,r,\theta)   \mu (x,r,\theta)|r^{-2-\a }drd\theta\nonumber\\
		&\leq C  \|\n v\|_{C^{\g}} \int_0^\infty  \min (1, r^{\gamma}) r^{-1-\a}dr \leq C\|\n v\|_{C ^{\g} }.\label{OperatorNorm001-e}
		\end{align}
We have 
		\begin{align*}
			I_ev(x)-I_ev(z)&=\int_{S^{N-2}}\int_{0}^\infty r^{-2-\alpha}\delta_e v(x,r,\theta)[ \mu (x,r,\theta)- \mu (z,r,\theta)]\;drd\theta\\&+\int_{S^{N-2}}\int_{0}^\infty r^{-2-\alpha}[\delta_e v(x,r,\theta)-\delta_e v(z,r,\theta)] \mu (x,r,\theta)\;dr d\th.
		\end{align*}
From this, \eqref{eq:Ao-est1}, \eqref{eq:Ao-est2} and \eqref{eq:Ao-est3}, we get
		\begin{align*}
		|I_ev(x)-I_ev(z)|&\leq C \|\n v\|_{C^{\g} } \min\{1,|x-z|^{\gamma}\} \int_{0}^\infty r^{-1-\alpha}\ \min\{1,r^{\gamma}\};dr\\
		&+C \|\n v\|_{C^\g  } \int_{0}^\infty r^{-1-\alpha}\min\{|x-z|^\g,r^\g\} \;dr  \leq C \|\n v\|_{C^\g  }|x-z|^{\gamma-\alpha}.
		\end{align*}
This and \eqref{OperatorNorm001-e} give \eqref{eq:Ie}.
 The proof of $(i)$ is thus complete.\\
		
	For $(ii)$, 	we  assume that Supp$\n v \subset B_{R'}$, for some $R'>1$.
%
%
		We let $R>2R'$ and we  split
\begin{align*}
I_ev(x)&=\int_{S^{N-2}}\int_0^R \delta_e v(x,r,\theta)  \mu (x,r,\theta)r^{-2-\a }drd\theta+ \int_{S^{N-2}}\int_R^\infty \delta_e v(x,r,\theta)  \mu (x,r,\theta)r^{-2-\a }drd\theta\\
&=:I_e^1(x)+I_e^2(x).
\end{align*}
By  \eqref{eq:dev-mean-v}, 
\be\label{eq:Ie1-est-1}
 I_e^1(x)=0\qquad\textrm{ for $|x|\geq R'+R$.}
\ee
In addition, for $|x|>R$, by  \eqref{eq:dev-mean-v},
\begin{align}
|I_e^2(x)|\leq    \|\n v\|_{L^\infty } \int_0^1 \int_R^{\frac{|x|+R'}{t}}   r^{-1-\a }dr \, dt\leq C  \|\n v\|_{L^\infty}\left( R^{-\a}+( |x|+R')^{-\a}\right),
\end{align}
so that 
\be\label{eq:Ie1-est-2}
	\lim_{R\to\infty}   \|I_e\|_{L ^{\infty} } =0.
\ee
Now, for $h\in \R^{N-1}$, 
\begin{align*}
|I_e^2(x+h)- I_e^2(x)|&\leq   |h|^{\g-\a}  \|\n v\|_{C^{\g-\a}} \int_0^1 \int_R^{\frac{|x|+|h|+ R'}{t}}   r^{-1-\a }dr \, dt\\
&\leq C   |h|^{\g-\a}  \|\n v\|_{C^{\g-\a} }\left( R^{-\a}+ (|x|+|h|+R)^{-\a}\right).
\end{align*}
Combining this with  \eqref{eq:Ie1-est-2} and \eqref{eq:Ie1-est-1}, we deduce that  $\lim_{R\to\infty}   \|I_ev\|_{C ^{\gamma-\alpha}(\R^{N-1}\setminus B_R) }=0$. By similar argument we have  $\lim_{R\to\infty}   \|I_ov\|_{C ^{\gamma-\alpha}(\R^{N-1}\setminus B_R) }=0$, so that  \eqref{eq:tozero-} holds.\\

For  $(iii)$, we pick $v\in \cC ^{1+\g}_0 (\mathbb{R}^{N-1})$. Then there exists $v_n\in C^\infty_c(\R^{N-1})$ such that $v_n\to v$ in $C^{1+\g}(\R^{N-1})$.  Thanks to $(i)$, we have $I_e v_n,I_ov_n\in \cC ^{\g-\alpha}_0 (\mathbb{R}^{N-1})$. Now, by $(ii)$ and  Proposition \ref{prop:car-C_0}, we get  $I_ov_n , I_ev_n\in \cC ^{\b-\alpha}_0 (\mathbb{R}^{N-1})$. Once again by $(i)$,  we have $I_e v_n\to I_e v$ and $I_ov_n\to I_o v$ in $C^{\b-\a}(\R^{N-1})$. This then yields the conclusion, since  $\cC ^{\b-\alpha}_0 (\mathbb{R}^{N-1})$ is closed in $C^{\b-\a}(\R^{N-1})$.
\end{proof}		

        \subsection{A nonlocal comparison principle in the entire space}

In this subsection we provide a comparison principle for a class of nonlocal evolution equations relying on kernel assumptions in polar coordinates, which will be used later on.
        
\begin{proposition}\label{prop:mmp}
  Let $\a \in (0,1)$, $\b\in (\a,1)$,  $T>0$, $V\in C\left([0,T],C_b(\R^{N-1})\right)$ and let
   let $\mu \in C\left([0,T], C^\b( \R^{N-1}\times [0,\infty)\times S^{N-2})\right)$ with $\mu>0$ and  $\mu(t,x,0,\th)=\mu(t,x,0,-\th)$
Moreover, let $\beta >\alpha$, and let $u \in L^\infty([0,T], C^{\b}(\R^{N-1}))$ satisfy
$$
u(\cdot,x)\in C^1([0,T]) \quad \text{for every $x \in \R^{N-1}$,}\quad  
u(t,\cdot)\in C^{1+\b}_{loc}(\R^{N-1}) \quad \text{for every $t \in [0,T]$}
$$
and    
\be\label{eq:equ-sub-sol}
\de_t u(t,x)+P.V.\int_{\R^{N-1}}\frac{u(t,x)-u(t,x+y)}{|y|^{N+\a}} \mu(t, x,|y|,y/|y|) \, dy+ V(t,x)\cdot \n u(t,x)\leq 0 
\ee
 for $(t,x)\in [0,T]\times \R^{N-1}$.
Then $\displaystyle\sup_{[0,T]\times \R^{N-1}} u=\sup_{x\in \R^{N-1}} u(0,x).$
\end{proposition}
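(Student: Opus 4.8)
The plan is to argue by contradiction, using a penalization/localization argument to reduce the maximum principle on the entire space to an interior maximum that can be ruled out by inspecting the sign of the nonlocal term. Suppose $M:=\sup_{[0,T]\times\R^{N-1}}u > \sup_{x}u(0,x)=:M_0$. Fix $\eta>0$ small with $M-2\eta T > M_0$ and consider $v(t,x):=u(t,x)-\eta t$; then $v$ still satisfies \eqref{eq:equ-sub-sol} with the right-hand side replaced by $-\eta<0$ (the nonlocal operator and the drift are linear and annihilate or only contribute harmlessly to the constant-in-$x$ term $-\eta t$, since $\n(\eta t)=0$ and the nonlocal term of $\eta t$ is zero), and $\sup_{[0,T]\times\R^{N-1}}v$ is still strictly larger than $\sup_x v(0,x)$. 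Because $u\in L^\infty([0,T],C^\b(\R^{N-1}))$ is only bounded, not decaying, the supremum of $v$ need not be attained; so I would further subtract a small spatial penalization $\delta\,\chi(x)$ where $\chi$ is smooth, $\chi\geq 0$, $\chi(x)\to\infty$ as $|x|\to\infty$, and $\chi$ has bounded derivatives and bounded nonlocal operator value (e.g. $\chi(x)=\sqrt{1+|x|^2}$, for which $|\n\chi|\leq 1$ and the integral $\int \frac{\chi(x)-\chi(x+y)}{|y|^{N+\a}}\mu\,dy$ is bounded uniformly in $x$, using $\mu\in C^\b$ in the $r$-variable to control the singularity near $r=0$ via a second-order Taylor estimate on $\chi$, and $|\chi(x)-\chi(x+y)|\le|y|$ for the tail). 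For $\delta$ small enough, $w_\delta:=v-\delta\chi$ still has $\sup w_\delta > \sup_x w_\delta(0,x)$, and now $w_\delta\to-\infty$ as $|x|\to\infty$ uniformly in $t$, so the supremum is attained at some point $(t_0,x_0)$ with $t_0>0$.

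At the maximum point $(t_0,x_0)$ I would extract the standard three facts: $\de_t w_\delta(t_0,x_0)\geq 0$ (since $t_0>0$, with equality if $t_0<T$ and $\geq$ if $t_0=T$), $\n w_\delta(t_0,x_0)=0$, and — the crucial nonlocal ingredient — $w_\delta(t_0,x_0)-w_\delta(t_0,x_0+y)\geq 0$ for all $y$, hence $\mathrm{P.V.}\int \frac{w_\delta(t_0,x_0)-w_\delta(t_0,x_0+y)}{|y|^{N+\a}}\mu(t_0,x_0,|y|,y/|y|)\,dy\geq 0$ because $\mu>0$. The principal value here is legitimate: $w_\delta(t_0,\cdot)\in C^{1+\b}_{loc}$ with $\b>\a$, so near $y=0$ the integrand is $O(|y|^{1+\b-N-\a})$ (the linear term cancels by evenness of $|y|^{-N-\a}$ after noting $\mu(t_0,x_0,0,\th)=\mu(t_0,x_0,0,-\th)$, which is exactly why that symmetry hypothesis was imposed), and at infinity it is $O(|y|^{-N-\a})$ by boundedness of $w_\delta$. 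Now translate these back to $u=w_\delta+\eta t+\delta\chi$: plugging into \eqref{eq:equ-sub-sol} at $(t_0,x_0)$,
\begin{align*}
0 \;\geq\; &\de_t u(t_0,x_0) + \mathrm{P.V.}\!\int \frac{u(t_0,x_0)-u(t_0,x_0+y)}{|y|^{N+\a}}\mu\,dy + V(t_0,x_0)\cdot\n u(t_0,x_0)\\
\;=\;& \underbrace{\de_t w_\delta(t_0,x_0)}_{\geq 0} + \eta + \delta\cdot 0 + \underbrace{\mathrm{P.V.}\!\int \tfrac{w_\delta(t_0,x_0)-w_\delta(t_0,x_0+y)}{|y|^{N+\a}}\mu\,dy}_{\geq 0}\\
 &+ \delta\,\mathrm{P.V.}\!\int \tfrac{\chi(x_0)-\chi(x_0+y)}{|y|^{N+\a}}\mu\,dy + V(t_0,x_0)\cdot\bigl(\underbrace{\n w_\delta(t_0,x_0)}_{=0}+\delta\n\chi(x_0)\bigr),
\end{align*}
and the two error terms carrying $\delta$ are bounded by $C\delta$ with $C$ depending only on $\|V\|_\infty$, the bound on $\n\chi$, and the bound on the nonlocal operator of $\chi$ (itself controlled by $\|\mu\|_{C\left([0,T],C^\b\right)}$). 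Choosing $\delta<\eta/(2C)$ gives $0\geq \eta - C\delta \geq \eta/2 > 0$, a contradiction. Hence $M\leq M_0$, and since $u(0,x)\leq M$ trivially, equality holds.

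The main obstacle, and the only genuinely technical point, is the noncompactness of $\R^{N-1}$: one must simultaneously penalize in $t$ (to turn the differential inequality into a strict one, handling $t_0=T$) and in $x$ (to force the supremum to be attained), while verifying that the spatial penalizer $\chi$ contributes only an $O(\delta)$ error to the \emph{nonlocal} term — this requires the Hölder-in-$r$ regularity of $\mu$ near $r=0$ together with a uniform-in-$x_0$ second-order bound on $\chi$, and the tail estimate uses $\chi$ Lipschitz plus the integrability of $|y|^{-N-\a}$ away from the origin. Everything else is the classical nonlocal maximum principle mechanism, where positivity of the kernel $\mu$ and the evenness condition $\mu(t,x,0,\th)=\mu(t,x,0,-\th)$ (ensuring the principal value of the linear part vanishes so that only the sign-definite second-order defect survives) do the work.
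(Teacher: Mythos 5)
Your proof is correct, and it reaches the conclusion by the same overall strategy as the paper (penalize in $t$ to make the inequality strict, then localize in $x$ to find an interior maximum and read off sign information), but the localization step is implemented differently. The paper multiplies $u$ by a compactly supported cutoff $\eta_R(\cdot)=\eta(\cdot/R)$ and estimates the resulting commutator $\overline{L}(t)(\eta_R u)-\eta_R\,\overline{L}(t)u$ using Lemma~\ref{SemigroupLemma001}, showing it is $o_R(1)$ in $L^\infty$; you instead subtract a coercive barrier $\delta\chi$ with $\chi(x)=\sqrt{1+|x|^2}$ and estimate $\overline{L}(t)\chi$ directly. Both techniques lead to the same contradiction at a maximum point, and both rely in the same essential way on the two hypotheses you isolate correctly: $\mu>0$ (sign of the nonlocal term at the max) and the evenness $\mu(t,x,0,\th)=\mu(t,x,0,-\th)$ together with $\mu\in C^\b$ in $r$ with $\b>\a$ (cancellation of the linear term in the penalizer so that only an integrable $O(r^{1+\b-\a})$ remainder survives near $r=0$). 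Your barrier route is marginally more self-contained because it avoids the commutator computation, at the small cost of a separate uniform bound on $\overline{L}(t)\chi$; the cutoff route has the advantage that the auxiliary function remains bounded, so no coercivity argument is needed to guarantee that the supremum is attained, and it lets the paper reuse Lemma~\ref{SemigroupLemma001} verbatim. One point worth making explicit in your write-up: the evenness of $\mu(\cdot,\cdot,0,\cdot)$ is needed not only to make the principal value for $w_\delta$ legitimate but also, separately, to make the bound on $\overline{L}(t)\chi$ uniform in $x$ (after splitting $\mu(t,x,r,\th)$ into its value at $r=0$ plus a H\"older remainder); you allude to this but only in the $w_\delta$ context.
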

\begin{proof}
We consider $\eta\in C^\infty_c(B_2)$ with $0\leq \eta \leq 1$ on $\R^{N-1}$ and $\eta=1$ on $B_{1}$. We define, for $\e,R>0$, 
$$
v(t,x)=\eta_R(x) u(t,x)-\e t,
$$
where $\eta_R(x)=\eta(x/R)$.
We also define, for $w \in  C^{1+\b}_{loc}(\R^{N-1})\cap L^\infty(\R^{N-1})$ and $x \in \R^{N-1}$,
$$
\ov L(t) w(x)= P.V.\int_{\R^{N-1}}\frac{w(x)-w(x+y)}{|y|^{N+\a}} \mu(t,x,|y|,y/|y|) \, dy.
$$
We then  have 
\begin{align*}
&\ov L(t) (\eta_R u(t)) (x)
=\eta_R(x) \ov L(t)( u(t))(x) \\
&+ u(t,x) P.V.\int_{\R^{N-1}}\frac{ \eta_R(x) -\eta_R(x+y)}{|y|^{N+\a}} \mu(t, x,|y|, y/|y|) \, dy\\
&+ P.V.\int_{\R^{N-1}}\frac{ \eta_R(x) -\eta_R(x+y)}{|y|^{N+\a}} (u(t,x+y)-u(t,x))\mu(t, x,|y|, y/|y|) \, dy.
\end{align*}
From this, we get 
\begin{align*}
&\ov L(t) (\eta_R u(t)) (x)
- \eta_R(x) P.V.\int_{\R^{N-1}}\frac{ u(t,x)-u(t, x+y)}{|y|^{N+\a}} \mu(t,x,|y|, y/|y|) \, dy\\
&= \frac{u(t,x)}{2} P.V.\int_{\R^{N-1}}\frac{ 2\eta_R(x)-\eta_R(x-y) -\eta_R(x+y)}{|y|^{N+\a}} \mu(t, x,0, y/|y|) \, dy\\
&+{u(t,x)} P.V.\int_{\R^{N-1}}\frac{ \eta_R(x) -\eta_R(x+y)}{|y|^{N+\a}}[ \mu(t, x,|y|, y/|y|)-\mu(t, x,0, y/|y|)] \, dy\\
&+ P.V.\int_{\R^{N-1}}\frac{ \eta_R(x) -\eta_R(x+y)}{|y|^{N+\a}} (u(t,x+y)-u(t,x))\mu(t, x,|y|, y/|y|) \, dy.
\end{align*}
Hence,  by Lemma \ref{SemigroupLemma001} we  get 
$$
 \|\ov L(t) (\eta_R u(t))- \eta_R\ov L(t)  u(t)\|_{L^\infty(\R^{N-1})}\leq C\|\n \eta_R\|_{C^1_b} \|u(t)\|_{C^{\b}}\leq \frac{C}{R} \|u(t)\|_{C^{\b}}
$$
for $t \in [0,T]$. From this and \eqref{eq:equ-sub-sol}, we then obtain 
\be\label{eq:eq-fro-v-cut}
\de_t v+\ov L(t) v+ V\cdot \n v\leq -\e + F_R \qquad\textrm{ in $[0,T]\times \R^{N-1}$,}
\ee
with
\be\label{eq:estimF_R-mmp}
\|F_R\|_{L^\infty((0,T)\times \R^{N-1})}\leq \frac{C  \|u\|_{L^\infty((0,T),C^{\b})}}{R}(1+\|V\|_{L^\infty((0,T)\times \R^{N-1})}).
\ee
We claim that 
\be \label{eq:mmpv-v0}
\max_{[0,T]\times \R^{N-1}} v=\max_{ x\in  \R^{N-1}} v(0,x).
\ee
Indeed, let  $(t_0,x_0)\in [0,T]\times \R^{N-1}$ be such that $v(t_0,x_0)=\max_{[0,T]\times \R^{N-1}} v$. Suppose that $t_0>0$.  The maximality property then implies that $ \n_x v(t_0,x_0) =0$, $\de_t v(t_0,x_0)\geq 0$ and also $L(t_0)v(x_0) \ge 0$, since $\mu \ge 0$ by assumption. By \eqref{eq:eq-fro-v-cut} we thus have 
$$
0\leq \ov L(t_0)v(x_0) \leq  -\e+\|F_R\|_{L^\infty((0,T)\times \R^{N-1})}
$$
which is not possible if $R$ is large enough, thanks to \eqref{eq:estimF_R-mmp}. Therefore $t_0=0$ and  thus \eqref{eq:mmpv-v0} holds   as claimed. \\
  Letting now $R\to \infty$ and then  $\e\to 0$ in \eqref{eq:mmpv-v0}, we get the result.
\end{proof}

	\section{Analytic semigroups, their generators, intermediate spaces and associated semilinear evolution equations}\label{Section3}

	We begin by introducing some notions regarding function spaces and analytic semigroups (see e.g. \cite[Chapter 2]{lunardi2012analytic}). 
        For normed vector spaces $\cX,\cX'$ we let $\cL(\cX,\cX')$ denote the space of continuous linear operators $\cX \to \cX'$, endowed with the usual operator norm. As usual, we also set $\cL(\cX):= \cL(\cX,\cX)$.

A strongly continuous analytic semigroup on a Banach space $\cX$ is a family of operators $\{T(t)\}_{t \ge 0} \subset \cL(\cX)$ with the following properties: 
\begin{itemize}
\item[(i)] $T(0)=\id$, $T(t+s)= T(t)T(s)$ for all $t,s \ge 0$.
\item[(ii)] The function $(0,\infty) \to \cL(\cX)$, $t \mapsto T(t)$ is analytic.
\item[(iii)] The function $[0,\infty) \to \cX$, $t \mapsto T(t)u$ is continuous for every $u \in \cX$.
\end{itemize}

The generator of such a semigroup is given as an (unbounded) linear operator $B_0: \cY \subset \cX \to \cX$ by
$$
B_0 u := \lim_{t \to 0^+} \frac{T(t)u-u}{t}, \qquad u \in  \cY,
$$
where the domain $\cY \subset \cX$ of $B_0$ is given as the subspace of $u \in \cX$ for which this limit exists.

We recall that an operator $B_0: \cY \subset \cX \to \cX$ generates a strongly continuous analytic semigroup in this sense if and only if $B_0$ is sectorial (see e.g. \cite[Def. 2.0.1]{lunardi2012analytic} for a definition) and its domain $\cY$ is dense in $\cX$. In such a case, the operator $B_0$ is also closed, which means that $\cY$ is a Banach space with the graph norm 
$u \mapsto \|u\|_{\cX} + \|\cB_0 u\|_{\cX}$ on $\cY$. As a consequence, by the open mapping theorem, the graph norm on $\cY$ is equivalent to any other given norm $\|\cdot\|_{\cY}$ on $\cY$ for which $(\cY, \|\cdot\|_{\cY})$ is a Banach space. 

In the following, if $\cB_0: \cY \subset \cX \to \cX$ generates a strongly continuous analytic semigroup, we shall denote this semigroup by
$$
t \mapsto e^{B_0 t} \in \cL(\cX),\qquad t \ge 0.
$$
As noted in \cite[Proposition 2.1.1]{lunardi2012analytic},   for all  $k\in \mathbb{N}$,
	there exists $M_k>0$, such that
	\begin{align}\label{eq:B_0k-M_k}
	\|t^kB_0^ke^{B_0t}\|_{\mathcal{L}(Y)}\leq M_k,\hspace{.5cm}\mbox{for all  $t\in (0,1]$.}
	\end{align}
In order to obtain optimal regularity estimates in time, it is convenient to  introduce, for $\rho\in [0,1)$, the intermediate space
	\begin{align}\label{eq:def-interm-spcace}
	\mathcal{D}_{B_0}(\rho,\infty)=\{f\in \cY:[f]_{\mathcal{D}_{B_0}(\rho,\infty)}=\sup_{0<t\leq 1}\|t^{1-\rho}B_0e^{B_0t}f\|_\cY<\infty\},
	\end{align}
        endowed with the norm $\|f\|_{\calD_{B_0}(\rho,\infty)}=\|f\|_\cY+[f]_{\mathcal{D}_{B_0}(\rho,\infty)}.$
           
         We then have the following result taken from \cite[Theorem 4.3.1 (iii)]{lunardi2012analytic}.
	\begin{theorem} \label{thhhh:Corollary0002}
Let $B_0: \cY \subset \cX \to \cX$ be the generator of a strongly continuous analytic semigroup. Let   $T>0$, $\rho\in (0,1)$, $u_0\in \cX$, and let 
		$f\in C^\rho([0,T],\cY)$   be such that  $f(0)-B_0u_0 \in {\mathcal{D}_{B_0}(\rho,\infty)}  $.   Then the problem 
		\begin{equation*}
		\left\{
		\begin{array}{rll}
		u'(t)+B_0u(t)&=&f(t), \hspace{1cm}  t\in(0,T]\\
		u(0)&=&u_0
		\end{array}
		\right.
		\end{equation*}
admits a unique   solution $u\in C^\rho([0,T],\cX)\cap C^{1+\rho}([0,T],\cY)$. Moreover,
 there exists  $C_T=C(T,\rho,M_0,M_1,M_2)>0$ such that
			\begin{align}\label{eq:fdgeB_0}
		\| B_0 (u- u_0)\|_{C^\rho([0,T],\cY) }+ &	\| u-u_0\|_{  C^{1+\rho}([0,T],\cY)} \nonumber\\
		&\leq C_T \Big(\|f- B_0 u_0\|_{C^\rho([0,T],\cY)}  +\|f(0)-B_0u_0 \|_{\mathcal{D}_{B_0}(\rho,\infty)}\Big)
\end{align} 
and  $C_T\leq C_{T_0}$ for all $T\leq T_0$.
	\end{theorem}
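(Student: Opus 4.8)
\medskip
\noindent\textit{Proof strategy.}
This is the classical abstract $C^\rho$--maximal regularity theorem for generators of analytic semigroups (\cite[Thm.~4.3.1(iii)]{lunardi2012analytic}), so the plan is to run the standard variation-of-constants argument while keeping every constant explicit. Throughout I write $e^{-tB_0}$ for the analytic semigroup solving the homogeneous problem $u'+B_0u=0$, so that $\tfrac{d}{dt}e^{-tB_0}=-B_0e^{-tB_0}$ on $\cY$; in this convention \eqref{eq:B_0k-M_k} reads $\|t^kB_0^ke^{-tB_0}\|_{\cL(\cY)}\le M_k$ for $t\in(0,1]$ (hence, by the semigroup law, with a $T$--dependent constant on $(0,T]$), and \eqref{eq:def-interm-spcace} says that $g(0)\in\mathcal{D}_{B_0}(\rho,\infty)$ iff $\sup_{0<t\le1}\|t^{1-\rho}B_0e^{-tB_0}g(0)\|_{\cY}<\infty$. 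First I would remove the initial datum: with $w:=u-u_0$ and $g:=f-B_0u_0$, the problem becomes $w'+B_0w=g$ on $(0,T]$, $w(0)=0$, where $g\in C^\rho([0,T],\cY)$ (a constant shift of $f$) and, by the compatibility hypothesis, $g(0)=f(0)-B_0u_0\in\mathcal{D}_{B_0}(\rho,\infty)$.

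Next I would identify the candidate. If $w$ is any solution in the asserted class, then $\tfrac{d}{ds}\bigl(e^{-(t-s)B_0}w(s)\bigr)=e^{-(t-s)B_0}\bigl(w'(s)+B_0w(s)\bigr)=e^{-(t-s)B_0}g(s)$ on $(0,t)$, whence $w(t)=\int_0^t e^{-(t-s)B_0}g(s)\,ds$; applying this to the difference of two solutions ($g\equiv0$, $w(0)=0$) gives uniqueness. It then remains to prove that this $w$ has the claimed regularity, and for that I split off the value $g(0)$: $w=w_1+w_2$ with $w_1(t)=\int_0^t e^{-\sigma B_0}g(0)\,d\sigma$ and $w_2(t)=\int_0^t e^{-(t-s)B_0}\bigl(g(s)-g(0)\bigr)\,ds$. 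For $w_1$ one computes $w_1'(t)=e^{-tB_0}g(0)$ and $B_0w_1(t)=g(0)-e^{-tB_0}g(0)$, and both belong to $C^\rho([0,T],\cY)$ \emph{exactly} because $g(0)\in\mathcal{D}_{B_0}(\rho,\infty)$: for $0\le s<t\le T$, $\|e^{-tB_0}g(0)-e^{-sB_0}g(0)\|_{\cY}\le\int_s^t\|B_0e^{-\sigma B_0}g(0)\|_{\cY}\,d\sigma\le[g(0)]_{\mathcal{D}_{B_0}(\rho,\infty)}\int_s^t\sigma^{\rho-1}\,d\sigma\le\rho^{-1}[g(0)]_{\mathcal{D}_{B_0}(\rho,\infty)}(t-s)^{\rho}$, using \eqref{eq:def-interm-spcace} and $\sigma^{\rho-1}\le(\sigma-s)^{\rho-1}$. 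This is where the compatibility hypothesis enters, and it furnishes the $[\,\cdot\,]_{\mathcal{D}_{B_0}(\rho,\infty)}$--term of \eqref{eq:fdgeB_0}.

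The term $w_2$ is the main obstacle. That $B_0w_2(t)$ is well defined in $\cY$ follows from the identity
\[
B_0w_2(t)=\int_0^t B_0e^{-(t-s)B_0}\bigl(g(s)-g(t)\bigr)\,ds+\bigl(\id-e^{-tB_0}\bigr)\bigl(g(t)-g(0)\bigr),
\]
whose first integrand is $\le M_1[g]_{C^\rho([0,T],\cY)}(t-s)^{\rho-1}$ by \eqref{eq:B_0k-M_k}, hence integrable, and whose boundary term is $O\bigl([g]_{C^\rho([0,T],\cY)}t^{\rho}\bigr)$. The genuinely delicate step --- the one I expect to be the main obstacle --- is the H\"older bound $\|B_0w_2(t)-B_0w_2(s)\|_{\cY}\le C(t-s)^{\rho}[g]_{C^\rho([0,T],\cY)}$ for $s<t$: I would split $\int_0^t-\int_0^s$ into the near-diagonal piece $\int_s^t$, treated as above after replacing $g(\sigma)$ by $g(s)$ and $g(t)$, and the far piece over $(0,s)$, on which one uses $e^{-(t-\sigma)B_0}-e^{-(s-\sigma)B_0}=-\int_s^t B_0e^{-(\tau-\sigma)B_0}\,d\tau$ together with the order-two sectorial bound $\|B_0^2e^{-\tau B_0}\|_{\cL(\cY)}\le M_2\tau^{-2}$ of \eqref{eq:B_0k-M_k}, once more subtracting from $g(\sigma)$ suitable nearby values so that every resulting integral converges --- this is exactly the technical lemma underlying \cite[Thm.~4.3.1]{lunardi2012analytic}. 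Once $B_0w_2\in C^\rho([0,T],\cY)$ is established, $w_2'=g-g(0)-B_0w_2\in C^\rho([0,T],\cY)$ as well, and $w_2(0)=(B_0w_2)(0)=0$.

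Adding the two parts, $w\in C^{1+\rho}([0,T],\cY)$ with $B_0w\in C^\rho([0,T],\cY)$; since $w\in C^1([0,T],\cY)$ is in particular Lipschitz with values in $\cY\hookrightarrow\cX$, one also gets $w\in C^\rho([0,T],\cX)$ for free. Reverting to $u=u_0+w$ produces the solution, and tracking the constants --- which come only from $M_0,M_1,M_2$ (extended to $(0,T]$ by the semigroup law, whence the $T$--dependence), the elementary integrals in $\sigma$ and $\tau$, and the length of $[0,T]$ --- yields \eqref{eq:fdgeB_0} with $C_T=C(T,\rho,M_0,M_1,M_2)$; since every bound only worsens as the time interval grows, $C_T\le C_{T_0}$ whenever $T\le T_0$. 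As indicated, the single genuine difficulty is the H\"older continuity of the singular convolution $B_0w_2$, which forces the two-scale splitting of the Duhamel integral and the use of the second-order sectorial estimate; all the rest is a direct consequence of \eqref{eq:B_0k-M_k} and \eqref{eq:def-interm-spcace}.
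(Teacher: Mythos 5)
Your proposal is correct and corresponds to the paper's own approach: the paper's proof of this statement is a one-line citation of Lunardi \cite[Theorem~4.3.1]{lunardi2012analytic}, together with the remark (attributed to the beginning of Section 4.1 of that book) that the constant $C_T$ is nondecreasing in $T$, and your Duhamel-formula/two-scale-splitting sketch is exactly the classical argument underlying Lunardi's theorem. You have simply spelled out the proof that the paper defers to its reference, so there is no substantive difference.
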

	\begin{proof}
	The existence,  uniqueness and
	 \eqref{eq:fdgeB_0} follow from  \cite[Theorem 4.3.1]{lunardi2012analytic}. Moreover,  as explained in the beginning of Section 4.1 in \cite{lunardi2012analytic},  the constant $C_T$ is  increasing in $T$. 	
	\end{proof}

        \begin{remark}
          \label{intermediate-spaces-remark}
          Let $B_0: \cY \subset \cX \to \cX$ be the generator of a strongly continuous analytic semigroup and $\s \in (0,1)$. Then, as noted in \cite[Proposition 2.2.2]{lunardi2012analytic}, the intermediate space $\mathcal{D}_{B_0}(\rho,\infty)$ does not depend on the operator $B_0$ itself, as it coincides with a real interpolation space between the spaces $\cY$ and $\cX$ (with equivalence of respective norms). We shall use this fact in the following where we consider a H\"older space setting.
          
        \end{remark}

        \subsection{Intermediate spaces in a H\"older space setting}\label{sec:interm-spac-hold}

        In our application to the nonlocal mean curvature flow, we will need to consider the special case where $\cY= \cC^{1+\b}_0 (\mathbb{R}^{N-1})$ and $\cX = \cC^{\b-\s}_0(\mathbb{R}^{N-1})$ for values $\s\in (0,1)$,  $\b\in (\s,1)$. Our next result provides a characterization of the  intermediate space $\mathcal{D}_{B_0}(\rho,\infty)$ defined in \eqref{eq:def-interm-spcace} in this particular case. As noted in Remark~\ref{intermediate-spaces-remark}, this space does not depend on the particular choice of a generator $B_0: \cY \subset \cX \to \cX$ of a strongly continuous analytic semigroup. 

\begin{proposition}\label{prop:car-inerpol-space}
Let $\s\in (-1,1)$,  $\b\in (\s,1+\s)$ with $\b\not\in \N$. Let    $B_0:\cC^{1+\b}_0 (\mathbb{R}^{N-1}) \subset  \cC^{\b-\s}_0(\mathbb{R}^{N-1}) \to \cC^{\b-\s}_0(\mathbb{R}^{N-1})$ be any infinitesimal generator of a strongly continuous analytic semigroup on $ \cC^{\b-\s}_0(\mathbb{R}^{N-1})$. Let  $\rho\in (0,\min(1,\frac{1}{1+\s}))$ and put $ \g_\rho:=\b+\rho(1+\s).$   
 Then $ \cC^{\b-\s}_0(\mathbb{R}^{N-1})\cap C^{\g_{\rho}-\s}({\R^{N-1}})\subset  \mathcal{D}_{B_0}(\rho,\infty) $ and  there exists $c=c(\s,\b,\rho)>0$ such that
\be \label{eq:int-incl}
   \|f\|_{\mathcal{D}_{B_0}(\rho,\infty) }\leq c \|f\|_{C^{\g_\rho-\s}} \qquad\textrm{for all $f\in  \cC^{\b-\s}_0(\mathbb{R}^{N-1})\cap C^{\g_{\rho}-\s}({\R^{N-1}})$.}
\ee
If moreover $\g_\rho-\s \not\in \N$, then   $ \cC^{\b-\s}_0(\mathbb{R}^{N-1})\cap C^{\g_{\rho}-\s}({\R^{N-1}})= \mathcal{D}_{B_0}(\rho,\infty) $, with equivalence of their respective norms.  
\end{proposition}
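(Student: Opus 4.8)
Throughout, write $\cX=\cC^{\b-\s}_0(\R^{N-1})$ and $\cY=\cC^{1+\b}_0(\R^{N-1})$, so that $\cY\hookrightarrow\cX$ (because $1+\b>\b-\s$, as $\s>-1$). The plan is to identify $\mathcal{D}_{B_0}(\rho,\infty)$ with a real interpolation space and then reduce to the classical interpolation theory of H\"older spaces. By Remark~\ref{intermediate-spaces-remark}, $\mathcal{D}_{B_0}(\rho,\infty)$ does not depend on the chosen generator $B_0$ and coincides, up to equivalence of norms, with the real interpolation space $(\cX,\cY)_{\rho,\infty}$. Two consequences are then immediate: first, $(\cX,\cY)_{\rho,\infty}\subset\cX+\cY=\cX$, so every element of $\mathcal{D}_{B_0}(\rho,\infty)$ automatically lies in $\cC^{\b-\s}_0(\R^{N-1})$, with control of the $C^{\b-\s}$-norm; second, the exponents satisfy $\g_\rho-\s=(1-\rho)(\b-\s)+\rho(1+\b)$, and the hypothesis $\rho\in(0,1)$ forces $\b-\s<\g_\rho-\s<1+\b$, while $\b-\s\in(0,1)$ and $1+\b\notin\N$ (as $\b\notin\N$).

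First I would prove the inclusion $\cC^{\b-\s}_0(\R^{N-1})\cap C^{\g_\rho-\s}(\R^{N-1})\subset\mathcal{D}_{B_0}(\rho,\infty)$ together with \eqref{eq:int-incl} by estimating the $K$-functional of the couple $(\cX,\cY)$ via mollification. Fix $\varphi\in C^\infty_c(\R^{N-1})$ with $\int\varphi=1$ and with sufficiently many vanishing moments, and set $\varphi_\e(x)=\e^{-(N-1)}\varphi(x/\e)$. Given $f$ in the left-hand side, consider the splitting $f=(f-f*\varphi_\e)+f*\varphi_\e$. This splitting lives in the couple $(\cX,\cY)$: if $f_n\in C^\infty_c(\R^{N-1})$ with $f_n\to f$ in $C^{\b-\s}$, then $f_n*\varphi_\e\in C^\infty_c(\R^{N-1})$ and $f_n*\varphi_\e\to f*\varphi_\e$ in $C^{1+\b}$ by the smoothing estimate below, so $f*\varphi_\e\in\cY$ and hence $f-f*\varphi_\e\in\cX$. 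The standard mollifier estimates give, for $\e\in(0,1]$,
\[
\|f-f*\varphi_\e\|_{C^{\b-\s}}\le C\,\e^{(\g_\rho-\s)-(\b-\s)}\|f\|_{C^{\g_\rho-\s}}=C\,\e^{\rho(1+\s)}\|f\|_{C^{\g_\rho-\s}}
\]
and
\[
\|f*\varphi_\e\|_{C^{1+\b}}\le C\,\e^{(\g_\rho-\s)-(1+\b)}\|f\|_{C^{\g_\rho-\s}}=C\,\e^{(\rho-1)(1+\s)}\|f\|_{C^{\g_\rho-\s}},
\]
the exponents being admissible precisely because $\b-\s\le\g_\rho-\s\le1+\b$. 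Choosing $\e=t^{1/(1+\s)}$ balances the two contributions and yields $K(t,f;\cX,\cY)\le\|f-f*\varphi_\e\|_{C^{\b-\s}}+t\|f*\varphi_\e\|_{C^{1+\b}}\le C\,t^\rho\|f\|_{C^{\g_\rho-\s}}$ for $t\in(0,1]$, while the trivial splitting $f=f+0$ gives $K(t,f;\cX,\cY)\le\|f\|_{\cX}\le C\|f\|_{C^{\g_\rho-\s}}$ for $t\ge1$. Hence $\sup_{t>0}t^{-\rho}K(t,f;\cX,\cY)\le C\|f\|_{C^{\g_\rho-\s}}$, i.e. $f\in(\cX,\cY)_{\rho,\infty}=\mathcal{D}_{B_0}(\rho,\infty)$ with the bound \eqref{eq:int-incl}. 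Note that this step uses only $\rho\in(0,1)$ and not the non-integrality of $\g_\rho-\s$.

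Next I would prove the reverse inclusion under the extra hypothesis $\g_\rho-\s\notin\N$. The isometric inclusions $\cC^{\b-\s}_0(\R^{N-1})\hookrightarrow C^{\b-\s}(\R^{N-1})$ and $\cC^{1+\b}_0(\R^{N-1})\hookrightarrow C^{1+\b}(\R^{N-1})$, together with the functoriality of the real interpolation functor, give a bounded inclusion $\mathcal{D}_{B_0}(\rho,\infty)=(\cC^{\b-\s}_0,\cC^{1+\b}_0)_{\rho,\infty}\hookrightarrow(C^{\b-\s},C^{1+\b})_{\rho,\infty}$; by the classical description of interpolation spaces between H\"older spaces over $\R^{N-1}$ (see \cite{lunardi2012analytic}), and since $\b-\s$, $1+\b$ and $\g_\rho-\s$ are all non-integers, the latter space equals $C^{(1-\rho)(\b-\s)+\rho(1+\b)}(\R^{N-1})=C^{\g_\rho-\s}(\R^{N-1})$. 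Combined with the inclusion $\mathcal{D}_{B_0}(\rho,\infty)\subset\cC^{\b-\s}_0(\R^{N-1})$ from the first paragraph, this yields a bounded inclusion $\mathcal{D}_{B_0}(\rho,\infty)\hookrightarrow\cC^{\b-\s}_0(\R^{N-1})\cap C^{\g_\rho-\s}(\R^{N-1})$, which together with \eqref{eq:int-incl} proves the asserted identity with equivalence of norms; both spaces are Banach, the intersection being a closed subspace of $C^{\g_\rho-\s}(\R^{N-1})$ because convergence in $C^{\g_\rho-\s}$ implies convergence in $C^{\b-\s}$ and $\cC^{\b-\s}_0(\R^{N-1})$ is closed in $C^{\b-\s}(\R^{N-1})$.

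The main obstacle I anticipate is the careful bookkeeping with the little-H\"older subspaces $\cC^\theta_0(\R^{N-1})$: one must check that the mollification splitting genuinely takes values in the couple $(\cC^{\b-\s}_0,\cC^{1+\b}_0)$ — not merely in $(C^{\b-\s},C^{1+\b})$ — so that the $K$-functional is computed for the correct couple, and one must keep track of the fact that in the borderline case $\g_\rho-\s\in\N$ only the one-sided inclusion $\cC^{\b-\s}_0(\R^{N-1})\cap C^{\g_\rho-\s}(\R^{N-1})\subset\mathcal{D}_{B_0}(\rho,\infty)$ survives, since then $(C^{\b-\s},C^{1+\b})_{\rho,\infty}$ is a Zygmund space strictly larger than $C^{\g_\rho-\s}(\R^{N-1})$. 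The remaining ingredients — the mollifier estimates, the interpolation inequality for H\"older norms (recalled in a special case as \eqref{eq:interpol}), and the classical interpolation identity for H\"older spaces — are standard and can be quoted.
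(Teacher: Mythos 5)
Your proof is correct, and it takes a genuinely different route from the paper's. Both arguments use Remark~\ref{intermediate-spaces-remark} to free themselves from the particular generator $B_0$, and both prove the reverse inclusion $\mathcal{D}_{B_0}(\rho,\infty)\hookrightarrow\cC^{\b-\s}_0\cap C^{\g_\rho-\s}$ by appeal to the classical interpolation identity for H\"older spaces (the paper cites \cite[Proposition 2.2.2]{lunardi2012analytic} and \cite[Corollary 1.2.18]{lunardi2012analytic}). The difference is in the forward inclusion. You stay entirely within the abstract real-interpolation framework: you bound the $K$-functional of the little-H\"older couple $(\cC^{\b-\s}_0,\cC^{1+\b}_0)$ by a mollification splitting $f=(f-f*\varphi_\e)+f*\varphi_\e$, choose $\e=t^{1/(1+\s)}$ to balance, and obtain $K(t,f)\lesssim t^\rho\|f\|_{C^{\g_\rho-\s}}$. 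The paper instead exploits generator-independence concretely by taking $B_0=(-\D)^{(1+\s)/2}$ (which generates an analytic semigroup on $\cC^{\b-\s}_0$ by \cite[Corollary 2.17]{abels2009cauchy}), writes out $t(-\D)^{(1+\s)/2}e^{t(-\D)^{(1+\s)/2}}f$ as convolution against the even kernel $(-\D)^{(1+\s)/2}P$, and estimates that expression directly in $C^{\b-\s}$ using the pointwise decay bounds for $P$ from \cite{BGet} and for $(-\D)^{(1+\s)/2}P$ from \cite{Fall-Weth}. Your argument is more elementary and more self-contained: it avoids the fractional heat kernel and its decay estimates entirely and only needs the standard Jackson-type mollifier bounds. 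The paper's argument buys nothing extra here, though the explicit heat-kernel representation is in the same spirit as the kernel estimates the paper relies on elsewhere (Lemma~\ref{SemigroupLemma001}). You also handle the book-keeping with $\cC^\theta_0$ correctly — in particular the check that $f*\varphi_\e\in\cC^{1+\b}_0$ via approximation by $f_n*\varphi_\e$ with $f_n\in C^\infty_c$ is exactly what is needed to place the splitting in the right couple — and you correctly flag that the borderline $\g_\rho-\s\in\N$ only loses the reverse inclusion, matching the statement.
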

\begin{proof}
By \cite[Corollary 2.17]{abels2009cauchy}, for $\s\in (-1,1)$,  the standard fractional Laplacian $  (-\D)^{\frac{1+\s}{2}}: \cC^{1+\b}_0 (\mathbb{R}^{N-1})\to \cC^{\b-\s}_0(\mathbb{R}^{N-1})$  is a  generator of a strongly continuous analytic semigroup, and therefore 
\be\label{eq:DB0-DmbL}
 \mathcal{D}_{B_0}(\rho,\infty)  = \mathcal{D}_{(-\D)^{\frac{1+\s}{2}}}(\rho,\infty)  ,
\ee
with equivalence of their respective norms,  see Remark~\ref{intermediate-spaces-remark}, .

For simplicity, we write $\g=\g_\rho$.
Letting $f\in  \cC^{\b-\s}_0(\mathbb{R}^{N-1})\cap C^{\g-\s}({\R^{N-1}})$, we then  have 
\be \label{eq:uxt}
e^{t (-\D)^{\frac{1+\s}{2}}}f(x)=  K(\cdot, t)\star f (x)=\int_{\R^{N-1}} K(x-y, t) f(y) dy,
\ee
where $K$ is the heat kernel of $(-\D)^{\frac{1+\s}{2}}$. 
It is known, see e.g. \cite{BGet},  that
$$
K(x,t)=t^{-\frac{N-1}{1+\s}}P(t^{-\frac{1}{1+\s}}x),  
$$
for some    radially  symmetric function  $P\in C^\infty(\R^{N-1})$, with
$$
  \qquad |D^k P(y)| \leq \frac{C(k,N,\s)}{1+|y|^{N+\s+k}}.
$$
From this,  we can apply  \cite[Lemma 2.1]{Fall-Weth} to get
\be \label{eq:Fall-Weth}
|(-\D)^{\frac{1+\s}{2}} P (z)|\leq  \frac{C}{1+|z|^{N+\s}}.
\ee
We have, using that $\int_{\R^{N-1}}(-\D)^{\frac{1+\s}{2}} P(z)\, dz=0$ and a change of variables, 
\begin{align*}
H_t(x)&:=t (-\D)^{\frac{1+\s}{2}} (e^{t (-\D)^{\frac{1+\s}{2}}}f)(x)=t \int_{\R^{N-1}}  (-\D)^{\frac{1+\s}{2}} K(x-y, t) f(y)\,  dy\\
&= t^{-\frac{N-1}{1+\s}}  \int_{\R^{N-1}} ( (-\D)^{\frac{1+\s}{2}} P) (t^{-\frac{1}{1+\s}}(x-y)) f(y)\, dy\\
&= \int_{\R^{N-1}} (-\D)^{\frac{1+\s}{2}} P (z) f(x-t^{\frac{1}{1+\s}} z)\, dz=  \int_{\R^{N-1}}  (-\D)^{\frac{1+\s}{2}} P (z) [f(x-t^{\frac{1}{1+\s}} z)-f(x)]\, dz.
\end{align*}
Now using that $ (-\D)^{\frac{1+\s}{2}} P  $ is even, we conclude that  
\be\label{eq:H-tx-even}
H_t(x)=  \frac{1}{2} \int_{\R^{N-1}} (-\D)^{\frac{1+\s}{2}} P (z) [f(x-t^{\frac{1}{1+\s}} z)+f(x+t^{\frac{1}{1+\s}} z)-2f(x)]\, dz.
\ee
Suppose that $\g-\b \leq 1$.
We then deduce,  from  that 
\begin{align}\label{eq:L-infty-boundHt}
|H_t(x) |\leq   t^{\frac{\g-\b}{1+\s}}  C [f]_{C^{\g-\b}}\int_{\R^{N-1}}  |z|^{\g-\b} | (-\D)^{\frac{1+\s}{2}} P (z)| \, dz .
\end{align}
 If also $\g-\b>1$ then since 
\begin{align*}
|[f(x-t^{\frac{1}{1+\s}} z)&+f(x+t^{\frac{1}{1+\s}} z)-2f(x)] |\leq 2  [\n f]_{C^{\g-\b-1} } t^{\frac{1}{1+\s}}| z |  \min (1, (t^{\frac{1}{1+\s}}  |z|)^{\g-\b-1}),
\end{align*}
we  still have \eqref{eq:L-infty-boundHt}. As a consequence 
\begin{align}\label{eq:L-infty-boundHt-ok}
  \|H_t\|_{L^\infty}\leq   C t^{\rho} \|f\|_{C^{\g-\s}} \qquad\textrm{for  all $\rho\in (0,\min(1,\frac{1}{1+\s})).$}
\end{align}
To proceed, we start with the case $\g-\s\leq 1$. Then
 for $x\not=x'\in\R^{N-1}$, by \eqref{eq:Fall-Weth} and the fact that $0<\g-\b=\rho(1+\s)<1+\s$,   we have 
\begin{align*}
&|H_t(x)- H_t(x')|\leq  C [f]_{C^{\g-\s}} \int_{\R^{N-1}} | (-\D)^{\frac{1+\s}{2}} P (z)| \min (|x-x'|, t^{\frac{1}{1+\s}}  |z|)^{\g-\s} \, dz\\
&\leq   C [f]_{C^{\g-\s}}   t^{\frac{\g-\s}{1+\s}}   \int_{ |z|\leq  t^{-\frac{1}{1+\s}}  |x-x'|} |z|^{\g-\s} |  (-\D)^{\frac{1+\s}{2}}P (z)|    \, dz\\\
&+   C [f]_{C^{\g-\s}}  |x-x'|^{\g-\s}  \int_{  |z|\geq   t^{\frac{-1}{1+\s}} |x-x'|} |(-\D)^{\frac{1+\s}{2}} P (z)|  \, dz\\
&\leq    C [f]_{C^{\g-\s}}  t^{\frac{\g-\s}{1+\s}}  t^{-\frac{\b-\s}{1+\s}}  |x-x'|^{\b-\s}  \int_{\R^{N-1} } |z|^{\g-\b} | (-\D)^{\frac{1+\s}{2}}P (z)|    \, dz\\
&+ C [f]_{C^{\g-\s}}   |x-x'|^{\g-\s}  \int_{  |z|\geq   t^{\frac{-1}{1+\s}} |x-x'|}\frac{1}{1+  |z|^{N-1+(\g-\b)}} \, dz\\
&\leq   C [f]_{C^{\g-\s}} \left(  t^{\frac{\g-\b}{1+\s}}  |x-x'|^{\b-\s}  +   |x-x'|^{\g-\s} ( t^{\frac{-1}{1+\a}} |x-x'|)^{-(\g-\b)} \right)\\
&\leq C [f]_{C^{\g-\s}}     t^{\frac{\g-\b}{1+\s}}  |x-x'|^{\b-\s}.
\end{align*}
This clearly  implies  that  
$$
\sup_{t\in (0,1)}  t^{-\rho} [H_t]_{C^{\b-\s} }\leq C [f]_{C^{\g-\s}(\mathbb{R}^{N-1})}.
$$ 
As a consequence, using also \eqref{eq:L-infty-boundHt-ok}, we have  
\be \label{eq:L-infty-bde-s1} 
\sup_{t\in (0,1)}  t^{-\rho} \|H_t\|_{C^{\b-\s} }\leq C  \| f\|_{C^{\g-\s} } \qquad\textrm{ for $\g-\s\leq 1.$}
\ee
We now assume that $   \g-\s>1$ and we observe  that from the upper bounds of $\rho$ and $\b$ we have $\g-\s\leq 2$. 
Therefore (recalling \eqref{eq:H-tx-even})    using that
\begin{align*}
|[f(x-t^{\frac{1}{1+\s}} z)&+f(x+t^{\frac{1}{1+\s}} z)-2f(x)]-[f(x'-t^{\frac{1}{1+\s}} z)+f(x'+t^{\frac{1}{1+\s}} z)-2f(x')]|\\
&\leq 2  [\n f]_{C^{\g-\s-1} } t^{\frac{1}{1+\s}}| z |  \min (|x-x'|, (t^{\frac{1}{1+\s}}  |z|))^{\g-\s-1}
\end{align*}
   and the same argument as above,  we  get 
$$ 
\sup_{t\in (0,1)}  t^{-\rho} [H_t]_{C^{\b-\s} }\leq C  [\n f]_{C^{\g-\s-1} }.
$$
From this,   \eqref{eq:L-infty-boundHt-ok}  and \eqref{eq:L-infty-bde-s1},  we conclude    that for all $\rho\in (0,\min(1,\frac{1}{1+\s}))$,
$$
\|f\|_{\mathcal{D}_{(-\D)^{\frac{1+\s}{2}}}(\rho,\infty) }=\|f\|_{C^{\b-\s}}+\sup_{t\in (0,1)}  t^{-\rho} \|H_t\|_{C^{\b-\s} }\leq C \|f\|_{C^{\g-\s} }.
$$
Thanks to \eqref{eq:DB0-DmbL} and \eqref{eq:L-infty-boundHt} we obtain
$$
\|f\|_{\mathcal{D}_{B_0}({\rho},\infty) } \leq C \|f\|_{C^{\g-\s} }.
$$
Therefore $ \cC^{\b-\s}_0(\mathbb{R}^{N-1})\cap C^{\g_{\rho}-\s}({\R^{N-1}})\subset \mathcal{D}_{B_0}(\rho,\infty) $ and     \eqref{eq:int-incl} holds. \\
Next,  by  \cite[Proposition 2.2.2]{lunardi2012analytic} and \cite[Corollary 1.2.18]{lunardi2012analytic} we have that  $\mathcal{D}_{B_0}(\rho,\infty)$ is continuously embedded in $C^{\g_\rho-\s}(\mathbb{R}^{N-1}) $, provided $\g_\rho-\s\not\in\N$. Since, by definition $\mathcal{D}_{B_0}(\rho,\infty)\subset \cC^{\b-\s}_0(\mathbb{R}^{N-1})$, we get the desired result.

\end{proof}
\begin{remark}
We point out that      Proposition \ref{prop:car-inerpol-space} still holds when $\b\in \{0,1\}$ and   $\g_\rho-\s=1$, provided that, for $k\in \{1,2\}$, the spaces  $C^{k}(\mathbb{R}^{N-1})$ and $\cC^{k}_0(\mathbb{R}^{N-1})$ are, respectively, replaced with the H\"older-Zygmund space $\cC^{k}(\mathbb{R}^{N-1})$ and the space $\cC^{k}_0(\mathbb{R}^{N-1})=\ov{ C^\infty_c(\R^{N-1}) }^{\|\cdot\|_{\cC^{k}}}$. Recall that the  space $\cC^{1}(\mathbb{R}^{N-1})$ is defined by 
\begin{align}
\cC^1 (\mathbb{R}^{N-1}):=\bigg\{u\in C_b(\mathbb{R}^{N-1}):\,[u]_{\cC^{1 } }:=\sup_{\stackrel{x,y\in\mathbb{R}^{N-1}}{x\neq y}}\frac{|u(x)-2u(\frac{x+y}{2})+u(y)|}{|x-y|}<\infty\bigg\}
\end{align}
and  $\cC^2 (\mathbb{R}^{N-1})$ is given by the set of $u\in \cC^1 (\mathbb{R}^{N-1})$ such that $\de_i u\in \cC^1 (\mathbb{R}^{N-1}) $ for $i=1,\dots, N-1$.
\end{remark}

\subsection{A class of nonlocal operators generating strongly continous analytic semigroups}\label{ss:general-nonloc}
	In this section, we consider a class of nonlocal operators which  we prove to generate strongly continuous analytic semigroups.\\
	 For fixed $\g\in (\a,1)$,  we consider linear nonlocal operator $	\mbL_K: C^{1+\g}(\R^{N-1})\to C^{\g-\a}(\R^{N-1})$ given by 
         \be\label{eq:class-op}
         \mbL_Ku(x)=P.V.\int_{\R^{N-1}}\frac{u(x)-u(y)}{|x-y|^{N+\a}}K(x,y)\, dy,
\ee
	where $K:\big(\mathbb{R}^{N-1}\times\mathbb{R}^{N-1}\big)\setminus\{(x,x):x\in\mathbb{R}^{N-1}\}\rightarrow\mathbb{R}$ is a measurable function satisfying the following assumptions.
\begin{assumptions}\label{ass:L}$ $
{\rm
	\begin{enumerate}
	\item[(i)] $K(x,y)=K(y,x)$ for all $x,y\in \R^{N-1}$, $x\neq y$.
	\item[(ii)] $\frac{1}{\k}\leq K(x,y)\leq \k$,  for some $\k>1$.
	\item[(iii)] there exists $A_K\in C^\g(\R^{N-1}\times [0,\infty)\times S^{N-2})$, with $A_K(x,0,\th)=A_K(x,0,-\th)$ such that  $  A_{K}(x,r,\th)=K(x,x-r\th)$ for all $r>0$ and $\|A_K\|_{C^\g(\R^{N-1}\times [0,\infty)\times S^{N-2})}\leq \k$.
	\end{enumerate}
	}
	\end{assumptions}
	In order to apply the estimates in Lemma~\ref{SemigroupLemma001}, it will be useful to decompose $\mbL_K$ in two parts, writing   
	\be\label{eq:op-mbL-polar}
 \mbL_Ku(x)=\int_{S^{N-2}}\int_0^\infty\frac{\d_eu(x,r,\th)}{r^{2+\alpha}} {A}_K^e(x,r,\theta)drd\theta+	 \int_{S^{N-2}}\int_0^\infty\frac{\d_o u(x,r,\th)}{r^{2+\alpha}} {A}_K^o(x,r,\theta)drd\theta,
	\ee
for $u \in C^{1+\g}(\R^{N-1})$ with 
		$$
		 {A}_K^e(x,r,\theta)	 = A_K(x,r,\theta)+A_K(x,r,-\theta),   \qquad
		 {A}_K^o(x,r,\theta)	 = A_K(x,r,\theta)-A_K(x,r,-\theta)
		$$
                and $\delta_e u$, $\delta_o u$ defined as in \eqref{eq:def-deeo}. We now define the set 
\be
\calO_{\nu}^\g:=\{ u\in C^{1+\gamma}_{loc}(\mathbb{R}^{N-1})\,:\, \|\n u\|_{C^\g}\leq \nu \},
\ee
for $\nu>0$, and we state the following estimates.
		\begin{lemma}\label{lem:mbL-graph-norm}
			\begin{enumerate}
				\item Let $\g>\a$. Then there exist $C,C'>0$ depending only on $N,\alpha,\gamma,\kappa$  such that
				\be \label{eq:norm-nu-mbl}
				\| \mbL_K u\|_{ C^{\gamma-\alpha} }\leq C'\|\n u\|_{C^\g}  \qquad \text{for all $u\in \calO_{\nu}^\g$}      
				\ee
				and
				\be  \label{eq:semi-norm-nu-mblllll}
				\|\n u\|_{ C^{\g}  }\leq C(\| \mbL_K u\|_{ C^{\g-\alpha} }+ \|\n u\|_{ L^{\infty}} ) \qquad \text{for all $u\in \calO_{\nu}^\g$.}
				\ee
			\item	If  $\b\in (\a,\g)$, then  we have $\mbL_K u \in \cC^{\b-\alpha}_0 (\mathbb{R}^{N-1})$ for all $ u\in \cC^{1+\b}_0 (\mathbb{R}^{N-1})\cap\calO_\nu^\g$.
			\end{enumerate}
	\end{lemma}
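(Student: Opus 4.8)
\emph{Proof plan.} Everything is built on the polar representation \eqref{eq:op-mbL-polar}, which writes $\mbL_K u = I_e u + I_o u$, where $I_e$ and $I_o$ are the operators of Lemma~\ref{SemigroupLemma001} with weights $\mu = A_K^e$ and $\nu = A_K^o$, respectively. By Assumption~\ref{ass:L}(iii), $A_K^e$ and $A_K^o$ belong to $C^\gamma(\R^{N-1}\times[0,\infty)\times S^{N-2})$ with norm at most $2\kappa$ — so $A_K^e$ also belongs to $C^{\gamma-\alpha}$ — and $A_K^o(\cdot,0,\cdot)\equiv 0$, since $A_K(\cdot,0,\theta)=A_K(\cdot,0,-\theta)$; hence the hypotheses of Lemma~\ref{SemigroupLemma001} hold for both weights. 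The upper bound \eqref{eq:norm-nu-mbl} follows at once: for $u\in\calO_\nu^\gamma$ one has $\nabla u\in C^\gamma(\R^{N-1})$, so \eqref{eq:Ie} and \eqref{eq:Io} yield $\|I_e u\|_{C^{\gamma-\alpha}}\le C\kappa\|\nabla u\|_{C^\gamma}$ and $\|I_o u\|_{C^{\gamma-\alpha}}\le C\kappa\|\nabla u\|_{L^\infty}\le C\kappa\|\nabla u\|_{C^\gamma}$, and adding these gives \eqref{eq:norm-nu-mbl} with $C'=C'(N,\alpha,\gamma,\kappa)$. Part~(2) is equally direct: if $u\in\cC^{1+\beta}_0(\R^{N-1})\cap\calO_\nu^\gamma$ with $\beta\in(\alpha,\gamma)$, then Lemma~\ref{SemigroupLemma001}(iii) applies to $I_e u$ and $I_o u$, giving $I_e u, I_o u\in\cC^{\beta-\alpha}_0(\R^{N-1})$ and hence $\mbL_K u\in\cC^{\beta-\alpha}_0(\R^{N-1})$.

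The substantive point is the reverse bound \eqref{eq:semi-norm-nu-mblllll}, a Schauder-type a priori estimate, which I would prove by the freezing-coefficients method carried out directly in the classical H\"older spaces. First reduce to the even part: \eqref{eq:Io} already gives $\|I_o u\|_{C^{\gamma-\alpha}}\le C\kappa\|\nabla u\|_{L^\infty}$, so it suffices to show $\|\nabla u\|_{C^\gamma}\le C(\|I_e u\|_{C^{\gamma-\alpha}}+\|\nabla u\|_{L^\infty})$ for $u\in\calO_\nu^\gamma$. For fixed $x_0\in\R^{N-1}$, let $L^{x_0}$ be the translation- and scale-invariant operator with kernel $|y|^{-N-\alpha}A_K(x_0,0,y/|y|)$, equivalently $L^{x_0}v(x)=\int_{S^{N-2}}\int_0^\infty r^{-2-\alpha}\,\delta_e v(x,r,\theta)\,2A_K(x_0,0,\theta)\,dr\,d\theta$. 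By Assumption~\ref{ass:L}(ii), $L^{x_0}$ is a uniformly elliptic nonlocal operator of order $1+\alpha$ (its Fourier symbol is $|\xi|^{1+\alpha}$ times a positive function of $\xi/|\xi|$ pinched between two constants depending only on $\kappa,\alpha,N$ — and smooth enough for a classical multiplier argument, the angular roughness of $A_K$ being regularized by the integration that produces the symbol); consequently $L^{x_0}$ obeys, uniformly in $x_0$, the estimate $\|\nabla v\|_{C^\gamma}\le C(\|L^{x_0}v\|_{C^{\gamma-\alpha}}+\|\nabla v\|_{L^\infty})$ together with an interior (localized) version of it. Writing $I_e u = L^{x_0}u + E^{(1)}u + E^{(2)}_{x_0}u$, with $E^{(1)}$ carrying the kernel weight $A_K^e(x,r,\theta)-A_K^e(x,0,\theta)$ and $E^{(2)}_{x_0}$ the weight $A_K^e(x,0,\theta)-2A_K(x_0,0,\theta)$: the first weight is $C^\gamma$, vanishes at $r=0$ and is $O(\min(1,r^\gamma))$, so bounding $\delta_e u$ and its increments in $x$ by $\|\nabla u\|_{L^\infty}\min(|x-z|,r)$ exactly as in the proof of Lemma~\ref{SemigroupLemma001} gives $\|E^{(1)}u\|_{C^{\gamma-\alpha}}\le C\kappa\|\nabla u\|_{L^\infty}$, a genuinely lower-order, harmless term; the second weight is $O(|x-x_0|^\gamma)$, hence small in $C^{\gamma-\alpha}$ on small balls, so after the customary rescaling and iteration over a dyadic family of balls it is absorbed into the left-hand side, and taking the supremum over $x_0$ yields \eqref{eq:semi-norm-nu-mblllll}.

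The technical heart is this last step: the freezing argument must be run globally on $\R^{N-1}$ with only $\nabla u$ (not $u$ itself) controlled in $L^\infty$ — which is handled by replacing $u$ near $x_0$ with $u$ minus its affine Taylor polynomial at $x_0$, whose image under $\mbL_K$ is controlled by $\|\nabla u\|_{L^\infty}$ by \eqref{eq:Io} — and the nonlocal tails and the constants must be tracked so that the final constant depends only on $N,\alpha,\gamma,\kappa$. A more economical route is to invoke the a priori estimate for the operator class \eqref{eq:class-op} established in \cite{abels2009cauchy} and then descend from its function-space setting to $\calO_\nu^\gamma$ by precisely this localization: multiply $u$ by a cutoff $\eta$, subtract the affine part of $u$ at the centre of $\supp\eta$, control the commutator $\mbL_K(\eta\,\cdot)-\eta\,\mbL_K(\cdot)$ via Lemma~\ref{SemigroupLemma001} as in the proof of Proposition~\ref{prop:mmp}, and remove the resulting spurious lower-order term with the interpolation inequality \eqref{eq:interpol}.
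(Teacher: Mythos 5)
Your treatment of the upper bound \eqref{eq:norm-nu-mbl} and of part (2) matches the paper exactly: both follow from the polar decomposition \eqref{eq:op-mbL-polar} together with Lemma~\ref{SemigroupLemma001}(i) and (iii), after noting that $A_K^e\in C^{\gamma-\alpha}$, $A_K^o\in C^\gamma$ with $A_K^o(\cdot,0,\cdot)=0$ by Assumption~\ref{ass:L}(iii). Nothing more is needed there.

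For the reverse Schauder bound \eqref{eq:semi-norm-nu-mblllll}, the paper does not run a freezing argument at all; it observes that $K(\cdot+x_0,\cdot+x_0)$ also satisfies Assumptions~\ref{ass:L} for every $x_0$, applies the interior a priori estimate of \cite[Theorem~1.3(ii), Theorem~1.4(iii)]{fall2020regularity} to $u-u(x_0)$ on $B_2(x_0)$ (with the nonlocal tail controlled by $\|\nabla u\|_{L^\infty}$), and then takes the supremum over $x_0$. Your sketch reconstructs essentially what such a Schauder result proves internally, so it is not wrong in spirit, but it contains a genuine gap at its central step: the claim that the frozen operator $L^{x_0}$ obeys the $C^{1+\gamma}$-to-$C^{\gamma-\alpha}$ estimate ``by a classical multiplier argument, the angular roughness of $A_K$ being regularized by the integration that produces the symbol.'' Integrating in $r$ yields the symbol
$c_\alpha\,|\xi|^{1+\alpha}\int_{S^{N-2}}\big|\theta\cdot\tfrac{\xi}{|\xi|}\big|^{1+\alpha}A_K(x_0,0,\theta)\,d\theta$, and since $t\mapsto|t|^{1+\alpha}$ is only $C^{1,\alpha}$, the angular factor inherits at best $C^{1,\alpha}$-smoothness on $S^{N-2}$ no matter how regular $A_K$ is. That is far from the $\sim (N-1)/2$ derivatives demanded by a Mikhlin–H\"ormander argument once $N$ is moderately large, so the ``classical multiplier'' route does not close. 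You need to invoke a Schauder estimate for stable-like operators with merely bounded/H\"older spectral density — e.g., \cite{bass2009regularity} or, matching the paper, \cite{fall2020regularity}. Your suggested fallback of citing \cite{abels2009cauchy} is in the right direction but imprecise: that reference supplies generation and resolvent estimates in little-H\"older spaces rather than the localized estimate of the form \eqref{eq:semi-norm-nu-mblllll} with only $\|\nabla u\|_{L^\infty}$ on the right; \cite{fall2020regularity} delivers exactly the form the paper needs and lets one bypass the entire freezing/commutator machinery you propose.
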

		
	\begin{proof}
(i) It is clear from Lemma \ref{SemigroupLemma001} and \eqref{eq:op-mbL-polar} that  \eqref{eq:norm-nu-mbl} holds.\\
			Now  we  let $f(x)=\mbL_K u(x)$.  Since $K(\cdot+x_0, \cdot+x_0)$ satisfies also Assumptions \ref{ass:L} for all $x_0\in \R^{N-1}$,  by  \cite[Theorem 1.3$(ii)$, Theorem 1.4$(iii)$]{fall2020regularity}, we get 
			\begin{align*}
		\|\nabla u\|_{C^\g(B_1(x_0))}&=	\|\n (u-u(x_0))\|_{C^\g(B_1(x_0))}\\&\leq C\left(\| f\|_{C^{\g-\a}(B_2(x_0))}+ \| u-u(x_0)\|_{L^\infty(B_2(x_0))} +\int_{\R^{N-1}}\frac{|u(x)-u(x_0)|}{1+|x-x_0|^{N+\a}}\, dy\right) \\
			&\leq  C\left(\| f\|_{ C^{\gamma-\alpha} }+ \| \n u\|_{L^\infty}+\int_{|x-x_0|\geq 2}\frac{|u(x)-u(x_0)|}{|x-x_0|^{N+\a}}\, dy\right)\\&\leq  C\left(\| f\|_{ C^{\gamma-\alpha} }+ \| \n u\|_{L^\infty}+\| \n u\|_{L^\infty}\int_{|x-x_0|\geq 2}\frac{1}{|x-x_0|^{N-1+\a}}\, dy\right).
			\end{align*}
			From this, we then get 
			$$
			\|\n u\|_{C^\g(B_1(x_0))}\leq C\left(\| f\|_{ C^{\gamma-\alpha} }+  \|\n u\|_{L^\infty}  \right),
			$$
			where $C$ may change value  from one line to an other.
			Since $x_0$ is arbitrary,  \eqref{eq:semi-norm-nu-mblllll} follows.\\
                        (ii) By  Lemma \ref{SemigroupLemma001}  and \eqref{eq:op-mbL-polar} if  $ u\in \cC^{1+\b}_0(\mathbb{R}^{N-1}) $, then $\mbL_K u \in \cC^{\b-\alpha}_0 (\mathbb{R}^{N-1})$.
	\end{proof}
%

Combining Lemma~\ref{lem:mbL-graph-norm} and a result from \cite{abels2009cauchy}, we now exhibit a useful class of operators generating strongly continuous analytic semigroups in H\"older spaces.
	\begin{proposition}\label{lem:mbbL_analytic}
Let   $\g\in (\b,1)$ and $b\in {\cC^{\b-\alpha}_0}(\mathbb{R}^{N-1})$ be such that 
$$
\frac{1}{\k}\leq |b(x)|\leq  \k  \qquad\textrm{ for all $x\in \R^{N-1}$}.
$$  
Let $L:{\cC^{1+\b}_0}(\mathbb{R}^{N-1})\to {\cC^{\b-\alpha}_0}(\mathbb{R}^{N-1})$ be a bounded linear operator satisfying for all $\e>0$, there exists $c_\e$ such that 
	\begin{align}\label{eq:L-perturb-mbL}
		\|Lu\|_{C^{\b-\alpha}} \leq \varepsilon\|\mbL_K u\|_{C^{\b-\alpha}}+c_\e\|u\|_{C^{\b-\alpha}}\;\;\;\mbox{ for all }u\in \cC^{1+\b}_0(\mathbb{R}^{N-1}).
		\end{align}
Suppose that for all $k\in\N $,
\be \label{eq:smooth-even}
 \sup_{\th\in S^{N-2}, }\|\n^k_\th A_K^e(\cdot,0,\th)\|_{C^\g}\leq C_k.
\ee	
Then, the operator  	
\be \label{eq:bmbLKL}
 b(x)\mbL_K+L:\cC^{1+\b}_0(\mathbb{R}^{N-1})\to \cC^{\b-\alpha}_0(\mathbb{R}^{N-1}),
 \ee
  with domain ${\cC^{1+\b}_0} (\mathbb{R}^{N-1})$,   generates a strongly continuous   analytic semigroup on ${\cC^{\b-\alpha}_0}(\mathbb{R}^{N-1})$.
	\end{proposition}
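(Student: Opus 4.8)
The plan is to combine the generation theorem for a class of non-smooth integro-differential operators from \cite{abels2009cauchy} with a standard perturbation argument for sectorial operators, using the polar-coordinate estimates of Lemma~\ref{SemigroupLemma001} and the angular smoothness \eqref{eq:smooth-even} to cast $b\,\mbL_K$ in the form covered by \cite{abels2009cauchy}. I begin with a few reductions. Since $b$ is continuous with $|b|\ge\tfrac1\k$, it has constant sign, and we may assume $b>0$ (so that $b\,\mbL_K$ is the relevant positive-type operator). Then $\tfrac1\k\le b\le\k$ and, because $b\in C^{\b-\a}(\R^{N-1})$ is bounded away from $0$, also $\tfrac1b\in C^{\b-\a}(\R^{N-1})$; hence multiplication by $b$ and by $\tfrac1b$ are bounded on $C^{\b-\a}(\R^{N-1})$, so that
$$
\tfrac1C\,\|\mbL_K u\|_{C^{\b-\a}}\;\le\;\|b\,\mbL_K u\|_{C^{\b-\a}}\;\le\;C\,\|\mbL_K u\|_{C^{\b-\a}}\qquad\text{for all }u\in\cC^{1+\b}_0(\R^{N-1}),
$$
with $C$ depending only on $\k$ and $\|b\|_{C^{\b-\a}}$. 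By Lemma~\ref{SemigroupLemma001}(iii) applied to the two summands of the polar decomposition \eqref{eq:op-mbL-polar}, $\mbL_K$ maps $\cC^{1+\b}_0(\R^{N-1})$ into $\cC^{\b-\a}_0(\R^{N-1})$; combined with the multiplicative properties of $\cC^{\b-\a}_0(\R^{N-1})$ and the hypothesis on $L$, this makes $b\,\mbL_K+L$ a well-defined bounded operator $\cC^{1+\b}_0(\R^{N-1})\to\cC^{\b-\a}_0(\R^{N-1})$.

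The central step is to show that $b\,\mbL_K$ with domain $\cC^{1+\b}_0(\R^{N-1})$ is sectorial on $\cC^{\b-\a}_0(\R^{N-1})$. I would regard $b\,\mbL_K$ as an operator of type \eqref{eq:class-op} with the (now non-symmetric) kernel $\widetilde K(x,y)=b(x)K(x,y)$, whose polar representation \eqref{eq:op-mbL-polar} carries the angular coefficients $b(x)A_K^e(x,r,\th)$ and $b(x)A_K^o(x,r,\th)$. Its principal even part has angular symbol $b(x)A_K^e(x,0,\th)$, which is: uniformly elliptic (by $\tfrac1\k\le b\le\k$ and Assumptions~\ref{ass:L}(ii)); even in $\th$ (by Assumptions~\ref{ass:L}(iii) and $b$ being independent of $\th$); $C^{\b-\a}$-H\"older in $x$; and smooth in $\th$ with all angular derivatives bounded in $C^\g(\R^{N-1})$ uniformly, by \eqref{eq:smooth-even}. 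This is precisely the structure for which \cite{abels2009cauchy} yields the generation of a strongly continuous analytic semigroup on $\cC^{\b-\a}_0(\R^{N-1})$ with domain $\cC^{1+\b}_0(\R^{N-1})$. The remaining contributions to $b\,\mbL_K$ --- the odd part, with coefficient $b\,A_K^o$, and the $r$-dependent remainder $b(x)\bigl(A_K^e(x,r,\th)-A_K^e(x,0,\th)\bigr)$, which vanishes like $r^\g$ as $r\to0$ --- are of strictly lower order: by Lemma~\ref{SemigroupLemma001}(i) the odd part of $\mbL_K u$ is controlled by $C\|\n u\|_{L^\infty}$ and the even remainder (its coefficient vanishing at $r=0$) by a norm of $\n u$ weaker than $C^\g$, so by the interpolation inequality \eqref{eq:interpol} they are perturbations of the principal part with relative bound zero and are absorbed by the perturbation mechanism inside \cite{abels2009cauchy}.

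Once $b\,\mbL_K$ is known to be sectorial --- hence closed, with $\cC^{1+\b}_0(\R^{N-1})$ a Banach space under the graph norm, equivalent to $\|\cdot\|_{C^{1+\b}}$ by the open mapping theorem and Lemma~\ref{lem:mbL-graph-norm} --- I would bring in $L$. Combining the relative-boundedness hypothesis \eqref{eq:L-perturb-mbL} with the two-sided estimate of the first step, for every $\e>0$ there is $c_\e>0$ such that $\|Lu\|_{C^{\b-\a}}\le\e\,\|b\,\mbL_K u\|_{C^{\b-\a}}+c_\e\|u\|_{C^{\b-\a}}$ for all $u\in\cC^{1+\b}_0(\R^{N-1})$; that is, $L$ is $(b\,\mbL_K)$-bounded with relative bound $0$. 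Choosing $\e$ small relative to the sectoriality constants of $b\,\mbL_K$, the standard perturbation theorem for sectorial operators (see, e.g., \cite[Chapter 2]{lunardi2012analytic}) gives that $b\,\mbL_K+L$ is sectorial with the same domain $\cC^{1+\b}_0(\R^{N-1})$. Since $\cC^{1+\b}_0(\R^{N-1})$ contains $C^\infty_c(\R^{N-1})$, which is dense in $\cC^{\b-\a}_0(\R^{N-1})$, the domain is dense, and a densely defined sectorial operator generates a strongly continuous analytic semigroup --- which is the assertion.

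The routine parts are the two perturbation steps (absorbing the lower-order kernel terms, and absorbing $L$) and the density of the domain. The hard part will be the verification that $b\,\mbL_K$ genuinely falls under \cite{abels2009cauchy}: one must check that the mere $C^{\b-\a}$-H\"older regularity of $b$ in $x$ is admissible there (rather than the $C^\g$ regularity available for $A_K$), that only the $\th\mapsto-\th$ symmetry of the principal part is used (full $x\leftrightarrow y$ symmetry of $\widetilde K$ being lost once $b$ is inserted), and that the anisotropic, $r$-dependent and odd contributions are, via Lemma~\ref{SemigroupLemma001}, genuine relative-bound-zero perturbations of the principal even part, so that the perturbation mechanism built into \cite{abels2009cauchy} indeed covers them.
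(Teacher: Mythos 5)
Your proposal follows essentially the same route as the paper's own proof: split $b\,\mbL_K$ via \eqref{eq:op-mbL-polar} into a principal piece governed by the even, $r$-frozen coefficient $b(x)A_K^e(x,0,\th)$ and a remainder (odd part plus the $r$-dependent remainder of $A_K^e$), feed the principal piece into \cite[Corollary 2.17]{abels2009cauchy} after a cutoff near the origin, then absorb the remainder and $L$ by relative-bound-zero perturbation using Lemma~\ref{SemigroupLemma001}, Lemma~\ref{lem:mbL-graph-norm}(i) and the interpolation inequality \eqref{eq:interpol}. The only organizational difference is that you do two perturbation steps (first absorb the lower-order kernel terms to get sectoriality of $b\,\mbL_K$, then absorb $L$), while the paper bundles the lower-order kernel remainder $L_2$ and $L$ into a single relative-bound-zero perturbation of the principal part $L_1$ and invokes \cite[Theorem 2.1]{pazy2012semigroups} once; this is immaterial.

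A couple of small remarks. Your observation that $1/b\in C^{\b-\a}$ and hence multiplication by $b$ is a bounded automorphism of $C^{\b-\a}$, giving the two-sided comparison $\|\mbL_K u\|\sim\|b\,\mbL_K u\|$, is used implicitly by the paper (in the chain of inequalities bounding $\|L_2u\|$ by $\|(L_1+L_2)u\|$ via Lemma~\ref{lem:mbL-graph-norm}) and it is good that you make it explicit. Your reduction ``assume $b>0$'' should not be read as a genuine reduction (if $b<0$ the operator points in the wrong direction and would not generate an analytic semigroup in the paper's sign convention); the statement tacitly presupposes constant positive sign, which holds in the application where $b=\sqrt{1+|\nabla u_0|^2}$. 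Finally, the subtlety you flag at the end --- that $b$ is only $C^{\b-\a}$-H\"older while the estimates in the Abels--Kassmann verification are written as $C^\g$-bounds on the cut-off kernels $k_1,k_2$ --- is real and is the one nontrivial compatibility check; the paper's proof glosses over it, and confirming that the coefficient regularity required by \cite[Corollary 2.17]{abels2009cauchy} is indeed only $C^{\b-\a}$ (or that the $C^\g$ written in the paper's kernel estimates should read $C^{\b-\a}$) is the step that needs to be spelled out to make the proof airtight.
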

\begin{proof}
Recalling \eqref{eq:op-mbL-polar}, we write  
\be 
b(x)\mbL_K:= L_1+ L_2,
\ee
where 
\begin{align*}
L_1u(x)&=b(x) \Bigl(P.V.\displaystyle\int_{\mathbb{R}^{N-1}}\frac{(u(x)-u(x+y))}{|y|^{N+\alpha}}  A_K^e(x,0,y/|y|)\;dy\Bigr)\\
&=\frac{b(x)}{2} \int_{\mathbb{R}^{N-1}}\frac{(2u(x)-u(x+y)-u(x-y))}{|y|^{N+\alpha}}  A_K^e(x,0,y/|y|)\;dy
\end{align*}
and 
$$L_2u(x)=\displaystyle\int_{\mathbb{R}^{N-1}}\frac{(u(x)-u(x+y))}{|y|^{N+\alpha}} b(x)[A_K^o(x,|y|,y/|y|)+ A_K^e(x,|y|,y/|y|)-A_K^e(x,0,y/|y|)]\;dy.$$ 
By  Lemma \ref{SemigroupLemma001},   the operator   $L_1:{\cC^{1+\b}_0}(\mathbb{R}^{N-1})\to {\cC^{\b-\alpha}_0}(\mathbb{R}^{N-1})$ is bounded.\\

%
%
%
Letting  $k(x,y):=b(x)A_K^e(x,0,y/|y|)|y|^{-N-\alpha}$ and $\chi \in C^\infty_c(\R^{N-1})$ be nonnegative,  radially symmetric and  satisfy $\chi=1$ for $|y|\leq 2$. One can split $k $ as $k =k_1+k_2$, where $k_ 1(x,y)=k (x,y)\chi(y)$ and $k_2(x,y)=k (x,y)(1-\chi(y))$. Then,  it is easy to see that $k_ 1=0$ for $|y|\geq 2$ and  $k_2=0$ for $|y|\leq 1$. In addition, using also \eqref{eq:smooth-even}, for all $n\in\mathbb{N}^{N-1}$ with  $|n|\leq N$,   there exists $C>0,c>0$ only  depending on $N,\alpha,\g,n$ and $\k$ such that
	\begin{equation}
		\left\{
		\begin{aligned}
		&\|\partial_y^nk_ 1(\cdot,y)\|_{C^{\g} }\leq C|y|^{-N-\a-|n|},\qquad 0<|y|\leq 2,\\
	&  k_ 1(x,y)\leq C |y|^{-N-\alpha}, \qquad \; 0<|y|\leq 1,\; x\in\mathbb{R}^{N-1},\\
&\|k_ 2(\cdot,y)\|_{C^{\g} }\leq C|y|^{-N-\alpha'},\qquad 0<|y|\leq 1,\;\;\mbox{ for }\;\;0\leq \alpha'<\alpha<1\\
	&\displaystyle	\int_{|y|\geq 1}\|k_ 2(\cdot,y)\|_{C^{\g} }\;dy <\infty,\\
 		&\lim\limits_{|y|\rightarrow\infty}\|k_ 2(\cdot,y)\|_{C^{\g} }=0.
		\end{aligned}
		\right.
	\end{equation}
In view of this, we can apply  \cite[Corollary 2.17]{abels2009cauchy}, to deduce that, for  $\b\in (\a,\g)$, the operator   $L_1:{\cC^{1+\b}_0}(\mathbb{R}^{N-1})\to {\cC^{\b-\alpha}_0}(\mathbb{R}^{N-1})$ generates a strongly continuous analytic semigroup.\\
By  Lemma \ref{SemigroupLemma001} and  \eqref{eq:interpol}, for all $\varepsilon>0$, there exists  $C_2(\varepsilon)>0$ such that
			\begin{align*}
			\|L_2u\|_{C^{\b-\alpha} }\leq C \|\n u\|_{C_{b } } \leq  C\left(\varepsilon \|u\|_{C^{1+\b} }+  C_2(\varepsilon) \|u\|_{L^{\infty} }\right).
			\end{align*}
			Combining this information with  Lemma \ref{lem:mbL-graph-norm} (i), we get 
					\begin{align*}
			\|L_2u\|_{C^{\b-\alpha} } \leq  C\left(\varepsilon \|(L_1+L_2)u\|_{C^{\b-\alpha} }+   C_2(\varepsilon)\|u\|_{L^{\infty}  }\right).
			\end{align*}
	 with a possibly different constant $C$ not depending on $\varepsilon$.
	 The above two estimates give
				\begin{align*}
			\|L_2u\|_{C^{\b-\alpha} } \leq  C\left(\varepsilon \|L_1u\|_{C^{\b-\alpha} }+  C_2(\varepsilon) \|u\|_{L^{\infty}  }\right).
			\end{align*}
		From this and \eqref{eq:L-perturb-mbL}, we have 
			\begin{align*}
		\|(L_2+L)u\|_{ C^{\b-\alpha} }\leq \varepsilon\|L_1u\|_{ C^{\b-\alpha} }+C(\varepsilon)\|u\|_{C^{\b-\alpha} }\;\;\;\mbox{ for all }u\in {\cC^{1+\b}_0(\mathbb{R}^{N-1})}.
		\end{align*}		
Now applying \cite[ Theorem 2.1]{pazy2012semigroups}, we deduce that  $b (x)\mbL_K+L=L_1+L_2+L$ is a  generator of a strongly continuous analytic semigroup. 	
\end{proof}

We shall now derive the following existence and uniqueness result from Theorem~\ref{thhhh:Corollary0002}, Proposition~\ref{prop:car-inerpol-space} and Proposition~\ref{lem:mbbL_analytic}.

	\begin{theorem} \label{th:Corollary0002}
	Let $0<\a<\b<1$ and  $B_0:= b(x)\mbL_K+L:\cC^{1+\b}_0(\mathbb{R}^{N-1})\to \cC^{\b-\alpha}_0(\mathbb{R}^{N-1})$ be given by \eqref{eq:bmbLKL}.  
		Let   $T>0 $ and    $\rho\in (0,\frac{1}{1+\a})$. Let  $u_0\in  \cC^{1+\b}_0(\mathbb{R}^{N-1})$ and  
		$f\in C^\rho([0,T], {\cC^{\b-\a}_0(\mathbb{R}^{N-1})})$ be such that  $f(0)-B_0 u_0\in { C^{\g_\rho-\a}(\R^{N-1})} $ with $\g_\rho=\b+\rho(1+\a)$.   Then the problem 
		\begin{equation}\label{eq:llkfp}
		\left\{
		\begin{array}{rll}
		u'(t)+B_0u(t)&=&f(t), \hspace{1cm}  t\in(0,T]\\
		u(0)&=&u_0
		\end{array}
		\right.
		\end{equation}
admits a unique   solution $u\in C^\rho([0,T], \cC^{1+\b}_0 )\cap C^{1+\rho}([0,T], \cC^{\b-\a}_0 )$. Moreover,
 there exists  $C_T=C(T,\rho,\k,N,\a,\g,\b)>0$ such that
			\begin{align*}
		\|  u- u_0\|_{C^\rho([0,T], C^{1+\b}) }&+ 	\| u-u_0\|_{  C^{1+\rho}([0,T], C^{\b-\a} )}\\
		&\leq C_T \Big(\|f- B_0 u_0\|_{C^\rho([0,T], C^{\b-\a}) }  +\|f(0)-B_0u_0 \|_{  C^{\g_\rho-\a} } \Big).		
\end{align*} 
In addition, $C_T\leq C_{T_0}$ for all $T\leq T_0$.
	\end{theorem}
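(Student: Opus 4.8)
The plan is to deduce Theorem~\ref{th:Corollary0002} by assembling three facts already at our disposal: that $B_0$ generates a strongly continuous analytic semigroup (Proposition~\ref{lem:mbbL_analytic}), the concrete description of the intermediate space $\mathcal{D}_{B_0}(\rho,\infty)$ in the H\"older scale (Proposition~\ref{prop:car-inerpol-space}), and Lunardi's abstract optimal-regularity result (Theorem~\ref{thhhh:Corollary0002}). Throughout I take $\cX:=\cC^{\b-\a}_0(\R^{N-1})$ as base space and $\cY:=\cC^{1+\b}_0(\R^{N-1})$ as domain; no new estimate is needed, only a matching of hypotheses and a translation of norms.

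First I would invoke Proposition~\ref{lem:mbbL_analytic} --- with $K$, $b$, $L$ and the auxiliary exponent $\g$ as in its statement, which is part of what it means for $B_0$ to be given by \eqref{eq:bmbLKL} --- to conclude that $B_0=b(x)\mbL_K+L:\cC^{1+\b}_0(\R^{N-1})\subset\cC^{\b-\a}_0(\R^{N-1})\to\cC^{\b-\a}_0(\R^{N-1})$ generates a strongly continuous analytic semigroup on $\cX$ with domain exactly $\cY$. Being a generator, $B_0$ is closed, so (as recalled before Theorem~\ref{thhhh:Corollary0002}) its graph norm $u\mapsto\|u\|_{C^{\b-\a}}+\|B_0u\|_{C^{\b-\a}}$ on $\cY$ is equivalent, by the open mapping theorem, to the $C^{1+\b}$-norm; the upper bound $\|B_0u\|_{C^{\b-\a}}\le C\|u\|_{C^{1+\b}}$ is moreover directly visible from Lemma~\ref{SemigroupLemma001} applied with its exponent taken equal to $\b$ (using Assumptions~\ref{ass:L} and $b\in\cC^{\b-\a}_0$). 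I will use this equivalence, together with the bounds \eqref{eq:B_0k-M_k} for the constants $M_0,M_1,M_2$, to pass freely between the abstract norms and the H\"older norms; all constants involved can be kept uniform once the structural data of \eqref{eq:bmbLKL} is bounded by $\k$, which is what accounts for the advertised dependency of $C_T$.

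Next I would verify the one remaining hypothesis of Theorem~\ref{thhhh:Corollary0002}, namely $g:=f(0)-B_0u_0\in\mathcal{D}_{B_0}(\rho,\infty)$; the others, $\rho\in(0,\tfrac{1}{1+\a})\subset(0,1)$, $u_0\in\cY$ and $f\in C^\rho([0,T],\cX)$, are assumed. Since $f(0)\in\cC^{\b-\a}_0(\R^{N-1})$ (as $f$ is $\cX$-valued) and $B_0u_0\in\cC^{\b-\a}_0(\R^{N-1})$ (as $B_0$ maps $\cC^{1+\b}_0$ into $\cC^{\b-\a}_0$), while $g\in C^{\g_\rho-\a}(\R^{N-1})$ by hypothesis, we have $g\in\cC^{\b-\a}_0(\R^{N-1})\cap C^{\g_\rho-\a}(\R^{N-1})$. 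Applying Proposition~\ref{prop:car-inerpol-space} with $\s=\a$ --- its requirements $\a\in(-1,1)$, $\b\in(\a,1)\subset(\a,1+\a)$, $\b\notin\N$ and $\rho\in(0,\tfrac{1}{1+\a})=(0,\min(1,\tfrac{1}{1+\a}))$ being met, and $B_0$ being an admissible generator by the previous paragraph --- the inclusion \eqref{eq:int-incl} gives $g\in\mathcal{D}_{B_0}(\rho,\infty)$ together with $\|g\|_{\mathcal{D}_{B_0}(\rho,\infty)}\le c\,\|g\|_{C^{\g_\rho-\a}}$, $c=c(\a,\b,\rho)$; only the inclusion direction of Proposition~\ref{prop:car-inerpol-space} is used here, so it is immaterial whether $\g_\rho-\a$ is an integer.

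With all hypotheses checked, Theorem~\ref{thhhh:Corollary0002} furnishes a unique solution $u\in C^\rho([0,T],\cY)\cap C^{1+\rho}([0,T],\cX)=C^\rho([0,T],\cC^{1+\b}_0)\cap C^{1+\rho}([0,T],\cC^{\b-\a}_0)$ of \eqref{eq:llkfp}, the estimate \eqref{eq:fdgeB_0}, and the monotonicity $C_T\le C_{T_0}$ for $T\le T_0$. To obtain the asserted inequality I would then replace $\|g\|_{\mathcal{D}_{B_0}(\rho,\infty)}$ by $c\,\|g\|_{C^{\g_\rho-\a}}$, convert the abstract $\cX$- and $\cY$-norms in \eqref{eq:fdgeB_0} into the $C^{\b-\a}$- and $C^{1+\b}$-norms via the graph-norm equivalence of the second paragraph --- in particular bounding $\|u-u_0\|_{C^\rho([0,T],C^{1+\b})}$ by $C\big(\|B_0(u-u_0)\|_{C^\rho([0,T],C^{\b-\a})}+\|u-u_0\|_{C^\rho([0,T],C^{\b-\a})}\big)$ and controlling the last term by $\|u-u_0\|_{C^{1+\rho}([0,T],C^{\b-\a})}$ (using $u(0)=u_0$, so that $(u-u_0)(t)=\int_0^t\de_s(u-u_0)\,ds$) --- and finally check that all constants retain only the dependencies $T,\rho,\k,N,\a,\g,\b$. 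I expect this last norm-translation step, i.e.\ reconciling the asymmetric H\"older structure of the left-hand side of the claimed estimate (the $C^\rho$-in-time part valued in $C^{1+\b}$, the $C^{1+\rho}$-in-time part valued in $C^{\b-\a}$) with \eqref{eq:fdgeB_0}, and keeping the constant from secretly depending on the particular operator $B_0$, to be the only point that requires genuine care; the rest is a bookkeeping assembly of the preceding results.
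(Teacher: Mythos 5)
Your proposal is correct and follows essentially the same route as the paper: invoke Proposition~\ref{lem:mbbL_analytic} for the semigroup generation, Proposition~\ref{prop:car-inerpol-space} with $\s=\a$ to identify the intermediate space and bound $\|f(0)-B_0u_0\|_{\mathcal{D}_{B_0}(\rho,\infty)}$ by a multiple of $\|f(0)-B_0u_0\|_{C^{\g_\rho-\a}}$, feed these into Theorem~\ref{thhhh:Corollary0002}, and finally use the closedness of $B_0$ (graph-norm equivalence) to convert the abstract estimate \eqref{eq:fdgeB_0} into the claimed inequality in H\"older norms. The paper's own proof is terser --- it records the graph-norm lower bound and then says ``from this and \eqref{eq:ssdfg}, we get the result'' --- whereas you make explicit the final bookkeeping (controlling $\|u-u_0\|_{C^\rho([0,T],C^{\b-\a})}$ via the $C^{1+\rho}$-in-time structure and the identity $(u-u_0)(t)=\int_0^t \de_s(u-u_0)\,ds$); that is a legitimate and slightly more careful spelling-out of the same argument rather than a different one.
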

	\begin{proof}
From  Theorem~\ref{thhhh:Corollary0002} and Proposition~\ref{lem:mbbL_analytic},  we get  a unique solution $u\in C^\rho([0,T], \cC^{1+\b}_0 )\cap C^{1+\rho}([0,T], \cC^{\b-\a}_0 )$  to \eqref{eq:llkfp}. Moreover,
				\begin{align}\label{eq:ssdfg}
	&	\| B_0( u- u_0)\|_{C^\rho([0,T], C^{\b-\a} ) }+ 	\| u-u_0\|_{  C^{1+\rho}([0,T],C^{\b-\a} )} \nonumber\\
		&\leq C_T \Big(\|f- B_0 u_0\|_{C^\rho([0,T], C^{\b-\a} )) }  +\|f(0)-B_0u_0 \|_{  \calD_{B_0}(\rho,\infty)}\Big).
\end{align} 
By Proposition \ref{prop:car-inerpol-space} we have  $\|f(0)-B_0u_0 \|_{  \calD_{B_0}(\rho,\infty)}\leq C' \|f(0)-B_0u_0 \|_{  C^{\g_\rho-\a} } $. On the other hand, since $B_0$ is closed, we obtain  
$$
\| B_0v\|_{ C^{\b-\a} }+ 	\|v\|_{   C^{\b-\a} } \geq C\left( \|v\|_{  C^{1+\b}   }+ 	\|v\|_{   C^{\b-\a} } \right),\quad\forall v\in C^{1+\beta}(\mathbb{R}^{N-1}).
$$
From this and \eqref{eq:ssdfg}, we get the result.
	\end{proof}
	\section{Regularity property of the nonlocal mean curvature operator}\label{Section2}
By changes of variables and the fact that $\cG(p)=-\displaystyle\int_{-p}^{p}\frac{d\t}{(1+\t^2)^{\frac{N+\a}{2}}}$  is odd, for $w\in C^{1,\g}_{loc}(\R^{N-1})$, with $\g>\a$, we have
	\begin{align*}
H(w)(x)&= P.V.\int_{\R^{N-1}}\frac{\cG(p_w({x},{y})) }{|{x}-{y}|^{N-1+\a}} d{y}\\
&=\frac{P.V.}{2} \int_{\R^{N-1}}\frac{\cG(p_w({x},{x-z}))+ \cG(p_w({x},{x+z}))}{|{z}|^{N-1+\a}} d{z}
\\&=\frac{P.V.}{2} \int_{\R^{N-1}}\frac{\cG(p_w({x},{x-z}))- \cG(-p_w({x},{x+z}))}{|{z}|^{N-1+\a}} d{z}.
\end{align*}
Therefore, by the fundamental theorem of calculus and the fact that $\cG'(p)=-2(1+p^2)^{\frac{-N-\a}{2}}$ is even, we get 
\begin{align}
\label{eq:exp-nmc-Bu}
H(w)(x)= \int_{\mathbb{R}^{N-1}}\frac{2w(x)-w(x-z)-w(x+z)}{|z|^{N+\alpha}}\cK_w(x,z)dz,
\end{align}
where
\begin{align}\label{Kernelu}
\cK_w(x,z)= \int_0^1\bigg(1+\left( \t p_w({x},{x-z})-(1-\t) p_w({x},{x+z}) \right)^2\bigg)^{-\frac{N+\alpha}{2}}d\tau .
\end{align}
For the remainder  of this section, it will be convenient to write
	\begin{align}
	H(w)(x)=   \int_{S^{N-2}}\int_0^\infty\frac{\d_e w(x,r,\th)}{r^{2+\alpha}}\mathcal{A}_w(x,r,\theta)drd\theta ,\label{Espression001}
	\end{align}	
	where $\d_e w(x,r,\theta)=2w(x)-w(x-r\theta)-w(x+r\theta)$, for all $(x,r,\theta)\in\mathbb{R}^{N-1}\times[0,\infty)\times S^{N-2}$  and 
	\begin{align}\label{ExtensionMap}
	\left.
	\begin{array}{rll}
	\mathcal{A}_{w}&:\mathbb{R}^{N-1}\times[0,\infty)\times S^{N-2}\rightarrow \mathbb{R}\\
	&(x,r,\theta)\mapsto \mathcal{A}_{w}	(x,r,\theta)=\cK_w(x,r\th), \;\;\; \cA_w(x,0,\th)= \bigg(1+(\n w(x)\cdot\th)^2\bigg)^{-\frac{N+\alpha}{2}}.
	\end{array}
	\right.
	\end{align}
	\begin{lemma}\label{lem:calA-diff}
		Let $k\in \N$ and $0<  \g<1$. Then there exists $C=C(k,N,\a,\gamma)>0$ such that  for all $u,w_1,\dots, w_k\in C^{1+\g}(\R^{N-1})$ , we have 
		$$
		\|\de^k_u \mathcal{A}_{u}[w_1,\dots, w_k] \|_{C^{\g  }(\R^{N-1}\times[0,\infty)\times S^{N-2})}\leq  C\left( 1+ \|\n u\|_{C^{\gamma}} \right)^C\prod_{i=1}^k\|\n w_i\|_{C^{\g} }.
	$$
	\end{lemma}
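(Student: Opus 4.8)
The goal is to bound the $C^\gamma$-norm of the $k$-th Gateaux derivative of the map $u \mapsto \cA_u$ evaluated at directions $w_1,\dots,w_k$, uniformly in the regime controlled by $\|\nabla u\|_{C^\gamma}$. The plan is to reduce everything to the chain rule applied to the explicit formula \eqref{ExtensionMap}--\eqref{Kernelu} and then to exploit the smoothness and boundedness of the profile function $G(p):=(1+p^2)^{-\frac{N+\alpha}{2}}$ together with the fact that $\cA_u$ depends on $u$ only through the ``slope variables'' $p_u(x,x\pm r\theta)$. First I would record that $\cA_u(x,r,\theta) = \int_0^1 G\bigl(\tau p_u(x,x-r\theta) - (1-\tau) p_u(x,x+r\theta)\bigr)\,d\tau$ for $r>0$, while for $r=0$ the two slope arguments both collapse to $\nabla u(x)\cdot\theta$, so that the $r=0$ case is genuinely the limit of the $r>0$ case; this continuity in $r$ down to $0$ is what makes $\cA_u$ an element of $C^\gamma(\R^{N-1}\times[0,\infty)\times S^{N-2})$ in the first place and should be checked via a Taylor expansion of $p_u$ in $r$.

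Next I would differentiate $k$ times in $u$. Since $u \mapsto p_u(x,x\pm r\theta)$ is affine, its first variation in a direction $w$ is simply $q_w^\pm(x,r,\theta):=\frac{w(x\pm r\theta)-w(x)}{r|\pm\theta|}$ (and equals $\nabla w(x)\cdot(\pm\theta)$ at $r=0$), and all higher variations vanish. Therefore by the chain rule $\de^k_u\cA_u[w_1,\dots,w_k]$ is a finite sum (over the $2^k$ choices of $\pm$ signs and over $\tau$-weights) of terms of the form $\int_0^1 G^{(k)}\bigl(\tau p_u^- - (1-\tau)p_u^+\bigr)\prod_{i=1}^k\bigl(\pm\tau_i\text{-weighted }q_{w_i}^{\pm}\bigr)\,d\tau$. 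Each factor $q_{w_i}^{\pm}(x,r,\theta)$ is, exactly as in \eqref{eq:Ao-est1}--\eqref{eq:Ao-est3} of Lemma~\ref{SemigroupLemma001}, bounded in $L^\infty$ by $\|\nabla w_i\|_{L^\infty}$ and Hölder-$\gamma$ in $x$ (uniformly in $r,\theta$) with seminorm controlled by $\|\nabla w_i\|_{C^\gamma}$ — this is the standard ``difference quotient is as regular as the gradient'' estimate. The argument $\tau p_u^- - (1-\tau)p_u^+$ of $G^{(k)}$ is likewise bounded by $\|\nabla u\|_{L^\infty}$ and $C^\gamma$ in $x$ with seminorm $\lesssim \|\nabla u\|_{C^\gamma}$; since $G\in C^\infty(\R)$ with all derivatives bounded on bounded sets, $G^{(k)}$ composed with this argument is $C^\gamma$ in $x$ with norm $\le (1+\|\nabla u\|_{C^\gamma})^C$ by the standard composition estimate $\|g\circ h\|_{C^\gamma}\le \|g'\|_{L^\infty(I)}[h]_{C^\gamma}+\|g\|_{L^\infty(I)}$ on the relevant interval $I$. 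Multiplying the $C^\gamma$ bounds of the $k+1$ factors (Hölder spaces are Banach algebras) and integrating in $\tau$ gives the claimed product bound; the regularity in the $(r,\theta)$ variables is handled identically since $q_{w_i}^\pm$ and the $G^{(k)}$-argument are jointly $C^\gamma$ in all three variables, again using the Taylor-in-$r$ expansion near $r=0$ to absorb the apparent singularity.

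The main obstacle, and the only genuinely delicate point, is the uniform control down to $r=0$: one must verify that the difference quotients $q_{w_i}^\pm$ and the kernel argument extend to $C^\gamma$ functions on the closed half-line $[0,\infty)$ in $r$ (not merely on $(0,\infty)$), with the stated norm bounds, and that the $\tau$-integral does not destroy this. This is done by writing $q_w^\pm(x,r,\theta) = \int_0^1 \nabla w(x\pm tr\theta)\cdot(\pm\theta)\,dt$, which is manifestly $C^\gamma$ jointly in $(x,r,\theta)$ on $\R^{N-1}\times[0,\infty)\times S^{N-2}$ with seminorm $\le C\|\nabla w\|_{C^\gamma}$, and similarly expressing the kernel argument as an integral of $\nabla u$; the rest is bookkeeping with the multivariate Faà di Bruno formula and the Banach-algebra property of $C^\gamma$. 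I would present the case $k=1$ in detail and indicate that the general case follows by the same chain-rule expansion, since no new phenomenon appears — only a larger but still finite number of terms of exactly the same structure.
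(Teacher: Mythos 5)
Your plan fills in exactly the details the paper omits: the paper's own ``proof'' of this lemma is the single sentence ``This follows from the definition of $\cA$,'' so there is no substantive argument to compare against. Your reduction — observe that $\cA_u$ depends on $u$ only through the linear (in $u$) slope variables $p_u(x,x\pm r\theta)$, apply the chain rule so that the $k$-th Gateaux derivative is a $\tau$-integral of $G^{(k)}$ at the slope argument times a product of first variations, and then use boundedness of $G^{(k)}$, the composition estimate, and the Banach-algebra property of $C^\gamma$ — is exactly the right skeleton and is what the authors must have had in mind.

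One step as written does not go through, though, and it is worth flagging because it concerns the only ``genuinely delicate point'' you single out. You claim that the representation $q_w^\pm(x,r,\theta)=\int_0^1\nabla w(x\pm tr\theta)\cdot(\pm\theta)\,dt$ is ``manifestly $C^\gamma$ jointly in $(x,r,\theta)$ with seminorm $\le C\|\nabla w\|_{C^\gamma}$.'' That representation handles the $x$- and $r$-increments (since $|(x\pm tr_1\theta)-(x\pm tr_2\theta)|=t|r_1-r_2|\le|r_1-r_2|$), but it does \emph{not} control the $\theta$-increment uniformly in $r$: one has $|(x\pm tr\theta_1)-(x\pm tr\theta_2)|=tr|\theta_1-\theta_2|$, so this route gives a $\theta$-modulus of order $(r|\theta_1-\theta_2|)^\gamma$, which blows up as $r\to\infty$. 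The correct argument for the $\theta$-direction uses the raw difference quotient instead: $q_w^-(x,r,\theta_1)-q_w^-(x,r,\theta_2)=\frac{w(x-r\theta_1)-w(x-r\theta_2)}{r}$, which is bounded by $\|\nabla w\|_{L^\infty}|\theta_1-\theta_2|\le 2^{1-\gamma}\|\nabla w\|_{L^\infty}|\theta_1-\theta_2|^\gamma$ on $S^{N-2}$ (so one even gets Lipschitz in $\theta$). The same remark applies to the slope argument of $G^{(k)}$. With this adjustment the remaining bookkeeping (chain rule, boundedness of all derivatives of $G(p)=(1+p^2)^{-(N+\alpha)/2}$, Banach algebra, $\tau$-integration) closes the proof.
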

	\begin{proof}
		This follows from the definition of $\cA$.
	\end{proof}

	We have  the following lemma.
	\begin{lemma}\label{lem:F-smooth-Holder}
	Let $\g\in (\a,1)$.
Then the  map    $H:C^{1+\gamma}(\R^{N-1}) \to C^{\gamma-\a}(\R^{N-1})$   is of class $C^\infty$ and  for all $k\in\mathbb{N}, $ $u,w_1,\dots,w_k\in C^{1+\gamma}(\R^{N-1}),$
\be \label{eq:est-DkH}
		\|D^k H(u)[w_1,\dots, w_k]\|_{C^{\g-\a}}\leq C\left( 1+ \|\n u\|_{C^{\gamma} } \right)^C  \prod_{i=1}^k\|\n w_i\|_{C^{\gamma} } .
\ee
	with $C>0$ only depending on $k,N,\a,\gamma$.  Moreover     $H:\cC^{1+\g}_0(\R^{N-1})  \to {\cC^{\g-\a}_0} (\R^{N-1})$   is of class $C^\infty$ and  \eqref{eq:est-DkH} holds   for all $u,w_1,\dots,w_k\in \cC^{1+\g}_0(\R^{N-1})$.
	\end{lemma}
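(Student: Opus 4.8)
The plan is to prove that $H$ is $C^\infty$ as a map $C^{1+\g}(\R^{N-1}) \to C^{\g-\a}(\R^{N-1})$ by exhibiting its derivatives explicitly through formal differentiation under the integral sign in the representation \eqref{Espression001}, and then to check that all the resulting integral operators are bounded by invoking Lemma~\ref{SemigroupLemma001}. Concretely, I would first record the formal identity obtained by differentiating \eqref{Espression001} $k$ times in the direction of $w_1,\dots,w_k$: since $\d_e w$ is linear in $w$, the Leibniz rule gives a sum of terms, one where all $k$ derivatives hit the kernel coefficient $\cA_u$ and produce $\d^k_u\cA_u[w_1,\dots,w_k]$ paired with $\d_e u$, plus $k$ terms where one derivative hits $\d_e u$ (turning it into $\d_e w_i$) and the remaining $k-1$ hit $\cA_u$. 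Each such term has exactly the structure of the operator $I_e v$ in Lemma~\ref{SemigroupLemma001}, with $v = u$ or $v = w_i$ and with coefficient $\mu = \d^{j}_u\cA_u[\dots]$, which lies in $C^\g(\R^{N-1}\times[0,\infty)\times S^{N-2})$ by Lemma~\ref{lem:calA-diff}. Applying estimate \eqref{eq:Ie} (with $\g$ in place of the lemma's generic exponent) together with the bound of Lemma~\ref{lem:calA-diff} on the coefficients yields \eqref{eq:est-DkH}.

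Next I would promote this formal computation to a genuine statement about Fréchet differentiability. The standard route: show $H$ is continuous (the $k=0$ case of the estimate, combined with continuity of $u\mapsto\cA_u$ in the appropriate norms via Lemma~\ref{lem:calA-diff}), then show the candidate first derivative $DH(u)[w]$ — the $k=1$ formula — is bounded linear in $w$ and depends continuously on $u$, and that the remainder $H(u+w)-H(u)-DH(u)[w]$ is $o(\|w\|_{C^{1+\g}})$; the latter follows by writing the remainder as an integral of $\int_0^1\bigl(D H(u+tw)-DH(u)\bigr)[w]\,dt$ and estimating via continuity of $u\mapsto DH(u)$ in operator norm, which in turn reduces to continuity of the coefficient maps from Lemma~\ref{lem:calA-diff}. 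Iterating this argument (or, more cleanly, invoking the standard fact that if all formal derivatives up to order $k$ exist as continuous multilinear maps and the $k$-th is continuous in $u$, then $H\in C^k$) gives $H\in C^\infty$. Since $k$ is arbitrary, the proof is complete for the $C^{1+\g}$ case.

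For the second assertion, about the spaces $\cC^{1+\g}_0$ and $\cC^{\g-\a}_0$, I would argue that the same derivative formulas apply and that each constituent operator maps $\cC^{1+\g}_0(\R^{N-1})$ into $\cC^{\g-\a}_0(\R^{N-1})$. For this I would first check it on $w_i, u \in C^\infty_c(\R^{N-1})$: then $\d_e w$ has compactly supported integrand-derivative, so part (ii) of Lemma~\ref{SemigroupLemma001} gives decay at infinity of the $C^{\g-\a}$ norm, and Proposition~\ref{prop:car-C_0} places the image in $\cC^{\g-\a}_0$ (after shrinking the exponent slightly if needed, but here the target exponent $\g-\a$ is already what Proposition~\ref{prop:car-C_0} delivers — or one applies part (iii) of Lemma~\ref{SemigroupLemma001} directly with $\b$ replaced by $\g$, interpreting it via density). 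Then, since $C^\infty_c$ is dense in $\cC^{1+\g}_0$ and the operators are continuous in the $C^{1+\g}\to C^{\g-\a}$ topology by \eqref{eq:est-DkH}, and $\cC^{\g-\a}_0$ is closed in $C^{\g-\a}$, the image of all of $\cC^{1+\g}_0$ lands in $\cC^{\g-\a}_0$. The same estimates then give $C^\infty$-smoothness of the restricted map.

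The main obstacle, I expect, is the bookkeeping in matching each term of the $k$-fold Leibniz expansion of \eqref{Espression001} to the hypotheses of Lemma~\ref{SemigroupLemma001} — in particular verifying that the relevant coefficient functions genuinely lie in $C^\g$ of the product space uniformly (this is exactly Lemma~\ref{lem:calA-diff}, so the real work has been offloaded there) and that the $\d_e$ versus $\d_o$ distinction is handled correctly (here only $\d_e$ appears, which simplifies matters). The subtle point for the $\cC_0$ statement is the interplay between the exponents $\g$ and $\b$ in Lemma~\ref{SemigroupLemma001}: one should apply it with $\b$ there set equal to any value in $(\a,\g)$ and then use that $\cC^{\g-\a}_0 \subset \cC^{\b-\a}_0$ together with a density/closedness argument, or simply note that the conclusion $\lim_{R\to\infty}\|\cdot\|_{C^{\g-\a}(\R^{N-1}\setminus B_R)}=0$ from part (ii) feeds directly into Proposition~\ref{prop:car-C_0}. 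Apart from this, the proof is a routine, if somewhat lengthy, verification, and I would keep the write-up short by citing Lemma~\ref{SemigroupLemma001} and Lemma~\ref{lem:calA-diff} as the two workhorses.
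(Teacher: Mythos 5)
Your proposal is correct and follows essentially the same route as the paper: formal differentiation under the integral sign in \eqref{Espression001}, a Leibniz expansion of $D^k H(u)$ into the term pairing $\delta_e u$ with $\de^k_u\cA^e_u[w_1,\dots,w_k]$ plus the $k$ terms pairing $\delta_e w_j$ with $\de^{k-1}_u\cA^e_u[\dots]$, estimates via Lemma~\ref{SemigroupLemma001} and Lemma~\ref{lem:calA-diff}, and density together with parts (ii)--(iii) of Lemma~\ref{SemigroupLemma001} for the $\cC_0$ statement. The only cosmetic difference is in upgrading the formal derivative to Fr\'echet differentiability: the paper writes the remainder explicitly as a double-parameter integral involving $\de^{k+2}_u\cA^e_{u+\rho\bar\rho v}$ and bounds it quadratically in $\|\n v\|_{C^\g}$, whereas you describe a mean-value-in-integral-form argument; both are standard and interchangeable given the smooth dependence of $\cA_u$ on $u$.
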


	\begin{proof}
		For all $k\in\mathbb{N}$,  $w_0, w_1,\dots,w_k\in C^{1+\gamma}(\R^{N-1}),$ we define 
		$$
		\calB_k^e(u)[w_0; w_1,\dots, w_k](x)=  \int_{S^{N-2}}\int_0^\infty \delta_e w_0(x,r,\theta) \de^k_u \mathcal{A}_{u}^e[w_1,\dots, w_k](x,r,\theta)r^{-2-\a }drd\theta.
		$$	
%
		%
		We now prove   that $H$ is of class $C^\infty$ and for all $k\in \N$,
		\begin{align}\label{eq:Dku-split}
		D^k H(u)[w_1\dots, w_k]&=  \calB^e_k(u)[ u; w_1,\dots, w_k]+ \sum_{j=1}^{k}\calB^e_{k-1}(u)[w_j; w_1,\dots, w_{j-1}, w_{j+1},\dots, w_k] .
		\end{align} 
		For this, it is enough to prove that $u\mapsto  \calB^e_k(u)[ u; w_1,\dots, w_k]$ is differentiable. By the linearity of the map $u\mapsto  \delta_e u$  it is also enough to prove the differentiability of   $u\mapsto  \calB^e_k(u)[ w_0; w_1,\dots, w_k]$   with   
		\begin{align}\label{eq:BkeBko-diff-est}
		\|D_u \calB^e_k(u)[w_0; w_1,\dots, w_k]\|_{C^{\gamma-\a } } \leq C  \left( 1+ \|\n u\|_{C^{\gamma}} \right)^C \prod_{i=0}^k\|\n w_i\|_{C^{\gamma} }  .
		\end{align}
		Let $v\in C^{1+\gamma}(\R^{N-1})$, with $\|\n v\|_{C^{\gamma} }\leq 1$.  We have 
		\begin{align*}
		&\calB^e_k(u+v)[ w_0; w_1,\dots, w_k](x)- \calB^e_k(u)[w_0; w_1,\dots, w_k](x)-\\
		&\int_{S^{N-2}}\int_0^\infty \delta_e w_0(x,r,\theta) \de^{k+1}_u \mathcal{A}_{u}^e[w_1,\dots, w_k,v](x,r,\theta)r^{-2-\a }\,drd\th\\
		&=\int_{S^{N-2}}\int_0^\infty r^{-2-\a}\delta_e w_0(x,r,\theta) \\&\times\int_0^1\left(\de^{k+1}_u \mathcal{A}_{u+\rho v }^e[w_1,\dots, w_k,v](x,r,\theta)-\de^{k+1}_u \mathcal{A}_{u}^e[w_1,\dots, w_k,v](x,r,\theta)\right) d\rho\,drd\th\\
		&=\int_{S^{N-2}}\int_0^\infty  r^{-2-\a }\delta_e w_0(x,r,\theta) \int_0^1\rho\int_0^1\de^{k+2}_u \mathcal{A}_{u+\rho\ov\rho  v }^e[w_1,\dots, w_k,v,v](x,r,\theta)  d\ov \rho d\rho\,drd\th
		\end{align*}
		Now by  Lemma \ref{SemigroupLemma001} and Lemma \ref{lem:calA-diff}, we get 
		\begin{align*}
		&\Big\|\calB^e_k(u+v)[ w_0, w_1,\dots, w_k]- \calB^e_k(u)[w_0, w_1,\dots, w_k]-\\
		&\int_{S^{N-2}}\int_0^\infty r^{-2-\a}\delta_e w_0(\cdot,r,\theta) \de^{k+1}_u \mathcal{A}_{u}^e[w_1,\dots, w_k,v](\cdot,r,\theta)\,drd\th \Big\|_{C^{\g-\a}}\\
		&\leq C  \left( 1+ \|\n u\|_{C^{\gamma-\a}} \right)^C \prod_{i=0}^k\|\n w_i\|_{C^{\gamma}}  \|\n v\|_{C^{\gamma-\a} }^2.
		\end{align*}
		From this, we deduce that  $u\mapsto  \calB^e_k(u)[ w_0; w_1,\dots, w_k]$ is  differentiable and \eqref{eq:BkeBko-diff-est} holds. \\
		Now the fact that $H:C^{1+\gamma}(\R^{N-1}) \to C^{\gamma-\a}(\R^{N-1})$ is of class $C^\infty$, follows easily by induction, thanks to \eqref{eq:Dku-split} and the estimates on $\calB^e_k$. Moreover the estimate on the derivative  $ H$ is an immediate consequence of those of $\calB^e_k$. \\

In view of \eqref{eq:Dku-split}, Lemma \ref{SemigroupLemma001} and Lemma \ref{lem:calA-diff}, the fact that   $H:\cC^{1+\g}_0 (\R^{N-1})\to \cC^{\g-\a}_0(\R^{N-1}) $ is of class $C^\infty$ follows similarly as above. In fact one can simply replace in the above argument $C^{1+\g}(\R^{N-1}) $ with $\cC^{1+\g}_0(\R^{N-1}) $ and $C^{\g-\a}(\R^{N-1})$ with ${\cC^{\g-\a}_0}(\R^{N-1}) $. 
	\end{proof}
%
We compute  next	 the explicit expression of $D H$.
\begin{lemma}\label{lem:DH-explicit}
Let $u\in C^{1+\gamma}_{loc}(\R^{N-1}) $, for some $\g\in (\a,1)$, with $\n u\in C^\g(\R^{N-1})$. Then  for all $w\in C^{1+\gamma} (\R^{N-1})$
\be
D H(u)[w](x)=- P.V.\int_{\R^{N-1}}\frac{w(x)-w(y)}{|x-y|^{N+\a}}\cG'(p_{u}(x,y))\, dy
\ee
and 
$$
\|D H(u)[w]\|_{C^{\g-\a}}\leq C (1+  \|\n u\|_{C^\g}) \|\n w\|_{C^{\g}},
$$
where $\cG'(p)=-2(1+p^2)^{-\frac{N+\a}{2}}.$
Here $C>0$, depends only on $N,\a,\b$ and $\g$.
If moreover  $\g>\b$ and $w\in \cC^{1+\b}_0 (\R^{N-1}) $, then   $D H(u)[w]\in {\cC^{\b-\a}_0} (\R^{N-1})$. 
\end{lemma}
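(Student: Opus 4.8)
\textbf{Proof plan for Lemma~\ref{lem:DH-explicit}.}

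The plan is to differentiate the expression \eqref{eq:exp-nmc-Bu}--\eqref{Kernelu} for $H$ directly, but it is cleaner to start from the original representation $H(w)(x) = P.V.\int_{\R^{N-1}} |x-y|^{-(N-1+\a)} \cG(p_w(x,y))\, dy$ and differentiate under the integral sign in the direction $w$. Since $p_w(x,y) = (w(y)-w(x))/|x-y|$ is linear in $w$, we have $\frac{d}{dt}\big|_{t=0} p_{u+tw}(x,y) = (w(y)-w(x))/|x-y| = p_w(x,y)$, so formally
$$
D H(u)[w](x) = P.V.\int_{\R^{N-1}}\frac{\cG'(p_u(x,y))\, (w(y)-w(x))}{|x-y|^{N-1+\a}\,|x-y|}\, dy = -P.V.\int_{\R^{N-1}}\frac{w(x)-w(y)}{|x-y|^{N+\a}}\cG'(p_u(x,y))\, dy,
$$
which is the claimed formula. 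To make this rigorous I would invoke Lemma~\ref{lem:F-smooth-Holder}, which already establishes that $H: C^{1+\g}(\R^{N-1}) \to C^{\g-\a}(\R^{N-1})$ is $C^\infty$ with the derivative computed via the scheme \eqref{eq:Dku-split}; it then remains to identify $DH(u)[w]$ concretely. Alternatively, one identifies $k=1$ in \eqref{eq:Dku-split}: $DH(u)[w] = \calB_1^e(u)[u;w] + \calB_0^e(u)[w]$, and rewrites the sum of these two anisotropic polar integrals, using $\de_u \cA_u^e[w] = \de_u\cK_u[w](x,r\th) + \de_u\cK_u[w](x,-r\th)$ and the explicit form \eqref{Kernelu} of $\cK_u$, as the single principal value integral against $\cG'(p_u(x,y))$. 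Both routes require only the chain rule and the symmetrization already used to pass from $H(w)$ to \eqref{eq:exp-nmc-Bu}; I expect the bookkeeping of the even/odd split to be the only mildly delicate point, and this is where I would be most careful.

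For the norm estimate $\|DH(u)[w]\|_{C^{\g-\a}} \le C(1+\|\n u\|_{C^\g})\|\n w\|_{C^\g}$, this is exactly inequality \eqref{eq:est-DkH} from Lemma~\ref{lem:F-smooth-Holder} with $k=1$, so nothing new is needed beyond citing it; alternatively one can rederive it by writing $DH(u)[w]$ in polar coordinates as in \eqref{Espression001}, noting that the associated kernel $\mu(x,r,\th) := \cG'(p_u(x,x-r\th))\,\chi_{\mathrm{even}} + (\text{odd part})$ — more precisely the decomposition into $\de_e w$ against an even kernel and $\de_o w$ against an odd kernel vanishing at $r=0$ — lands in $C^{\g-\a}$ (for the even part) and $C^\g$ with vanishing at $r=0$ (for the odd part), by Lemma~\ref{lem:calA-diff} applied to $\de_u\cA_u$, and then applying Lemma~\ref{SemigroupLemma001}(i), i.e. \eqref{eq:Io} and \eqref{eq:Ie}.

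Finally, for the statement that $w\in\cC^{1+\b}_0(\R^{N-1})$ implies $DH(u)[w]\in\cC^{\b-\a}_0(\R^{N-1})$ when $\g>\b$: the derivative $DH(u)[w]$ is, by the formula above and the even/odd splitting, of exactly the form $I_e w + I_o w$ covered by Lemma~\ref{SemigroupLemma001}, with kernels $\mu,\nu$ built from $\cG'(p_u(x,\cdot))$ lying in $C^{\g-\a}$ resp. $C^\g$ (with $\nu(\cdot,0,\cdot)=0$). Hence Lemma~\ref{SemigroupLemma001}(iii), applied with the roles $\b<\g$, directly yields $DH(u)[w]\in\cC^{\b-\a}_0(\R^{N-1})$ for $w\in\cC^{1+\b}_0(\R^{N-1})$. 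I expect no real obstacle here — the main content of the lemma is the explicit identification of $DH(u)$, after which the two estimates are immediate applications of Lemmas~\ref{SemigroupLemma001} and \ref{lem:calA-diff} (or of Lemma~\ref{lem:F-smooth-Holder} itself).
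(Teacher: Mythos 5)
Your plan is correct, and the ingredients you name (formal differentiation of the principal-value integral, Lemma~\ref{lem:F-smooth-Holder} for smoothness, Lemma~\ref{SemigroupLemma001} for the norm estimate and the $\cC^{\b-\a}_0$-membership) are exactly the ones the paper uses. But the route is slightly different, and the paper's route avoids a step you would otherwise have to fill in. The paper does \emph{not} invoke Lemma~\ref{lem:F-smooth-Holder} for the existence of $DH(u)$: it defines the candidate operator $Mw(x) := -P.V.\int \frac{w(x)-w(y)}{|x-y|^{N+\a}}\cG'(p_u(x,y))\,dy$, and then directly proves that
$$
\Gamma := H(u+w) - H(u) - Mw
$$
is quadratic in $\|\n w\|_{C^\g}$ in the $C^{\g-\a}$ norm. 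The key algebraic step is the fundamental theorem of calculus (used twice, since $\cG$ and then $\cG'$ both get difference-quotiented), which writes $\Gamma(x)$ as an integral of $(p_w)^2$ against a kernel $\mu(x,r,\th)$ built from $\cG''$; the oddness of $\cG''$ forces $\mu(x,0,-\th)=-\mu(x,0,\th)$, and this is what lets one split $\Gamma$ into $I_e$-type and $I_o$-type pieces covered by Lemma~\ref{SemigroupLemma001}(i). This gives differentiability and the formula simultaneously, with no interchange-of-limits issue to separately justify.

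Your plan, by contrast, would either (a) take differentiability as given from Lemma~\ref{lem:F-smooth-Holder} and then identify $DH(u)[w]$ with $Mw$ by checking that the difference quotient $\frac{H(u+tw)-H(u)}{t}$ converges pointwise to $Mw$ — but verifying that pointwise convergence is essentially the same estimate the paper makes, so you would not be saving any work; or (b) take $k=1$ in \eqref{eq:Dku-split} and rewrite $\calB_1^e(u)[u;w] + \calB_0^e(u)[w]$ as the P.V. integral against $\cG'(p_u)$ — this is a legitimate but more opaque algebraic manipulation (undoing the $\tau$-integral inside $\cK_u$, resymmetrizing, integrating by parts in $\tau$), and you correctly flag the even/odd bookkeeping as the delicate point; in the paper's approach this bookkeeping is handled once and transparently via the oddness of $\cG''$. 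For the norm estimate and the $\cC^{\b-\a}_0$-statement your plan coincides with the paper. In short: the plan is sound and would work, but the paper's more direct first-order Taylor estimate is the cleaner way to nail down the formula, and it is the one place where your two alternatives would cost you noticeably more bookkeeping than you might expect.
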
	
\begin{proof}
We consider the linear operator $M$ defined as
$$
M w(x):= - P.V.\int_{\R^{N-1}}\frac{w(x)-w(y)}{|x-y|^{N+\a}}\cG'(p_{u}(x,y))\, dy.
$$
We then have 
\begin{align*}
&H(u+w)(x)-H(u)(x)-Mw(x)  \\
%
%
&=P.V.\int_{\R^{N-1}} \int_0^1\frac{[\cG'(p_{u+\t w}(x,y))-\cG'(p_u(x,y))  ]  p_w(x,y)  }{|x-y|^{N-1+\a}} d\t  \, dx.
\end{align*}
We then get 
\begin{align}\label{eq:dif-quo-H}
&\G(x):=H(u+w)(x)-H(u)(x)-Mw(x)  =P.V.\int_{\R^{N-1}}\frac{  (p_w(x,x-y))^2 \mu(x,|y|, y/|y|)}{|y|^{N-1+\a}} \, dx,
\end{align}
where 
$$
\mu(x,r,\th):=2 \int_0^1\t\int_0^1\cG''(p_{u+\t \varrho w}(x,x-r\th))d\t d\varrho 
$$
and 
$$
\mu(x,0,\th):=2 \int_0^1\t\int_0^1\cG''(\n u(x)\cdot \th+\t\varrho \n w(x)\cdot \th) d\t d\varrho .
$$
Using polar coordinates and recalling \eqref{eq:def-deeo}, we can write
\begin{align*}
&\G(x):=P.V.\int_{S^{N-2}}\int_0^\infty r^{-2-\a}(w(x)-w(x-r\th)) \int_0^1 \n w(x-r \t \th)\cdot \th\, d\t\mu(x, r,\th)\, dr d\th\\
&=  \int_{S^{N-2}}\int_0^\infty r^{-2-\a}\d_ow(x,r,\th)  \int_0^1( \n w(x-r \t \th)-\n w(x)) \cdot\th\, d\t\mu(x, r,\th)\, dr d\th\\
&+ \int_{S^{N-2}}\int_0^\infty r^{-2-\a}\d_ow(x,r,\th)  \n w(x) \cdot\th (\mu(x, r,\th)-\mu(x, 0,\th))\, dr d\th \\
&+   \int_{S^{N-2}}\int_0^\infty r^{-2-\a}\d_ew(x,r,\th) \n w(x) \cdot\th  \mu(x, 0,\th)\, dr d\th,
\end{align*}
where we used that  $\mu(x,0,-\th)= -\mu(x,0,\th)$, from the oddness of $\cG''$. Since 
$$
\|\mu\|_{C^\g(\R^{N-1}\times [0,\infty)\times S^{N-2})}\leq C (1+ \|\n w\|_{C^\g}+ \|\n u\|_{C^\g}),
$$
we can thus apply Lemma \ref{SemigroupLemma001}$(i)$ to deduce that
$$
\|\G\|_{C^{\g-\a}}\leq C (1+ \|\n w\|_{C^\g}+ \|\n u\|_{C^\g}) \|\n w\|_{C^{\g}}^2.
$$
Recalling \eqref{eq:dif-quo-H}, this completes the  proof  of the first statement of the lemma. The second statement follows  from     Lemma \ref{SemigroupLemma001}$(iii)$. 
\end{proof}
We deduce the following important result for the weighted fractional mean curvature operator $\cH$ defined in \eqref{Weighted:MCoperator}.
		\begin{corollary}\label{lem:F-smoothXY}
	
		The map $\cH:\cC^{1+\b}_0(\R^{N-1})\to \cC^{\b-\alpha}_0(\R^{N-1})$  is of class  $C^\infty$. Moreover,  	letting $u_0\in C^{1+\gamma}_{loc}(\R^{N-1}) $ for  some $\gamma\in(\b,1)$, with $\n u_0\in C^\g(\R^{N-1})$,  
		  then the  map 
		  $$
		  F:\cC^{1+\b}_0(\R^{N-1})\to \cC^{\b-\alpha}_0(\R^{N-1}), \qquad F(u)=D\cH(u_0)[u]-\cH(u)
		  $$
		   is of class $C^\infty$ and  for all $k\in \N$,
	$$
		\|D^k F(u)\|_{C^{\b -\alpha}} \leq C\left( 1+ \|\n u\|_{C^\b}+ \|\n u_0\|_{C^\b}\right)^C,
	$$
		with $C=C(k,N,\a,\b )>1$. 
	\end{corollary}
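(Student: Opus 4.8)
The plan is to realise $\cH$ as a product of two $C^\infty$ maps, to identify $D\cH(u_0)$ through an explicit integral formula, and then to read off the derivatives of $F$. I will use repeatedly the following elementary facts about the little H\"older spaces: pointwise multiplication $C^\theta(\R^{N-1})\times C^\theta(\R^{N-1})\to C^\theta(\R^{N-1})$ is continuous and bilinear for $\theta\in(0,1)$; since $\|v\|_{C^{\b-\a}}\le C\|v\|_{C^\b}$ for bounded $v$, one has $\cC^\b_0(\R^{N-1})\embed\cC^{\b-\a}_0(\R^{N-1})$; a compactly supported function of class $C^{\g-\a}$ lies in $\cC^{\b-\a}_0(\R^{N-1})$ (mollify, using $\g-\a>\b-\a$); and, approximating both factors in $C^\infty_c(\R^{N-1})$, the product of two elements of $\cC^{\b-\a}_0(\R^{N-1})$ again lies in $\cC^{\b-\a}_0(\R^{N-1})$, while the product of an element of $\cC^{\b-\a}_0(\R^{N-1})$ and any fixed bounded function of class $C^\g$ lies in $\cC^{\b-\a}_0(\R^{N-1})$ as well.

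\emph{Smoothness and estimates for $\cH$.} Write $\cH(u)=H(u)+\phi(u)H(u)$, where $\phi(u):=\sqrt{1+|\n u|^2}-1$. By Lemma~\ref{lem:F-smooth-Holder} with $\g=\b$, the map $u\mapsto H(u)$ is of class $C^\infty$ from $\cC^{1+\b}_0(\R^{N-1})$ to $\cC^{\b-\a}_0(\R^{N-1})$ and obeys \eqref{eq:est-DkH}. Since $p\mapsto\sqrt{1+|p|^2}-1$ is smooth with bounded derivatives and vanishes at $p=0$, the associated superposition operator is of class $C^\infty$ from $\bigl(\cC^\b_0(\R^{N-1})\bigr)^{N-1}$ to $\cC^\b_0(\R^{N-1})$, with $j$-th derivative bounded by $C(1+\|\cdot\|_{C^\b})^C$ (the argument being the standard one, cf. the proof of Lemma~\ref{lem:calA-diff}); composing with the bounded linear map $u\mapsto\n u$ shows that $u\mapsto\phi(u)$ is of class $C^\infty$ from $\cC^{1+\b}_0(\R^{N-1})$ to $\cC^\b_0(\R^{N-1})\embed\cC^{\b-\a}_0(\R^{N-1})$, with $\|D^j\phi(u)\|\le C(1+\|\n u\|_{C^\b})^C$. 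Since pointwise multiplication is a continuous bilinear map $\cC^{\b-\a}_0(\R^{N-1})\times\cC^{\b-\a}_0(\R^{N-1})\to\cC^{\b-\a}_0(\R^{N-1})$, the map $u\mapsto\phi(u)H(u)$, hence also $\cH$, is of class $C^\infty$, and the Leibniz rule together with \eqref{eq:est-DkH} and the above bounds for $\phi$ yields, for every $k\in\N$,
\be
\|D^k\cH(u)[w_1,\dots,w_k]\|_{C^{\b-\a}}\le C\bigl(1+\|\n u\|_{C^\b}\bigr)^C\prod_{i=1}^k\|\n w_i\|_{C^\b},\qquad C=C(k,N,\a,\b).
\ee

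\emph{The linear operator $D\cH(u_0)$.} Since $u_0\in C^{1+\g}_{loc}(\R^{N-1})$ with $\n u_0\in C^\g(\R^{N-1})$ and $\g>\b$, Lemma~\ref{lem:DH-explicit} applies, and the Leibniz rule (with $DH(u_0)$ given by that lemma) defines
\be
D\cH(u_0)[u]=\frac{\n u_0\cdot\n u}{\sqrt{1+|\n u_0|^2}}\,H(u_0)+\sqrt{1+|\n u_0|^2}\,DH(u_0)[u].
\ee
For the first summand, the coefficient $\frac{\n u_0}{\sqrt{1+|\n u_0|^2}}H(u_0)$ is a bounded function of class $C^{\g-\a}$ whose $C^{\b-\a}$-norm is at most $C(1+\|\n u_0\|_{C^\b})^C$ (estimating $\|H(u_0)\|_{C^{\b-\a}}$ from \eqref{Espression001} via Lemma~\ref{SemigroupLemma001}$(i)$ and Lemma~\ref{lem:calA-diff}, both with $\g=\b$, which depend on $u_0$ only through $\n u_0$); multiplying it by $\n u\in\cC^\b_0(\R^{N-1})$ thus gives an element of $\cC^{\b-\a}_0(\R^{N-1})$ with $C^{\b-\a}$-norm $\le C(1+\|\n u_0\|_{C^\b})^C\|\n u\|_{C^\b}$. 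For the second summand, $DH(u_0)[u]=-\mbL_Ku$ with the symmetric kernel $K(x,y)=\cG'(p_{u_0}(x,y))$; its polar representation \eqref{eq:op-mbL-polar} has associated kernel $A_K(x,r,\th)=\cG'(p_{u_0}(x,x-r\th))$, which is of class $C^\b$ with norm $\le C(1+\|\n u_0\|_{C^\b})$ (because $(x,r,\th)\mapsto p_{u_0}(x,x-r\th)$ is, $\n u_0$ being $C^\b$) and is even in $\th$ at $r=0$, so Lemma~\ref{SemigroupLemma001}$(i)$ gives $\|DH(u_0)[u]\|_{C^{\b-\a}}\le C(1+\|\n u_0\|_{C^\b})\|\n u\|_{C^\b}$ while Lemma~\ref{lem:DH-explicit} gives $DH(u_0)[u]\in\cC^{\b-\a}_0(\R^{N-1})$; multiplying by the bounded $C^\g$ function $\sqrt{1+|\n u_0|^2}$ stays in $\cC^{\b-\a}_0(\R^{N-1})$. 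Altogether $D\cH(u_0)[\cdot]:\cC^{1+\b}_0(\R^{N-1})\to\cC^{\b-\a}_0(\R^{N-1})$ is bounded and linear with $\|D\cH(u_0)[u]\|_{C^{\b-\a}}\le C(1+\|\n u_0\|_{C^\b})^C\|\n u\|_{C^\b}$; in particular it is of class $C^\infty$, with all derivatives of order $\ge2$ vanishing.

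\emph{Conclusion for $F$ and the main difficulty.} Hence $F=D\cH(u_0)[\cdot]-\cH$ is of class $C^\infty$ from $\cC^{1+\b}_0(\R^{N-1})$ into $\cC^{\b-\a}_0(\R^{N-1})$, with $D^kF(u)=-D^k\cH(u)$ for $k\ge2$, $DF(u)=D\cH(u_0)-D\cH(u)$, and $F(u)=D\cH(u_0)[u]-\cH(u)$. In each case, combining the estimate for $D^k\cH$ from the second step with the bound $\|D\cH(u_0)[\cdot]\|\le C(1+\|\n u_0\|_{C^\b})^C$ from the third step — and using that all these operators act on $u$ and the $w_i$ only through $\n u$ and $\n w_i$ — yields $\|D^kF(u)\|_{C^{\b-\a}}\le C(1+\|\n u\|_{C^\b}+\|\n u_0\|_{C^\b})^C$ for all $k\in\N$. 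The only genuinely non-routine point, I expect, is the twofold bookkeeping: keeping every product inside the little H\"older space $\cC^{\b-\a}_0(\R^{N-1})$ (rather than merely in $C^{\b-\a}$), and keeping the constants depending on $\|\n u_0\|_{C^\b}$ only — which forces the quantitative estimates for $H(u_0)$ and $DH(u_0)$ to be run through Lemma~\ref{SemigroupLemma001}$(i)$ with exponent $\b$, the extra regularity $\g>\b$ of $u_0$ being used only qualitatively (via Lemma~\ref{lem:DH-explicit}) to land in $\cC^{\b-\a}_0(\R^{N-1})$.
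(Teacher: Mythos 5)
Your proof is correct and follows the same basic strategy as the paper's (product structure of $\cH$, Lemma~\ref{lem:F-smooth-Holder} for the smoothness of $H$, Lemma~\ref{lem:DH-explicit} for the explicit form of $DH(u_0)$, and the polar-coordinate estimates of Lemma~\ref{SemigroupLemma001} to keep constants controlled by $\|\n u\|_{C^\b}$ and $\|\n u_0\|_{C^\b}$). The one point where you deviate is the decomposition: the paper writes $\cH(u)=Q(u)H(u)$ and asserts $Q\in C^\infty(\cC^{1+\b}_0(\R^{N-1}),\cC^{\b-\a}_0(\R^{N-1}))$, whereas you write $\cH(u)=H(u)+\phi(u)H(u)$ with $\phi(u)=\sqrt{1+|\n u|^2}-1$. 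Your variant is a small but genuine refinement: the paper's literal claim cannot hold, since $Q(u)\to 1$ at infinity while every element of $\cC^{\b-\a}_0(\R^{N-1})$ vanishes there, so $Q(u)$ is never in that space; subtracting the constant $1$ fixes this, and the needed conclusion — that the \emph{product} $Q(u)H(u)$ lands in $\cC^{\b-\a}_0$ because $H(u)$ does and multiplication by a bounded $C^\g$-function preserves $\cC^{\b-\a}_0$ — is precisely what your bookkeeping makes explicit. In short, same route, with a cleaner handling of the little H\"older space membership that the paper glosses over.
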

	
	\begin{proof}
	Recall that $ \cH(u)=Q(u)H(u)$ with $Q(u)=\sqrt{1+|\n u|^2 }$ and we observe that $Q\in C ^\infty(\cC^{1+\b}_0(\R^{N-1}),\cC^{\b-\alpha}_0(\R^{N-1})) $ and $DQ(u_0)\in C ^\infty(\cC^{1+\b}_0(\R^{N-1}),\cC^{\b-\alpha}_0(\R^{N-1})) $.  The conclusion then follows from 
  Lemma \ref{lem:F-smooth-Holder} and Lemma \ref{lem:DH-explicit}.
	\end{proof}

\subsection{Regularity properties and linearization of the  weighted fractional mean curvature operator}
We recall that 
$$
	\cH(u):= Q(u)H(u), \qquad Q(u):=\sqrt{1+|\nabla u(x)|^2}.
$$
Our aim is to prove that, provided $u_0\in C^{1+\g}_{loc}(\R^{N-1})$ with $\g\in (\b,1)$ and $\n u_0\in C^{\g}_{loc}(\R^{N-1})$, we have that  	
	$D \cH(u_0) : \cC^{1+\b}_0(\R^{N-1})\to \cC^{\b-\alpha}_0(\R^{N-1})$ generates a strongly continuous  analytic semigroup. 
	We observe that   for all $w\in C^{1+\b}(\R^{N-1})$,  
\be\label{eq:decomp-GNMC}
D\cH(u_0)[w] :=Q(u_0) \ti{L}_1 w+\ti{L}_2w,
\ee
where 
\begin{align}\label{eq:tiL_1}
\ti{L}_1 w(x)&:=-   P.V.\int_{\R^{N-1}}\frac{w(x)-w(y)}{|x-y|^{N+\a}}\cG'(p_{u_0}(x,y))\, dy\ \nonumber\\
&=  P.V.\int_{\R^{N-1}}\frac{w(x)-w(y)}{|x-y|^{N+\a}}  (1+(p_{u_0}(x,y))^2)^{\frac{N+\a}{2}}\,dy
\end{align}
 and 
$$
\ti{L}_2w(x):=H(u_0)(x) DQ(u_0)[w] (x)= H(u_0)(x) \frac{\n u_0(x)\cdot \n w(x)}{Q(u_0)(x)}.
$$
For the following, we recall the set 
\be\label{eq:defcO-cg}
\calO_{\nu}^\g:=\{ u\in C^{1+\gamma}_{loc}(\mathbb{R}^{N-1})\,:\, \|\n u\|_{C^\g}\leq \nu \},
\ee
for $\nu>0$.
We have the following result.
	\begin{lemma}\label{NonlocalOperator--1}
Let $\g\in(\alpha,1) $. Then the  nonlocal mean curvature operator $H$ satisfies the following properties.
\begin{enumerate}
\item[(i)]    There exists  $C'=C'(N,\alpha,\g)>0$ such that  for all $u_0\in \calO_{\nu}^\g$,
\be\label{eq:est-Cnmc-nu}
 \| \cH(u_0)\|_{ C^{\gamma-\alpha} }+ \| H(u_0)\|_{ C^{\gamma-\alpha} }\leq C'  \|\n u_0\|_{ C^{\gamma} } .
\ee
\item[$(ii)$] If  $u\in \cC^{1+\g}_0(\R^{N-1})$ then  $\cH(u), H(u)\in \cC^{\g-\a}_0(\R^{N-1})$. 
\item[(iii)]  There exists $C=C(N,\a,\g,\nu)$ such that if $u\in \calO_{\nu}^{\g-\a}\cap C^{1+\b}_{loc}(\R^{N-1})\cap L^\infty(\R^{N-1})$, for some $\b>\a$, and satisfies $\cH(u)\in C^{\g-\a}(\R^{N-1})$, then $u\in C^{1+\g} (\R^{N-1})$  and
\be \label{eq:graph-norm-equiv-nmc}
C		\|\n u\|_{ C^{\gamma} }\leq \| \cH(u)\|_{ C^{\gamma-\alpha} }+ \|  u\|_{ L^{\infty} } .
\ee
\end{enumerate}
	\end{lemma}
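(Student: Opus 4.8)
The plan is to establish the three parts separately, using the polar–coordinate estimates of Lemma~\ref{SemigroupLemma001}, the $C^\infty$–regularity of $H$ from Lemma~\ref{lem:F-smooth-Holder}, and the interior nonlocal Schauder estimates of \cite{fall2020regularity} already used in Lemma~\ref{lem:mbL-graph-norm}. For $(i)$, one reads $H(u_0)$ off the representation \eqref{Espression001}, which exhibits it as an operator of the type $I_e$ of Lemma~\ref{SemigroupLemma001} with the even kernel $\mu=\cA_{u_0}$; Lemma~\ref{lem:calA-diff} with $k=0$ gives $\|\cA_{u_0}\|_{C^\g(\R^{N-1}\times[0,\infty)\times S^{N-2})}\leq C(1+\|\n u_0\|_{C^\g})^C$, hence the same bound for the $C^{\g-\a}$–norm, so that \eqref{eq:Ie} yields $\|H(u_0)\|_{C^{\g-\a}}\leq C\,\|\n u_0\|_{C^\g}$. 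For $\cH(u_0)=Q(u_0)H(u_0)$ with $Q(u_0)=\sqrt{1+|\n u_0|^2}$, one uses that $C^{\g-\a}(\R^{N-1})$ is a Banach algebra and that $Q(u_0)$ is the composition of a globally Lipschitz smooth function with $\n u_0$, so $\|Q(u_0)\|_{C^{\g-\a}}\leq C(1+\|\n u_0\|_{C^{\g-\a}})$; multiplying the two bounds gives \eqref{eq:est-Cnmc-nu}. (The linear dependence on $\|\n u_0\|_{C^\g}$ for the $H$–term reflects the saturation $|\cG(p)|\leq\min(C_{N,\a},2|p|)$ and can alternatively be derived by estimating the polar integral directly through the second–order cancellation in $\d_e u_0$.) For $(ii)$, Lemma~\ref{lem:F-smooth-Holder} already gives $H\bigl(\cC^{1+\g}_0(\R^{N-1})\bigr)\subset\cC^{\g-\a}_0(\R^{N-1})$, so $H(u)\in\cC^{\g-\a}_0(\R^{N-1})$; moreover $\n u\in\cC^\g_0(\R^{N-1})$, so $Q(u)-1=\sqrt{1+|\n u|^2}-1$, being the composition of a smooth function vanishing at the origin with $\n u$, lies in $\cC^\g_0(\R^{N-1})\subset\cC^{\g-\a}_0(\R^{N-1})$, and since $\cC^{\g-\a}_0(\R^{N-1})$ is closed under multiplication, $\cH(u)=H(u)+(Q(u)-1)H(u)\in\cC^{\g-\a}_0(\R^{N-1})$ (equivalently, invoke Corollary~\ref{lem:F-smoothXY} with $\g$ in place of $\b$).

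For $(iii)$, I would first pass from $\cH(u)$ to $H(u)$: since $\n u\in C^{\g-\a}(\R^{N-1})$ we have $Q(u)=\sqrt{1+|\n u|^2}\in C^{\g-\a}(\R^{N-1})$ with norm controlled by $\nu$, and $Q(u)\geq1$, so $1/Q(u)\in C^{\g-\a}(\R^{N-1})$ with norm controlled by $\nu$; hence $H(u)=\cH(u)/Q(u)\in C^{\g-\a}(\R^{N-1})$ with $\|H(u)\|_{C^{\g-\a}}\leq C(\nu)\,\|\cH(u)\|_{C^{\g-\a}}$. Next, using the identity $\cG(p)=-2p\int_0^1(1+(sp)^2)^{-\frac{N+\a}{2}}\,ds$, I write $H(u)=\mbL_Ku$ with
\[
K(x,y)=2\int_0^1\bigl(1+(s\,p_u(x,y))^2\bigr)^{-\frac{N+\a}{2}}\,ds ,
\]
which is symmetric, satisfies $\tfrac1\k\leq K\leq\k$ with $\k=\k(\nu)$ (because $|p_u(x,y)|\leq\|\n u\|_{L^\infty}\leq\nu$), and whose polar extension $A_K(x,r,\th)=K(x,x-r\th)$ lies in $C^{\g-\a}(\R^{N-1}\times[0,\infty)\times S^{N-2})$ and satisfies $A_K(x,0,\th)=A_K(x,0,-\th)$, since $A_K$ depends smoothly on $q_u(x,r,\th):=-\int_0^1\n u(x-tr\th)\cdot\th\,dt$ and $\n u\in C^{\g-\a}(\R^{N-1})$. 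As $u\in C^{1+\b}_{loc}(\R^{N-1})$ with $\b>\a$, the identity $H(u)=\mbL_Ku$ holds pointwise, so applying the interior estimates of \cite{fall2020regularity} on each ball $B_1(x_0)$ to the translated kernel $K(\cdot+x_0,\cdot+x_0)$, exactly as in the proof of Lemma~\ref{lem:mbL-graph-norm}$(i)$ (and bounding the tail contribution by $|u(x)-u(x_0)|\leq\|\n u\|_{L^\infty}|x-x_0|$), gives $u\in C^{1+\g}(\R^{N-1})$ and $\|\n u\|_{C^\g}\leq C\bigl(\|H(u)\|_{C^{\g-\a}}+\|\n u\|_{L^\infty}\bigr)$ with $C=C(N,\a,\g,\nu)$. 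Finally, absorbing $\|\n u\|_{L^\infty}$ by the interpolation inequality \eqref{eq:interpol} for $\g'=0$ (choosing $\e$ small) and using $\|H(u)\|_{C^{\g-\a}}\leq C(\nu)\|\cH(u)\|_{C^{\g-\a}}$ leads to \eqref{eq:graph-norm-equiv-nmc}.

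The main obstacle is the last step of $(iii)$: checking that the $u$–dependent kernel $K$ fits the framework of \cite{fall2020regularity}. Symmetry and uniform ellipticity (with constant $\k=\k(\nu)$) are immediate from the formula, but one has to verify carefully that $A_K$ is uniformly $C^{\g-\a}$ up to $r=0$ and as $r\to\infty$, so that the Hölder regularity of the coefficient matches the regularity $\g-\a$ of the source $H(u)$; this matching is precisely what makes the gain from $C^{\g-\a}$ to $C^{1+\g}$ sharp. Parts $(i)$ and $(ii)$ are essentially direct consequences of Lemmas~\ref{SemigroupLemma001}, \ref{lem:calA-diff} and \ref{lem:F-smooth-Holder}.
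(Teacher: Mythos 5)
For parts $(i)$ and $(ii)$ you follow essentially the same route as the paper: read $H(u_0)$ off the polar representation \eqref{Espression001}, invoke Lemma~\ref{SemigroupLemma001} (via Lemma~\ref{lem:calA-diff} for the kernel bound, and its part $(iii)$ together with the Banach-algebra property of $\cC^{\g-\a}_0$ for $(ii)$), and handle the weight $Q(u_0)$ by the algebra structure of $C^{\g-\a}$. The paper does exactly this, only more tersely. One small remark: your bound for $(i)$ really gives $\|H(u_0)\|_{C^{\g-\a}}\leq C\,\|\n u_0\|_{C^\g}(1+\|\n u_0\|_{C^\g})^C$, so the constant $C'$ in \eqref{eq:est-Cnmc-nu} does carry a $\nu$-dependence through the hypothesis $u_0\in\cO_\nu^\g$; that matches the way the paper actually uses the lemma, even though the stated constant is written as $C'(N,\a,\g)$.

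For $(iii)$ you take a genuinely different path. The paper keeps the even-difference representation \eqref{eq:exp-nmc-Bu}, observes that $\ti\cK_u(x,z)=\sqrt{1+|\n u(x)|^2}\,\cK_u(x,z)$ is bounded between $(1+4\nu^2)^{-\frac{N+\a}{2}}$ and $1$ and is $C^{\g-\a}$ in $x$ uniformly in $z$ because $u\in\cO_\nu^{\g-\a}$, and then applies the Schauder estimate \cite[Theorem 1.2]{bass2009regularity} directly to $\cH(u)=f$; that theorem is formulated precisely for an even-difference operator with $C^{\g-\a}$ coefficient and $C^{\g-\a}$ source, and produces $u\in C^{1+\g}$ with $\|u\|_{L^\infty}$ on the right-hand side, so no interpolation step is needed. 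You instead first divide by $Q(u)$ to reduce to $H(u)=\mbL_Ku$ with the odd-difference kernel $K(x,y)=2\int_0^1(1+(s p_u(x,y))^2)^{-\frac{N+\a}{2}}ds$, and then try to reuse the \cite{fall2020regularity} interior estimates from the proof of Lemma~\ref{lem:mbL-graph-norm}. The algebra identity $\cG(p)=-2p\int_0^1(1+(sp)^2)^{-\frac{N+\a}{2}}ds$, the symmetry $K(x,y)=K(y,x)$, the ellipticity bounds $\frac1\k\leq K\leq\k$ with $\k=\k(\nu)$, and the evenness $A_K(x,0,\th)=A_K(x,0,-\th)$ are all correctly checked. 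The point you flag as the main obstacle is indeed the crux and deserves emphasis: the proof of Lemma~\ref{lem:mbL-graph-norm}$(i)$ invokes \cite[Theorem 1.3(ii), Theorem 1.4(iii)]{fall2020regularity} under Assumptions~\ref{ass:L}, which posit $A_K\in C^\g$, while in $(iii)$ you only have $\n u\in C^{\g-\a}$ and hence $A_K\in C^{\g-\a}$. The Schauder gain from $C^{\g-\a}$ to $C^{1+\g}$ is exactly what you want and matches the scaling, but it is not literally the statement encapsulated in Lemma~\ref{lem:mbL-graph-norm}; you would need to go back to \cite{fall2020regularity} and confirm that the cited theorems are stated at the matching regularity $\g-\a$, and that they yield genuine regularity (not merely a priori bounds) for $u\in C^{1+\b}_{loc}$, $\b>\a$. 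The paper sidesteps this bookkeeping entirely by switching to the even-difference form and citing Bass, whose hypotheses are tailored to the kernel regularity that actually comes from $u\in\cO_\nu^{\g-\a}$. Your closing interpolation step, trading $\|\n u\|_{L^\infty}$ for $\|u\|_{L^\infty}$ via \eqref{eq:interpol} with $\g'=0$, is fine but is an extra step the paper avoids.
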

	
	\begin{proof}
In view of  \eqref{Espression001} and since $\cH(u_0)=\sqrt{1+|\n u_0|^2}H(u_0)$,  we can apply Lemma \ref{SemigroupLemma001} to get $(i)$ and $(ii)$.\\
For $(iii)$, we let $f:=\cH(u)$ and  recalling  \eqref{eq:exp-nmc-Bu}, we then have
$$
 \int_{\mathbb{R}^{N-1}}\frac{2u(x)-u(x-y)-u(x+y)}{|y|^{N+\alpha}}\ti \cK_u(x,y)dy=f(x)\qquad\textrm{ for all $x\in \R^{N-1}$,}
$$
where $\ti \cK_u(x,y)=\sqrt{1+|\n u(x)|^2}\cK_u(x,y)$.
Since $u\in \cO_{\nu}^{\g-\a}$, we have $$\sup_{x,y \in \R^{N-1}}|\ti \cK_u(x+h,y)-\ti \cK_u(x,y)|\leq C(\nu,N,\a,\g)|h|^{\g-\a}\qquad\textrm{ for all $h\in \R^{N-1}$}.$$  In addition $(1+4\nu^2)^{\frac{-N-\a}{2}}\leq \ti \cK_u(x,y)\leq 1 $, for all $x,y\in \R^{N-1}$.
Therefore, applying \cite[Theorem 1.2]{bass2009regularity}, we obtain
$$
C		\|\n u\|_{ C^{\gamma} }\leq \| f\|_{ C^{\gamma-\alpha} }+ \|  u\|_{ L^{\infty} } .
$$
and the proof is complete.
	\end{proof}
We now have the following result.

\begin{lemma}\label{lemm:SemigropeCorollary}
Let $w, v \in  \calO_{\nu}^\g$, for some $\gamma\in(\a,1)$. Then  the linear  operator  
\be \label{eq:def-main-op-B}
B[w,v]: C^{1+\gamma}(\mathbb{R}^{N-1})\mapsto C^{\gamma-\a}(\mathbb{R}^{N-1}), \qquad B[w,v] u:= \int_{0}^1D\calH(\varrho w+(1-\varrho)v)[u]d\varrho, 
\ee
 satisfies, for all $u\in C^{1+\g}_{loc}(\R^{N-1})$, with $\n u\in C^\g(\R^{N-1})$,
	\be\label{eq:graph-norm-lin-nmc-mean}
	  \|B[w,v] u\|_{ C^{\gamma-\alpha} } \leq C \|\n u\|_{ C^{\gamma} } ,
\ee
and 
\be\label{eq:upper-grad}
C'		\|\n u\|_{ C^{\gamma}}\leq \| B[w,v] u\|_{ C^{\gamma-\alpha} }+ \| \n  u\|_{ L^{\infty}} , 
\ee
for some constants $C,C'$   depending only on $N,\g,\a$ and $\nu$.\\
Moreover for all  $\b\in(\a,\g)$,  the operator  $B:=B[w,v]$, with domain $\cC^{1+\b}_0(\R^{N-1})$, 
 is an infinitesimal generator of a strongly  continuous  analytic semigroup $\{e^{t B}:t\geq 0\}$ on $\cC^{\b-\alpha}_0(\R^{N-1})$.\\
In addition,  there exists $m=m(N,\a,\b,\g,\nu)\in \R$ such that  for $k\in \mathbb{N}$,   
	there exists $M_k=M_k(N,\a,\b,\g,\nu,m)>0$,   such that
	\begin{align}\label{eq:analytic-sm}
	\|t^k B^ke^{tB }\|_{\mathcal{L}(\cC^{\b-\alpha}_0(\R^{N-1}))}\leq M_k e^{mt},\hspace{.5cm}\mbox{for all  $t>0$.}
	\end{align}
\end{lemma}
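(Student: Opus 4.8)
The plan is to exploit that $B[w,v]$ is the $\varrho$-average along the segment $u_\varrho:=\varrho w+(1-\varrho)v$ of the linearizations $D\calH(u_\varrho)$, and that the set $\calO_\nu^\g$ is convex, so every $u_\varrho$ lies in $\calO_\nu^\g$. Inserting \eqref{eq:decomp-GNMC}--\eqref{eq:tiL_1} (with $\cG'(p)=-2(1+p^2)^{-\frac{N+\a}{2}}$) and carrying the $\varrho$-integral under the principal value gives
\begin{align*}
B[w,v]u(x)&=P.V.\int_{\R^{N-1}}\frac{u(x)-u(y)}{|x-y|^{N+\a}}\,\kappa(x,y)\,dy+\vec c(x)\cdot\n u(x),\\
\kappa(x,y)&:=2\int_0^1\sqrt{1+|\n u_\varrho(x)|^2}\,\bigl(1+p_{u_\varrho}(x,y)^2\bigr)^{-\frac{N+\a}{2}}d\varrho,\qquad
\vec c(x):=\int_0^1 H(u_\varrho)(x)\,\frac{\n u_\varrho(x)}{\sqrt{1+|\n u_\varrho(x)|^2}}\,d\varrho.
\end{align*}
First I would establish the upper bound \eqref{eq:graph-norm-lin-nmc-mean}: applying the first-order estimate of Lemma~\ref{lem:DH-explicit} (together with $Q\in C^\infty$, Corollary~\ref{lem:F-smoothXY}) to each $u_\varrho\in\calO_\nu^\g$ and integrating in $\varrho$ gives $\|B[w,v]u\|_{C^{\g-\a}}\le C(N,\a,\g,\nu)\|\n u\|_{C^\g}$. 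Carrying the $\varrho$-integral through the last assertions of Lemma~\ref{lem:DH-explicit} and Lemma~\ref{NonlocalOperator--1}, and using the same estimate with $\g$ replaced by $\b$, shows moreover that $B[w,v]$ is a bounded operator $\cC^{1+\b}_0(\R^{N-1})\to\cC^{\b-\a}_0(\R^{N-1})$.

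For the converse estimate \eqref{eq:upper-grad} I would split $\kappa=K+\kappa_{\mathrm a}$ into its symmetric part $K(x,y):=\frac12(\kappa(x,y)+\kappa(y,x))$ and its antisymmetric part; since $p_{u_\varrho}(x,y)^2=p_{u_\varrho}(y,x)^2$, the latter equals $\kappa_{\mathrm a}(x,y)=\int_0^1\bigl(\sqrt{1+|\n u_\varrho(x)|^2}-\sqrt{1+|\n u_\varrho(y)|^2}\bigr)(1+p_{u_\varrho}(x,y)^2)^{-\frac{N+\a}{2}}d\varrho$, which vanishes on the diagonal, and after the passage to polar coordinates of \eqref{eq:exp-nmc-Bu}--\eqref{ExtensionMap} the operator it defines is of type $I_o$ in Lemma~\ref{SemigroupLemma001} with a $C^\g$ symbol equal to $0$ at $r=0$, while $\vec c\in C^{\g-\a}(\R^{N-1})$ by Lemma~\ref{NonlocalOperator--1}$(i)$; hence $\|B[w,v]u-\mbL_K u\|_{C^{\g-\a}}\le C(\nu)\|\n u\|_{C^{\g-\a}}$. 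On the other hand $\mbL_K$ satisfies Assumptions~\ref{ass:L} with a constant $\k=\k(N,\a,\g,\nu)$: symmetry is built in; $2(1+\nu^2)^{-\frac{N+\a}{2}}\le K\le 2\sqrt{1+\nu^2}$ because $|\n u_\varrho|\le\nu$; and its polar symbol $A_K(x,r,\theta)=K(x,x-r\theta)$ extends to a $C^\g$ function on $\R^{N-1}\times[0,\infty)\times S^{N-2}$ since $p_{u_\varrho}(x,x-r\theta)$ extends across $r=0$ to $-\n u_\varrho(x)\cdot\theta$ with $C^\g$ dependence, exactly as in \eqref{ExtensionMap} and Lemma~\ref{lem:calA-diff}. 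Combining \eqref{eq:semi-norm-nu-mblllll} of Lemma~\ref{lem:mbL-graph-norm}$(i)$ for $\mbL_K$ with the lower-order bound above and the interpolation inequality \eqref{eq:interpol} (to absorb $\|\n u\|_{C^{\g-\a}}$ into $\e\|\n u\|_{C^\g}+C_\e\|\n u\|_{L^\infty}$) then yields \eqref{eq:upper-grad}.

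To obtain the generator property I would check that $B:=B[w,v]$, written in the form $b\,\mbL_K+L$ of \eqref{eq:bmbLKL} with $b\equiv1$ and $L$ collecting the $\kappa_{\mathrm a}$-operator and $u\mapsto\vec c\cdot\n u$, satisfies the hypotheses of Proposition~\ref{lem:mbbL_analytic}. Assumptions~\ref{ass:L} for $\mbL_K$ were verified above. The perturbation condition \eqref{eq:L-perturb-mbL} follows from the previous paragraph: $\|Lu\|_{C^{\b-\a}}\le C(\nu)\|\n u\|_{C^{\b-\a}}\le\e\|\n u\|_{C^\b}+C_\e\|u\|_{L^\infty}$ by \eqref{eq:interpol}, and then $\|\n u\|_{C^\b}\le C(\|\mbL_K u\|_{C^{\b-\a}}+\|u\|_{L^\infty})$ by \eqref{eq:semi-norm-nu-mblllll}. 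Finally \eqref{eq:smooth-even} holds because the even part of the symbol at $r=0$ is $A_K^e(x,0,\theta)=4\int_0^1\sqrt{1+|\n u_\varrho(x)|^2}\,(1+(\n u_\varrho(x)\cdot\theta)^2)^{-\frac{N+\a}{2}}d\varrho$, and differentiating under the $\varrho$-integral shows that each $\theta$-derivative of it is bounded in $C^\g$ (in the $x$ variable) by a constant depending only on $N,\a,\g,\nu$, since $\|\n u_\varrho\|_{C^\g}\le\nu$. Proposition~\ref{lem:mbbL_analytic} then gives that $B$, with domain $\cC^{1+\b}_0(\R^{N-1})$, generates a strongly continuous analytic semigroup $\{e^{tB}\}_{t\ge0}$ on $\cC^{\b-\a}_0(\R^{N-1})$.

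For the last claim \eqref{eq:analytic-sm} I would invoke standard analytic semigroup theory: the generator $B$ of a strongly continuous analytic semigroup is sectorial, so there exist $m\in\R$ and $M_k>0$ with $\|B^k e^{tB}\|_{\cL(\cC^{\b-\a}_0)}\le M_k\,t^{-k}e^{mt}$ for all $t>0$ (cf.\ \cite[Proposition 2.1.1]{lunardi2012analytic} for $t\le1$; the case $t>1$ follows from the semigroup law and the exponential bound on $\|e^{tB}\|_{\cL(\cC^{\b-\a}_0)}$), and multiplying by $t^k$ while slightly enlarging $m$ absorbs the polynomial factor, giving \eqref{eq:analytic-sm}. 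The main obstacle throughout is the verification that the $\varrho$-averaged, symmetrized kernel $K$ satisfies Assumptions~\ref{ass:L} — that its polar symbol is $C^\g$ including at $r=0$, and that the even part of that symbol at $r=0$ is $C^\infty$ in $\theta$ with all $\theta$-derivatives bounded in $C^\g$ in $x$ — which requires differentiating under the $\varrho$-integral after the change to polar coordinates exactly as in the proof of Lemma~\ref{lem:calA-diff}, keeping track only of $\|\n u_\varrho\|_{C^\g}\le\nu$; everything else is bookkeeping of lower-order terms and repeated use of interpolation together with the Schauder-type bound \eqref{eq:semi-norm-nu-mblllll}.
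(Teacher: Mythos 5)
Your proof is correct and reaches the same conclusion by essentially the same machinery (Lemma~\ref{SemigroupLemma001}, Lemma~\ref{lem:mbL-graph-norm}, Proposition~\ref{lem:mbbL_analytic}), but it organizes the decomposition of $B[w,v]$ differently from the paper. The paper writes $B[w,v]=Q\,\mbL_K+L$ by keeping the weight $Q(u_\varrho)=\sqrt{1+|\n u_\varrho|^2}$ as the multiplicative prefactor $b(x)$ in \eqref{eq:bmbLKL}, so that the averaged kernel
\[
K(x,y)=-\int_0^1\cG'\bigl(\varrho p_w(x,y)+(1-\varrho)p_v(x,y)\bigr)\,d\varrho
\]
is automatically symmetric by evenness of $\cG'$ and $p_U(x,y)=-p_U(y,x)$, with no symmetrization step required; the only remaining perturbation $L$ is the drift term coming from $H(u_\varrho)DQ(u_\varrho)[u]$. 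You instead absorb the weight $Q(u_\varrho)(x)$ into the kernel, which destroys the built-in symmetry, and then symmetrize by hand; the antisymmetric remainder is then absorbed into $L$ as an $I_o$-type operator (its polar symbol vanishes at $r=0$ since $\n u_\varrho\in C^\g$) together with the drift. The two routes are genuinely interchangeable: what the paper's factorization buys is that symmetry comes for free, while what your symmetrization buys is that you work with $b\equiv 1$ and put the entire weight-variability into the kernel and the lower-order correction — arguably a slightly more transparent bookkeeping of where the asymmetry goes. One small remark: you take $b\equiv 1$, which formally does not satisfy the hypothesis $b\in\cC^{\b-\a}_0(\R^{N-1})$ in Proposition~\ref{lem:mbbL_analytic} (functions in that space tend to $0$ at infinity); the same issue is already present in the paper's own use of $b=Q(w)$, and in both cases the argument is harmless because the proof of Proposition~\ref{lem:mbbL_analytic} only uses that $b$ is bounded away from $0$ and $\infty$ with H\"older modulus controlled by $\k$, not the little-H\"older structure of $b$. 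Your verification of \eqref{eq:smooth-even}, of the perturbation estimate \eqref{eq:L-perturb-mbL} via \eqref{eq:semi-norm-nu-mblllll} and \eqref{eq:interpol}, and the derivation of \eqref{eq:analytic-sm} from sectoriality are all in line with the paper's argument.
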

\begin{proof}
%
Thanks  to \eqref{eq:decomp-GNMC}, we can write 
\be \label{eq:B-integrro-diff}
B[w,v]=Q(w)B_1+  B_2,
\ee
where 
 $Q(w)=\sqrt{1+|\n w|^2}$ and 
$$
B_1 u=- \int_{\R^{N-1}}\frac{u(x)-u(y)}{|x-y|^{N+\a}}   \int_{-1}^1\cG'(\varrho p_{   w}(x,y)+(1-\varrho) p_{v}(x,y))  d\varrho dy,
$$
$$
B_2 u:=H(v) \int_0^1DQ(\varrho w+(1-\varrho) v)[ u] d\varrho   .
$$
Recall that $\cG'(p)=-(1+p^2)^{-\frac{N+\a}{2}}$ and  
$$
p_U(x,y)=-\frac{U(x)-U(y)}{|x-y|}=-\int_0^1\n U(\t (x-y)+y)\cdot \frac{x-y}{|x-y|}\, d\t.
$$
Therefore since  $p_U(x,x-r\th)=-\int_0^1\n U(x-r\t\th)\cdot \th\, d\t$,  we see that   $B_1$ satisfies the properties in Assumptions \ref{ass:L},  with $K(x,y):=- \int_{-1}^1\cG'(\varrho p_{   w}(x,y)+(1-\varrho) p_{v}(x,y))  d\varrho$ and 
 $\k$, depending only on $N,\a,\g, \b $ and $\nu$.  
 Applying Lemma \ref{SemigroupLemma001}, we obtain \eqref{eq:graph-norm-lin-nmc-mean}. 
On the other hand by Lemma \ref{lem:mbL-graph-norm},  we have 
\be \label{eq:omicr}
C'		\|\n u\|_{ C^{\gamma} }\leq \| B_1 u\|_{ C^{\gamma-\alpha} }+ \| \n  u\|_{ L^{\infty} }.
\ee
From this and  \eqref{eq:interpol}, we get 
\be \label{eq:B_eperturb-B_1}
 \|B_2 u\|_{ C^{\gamma-\alpha}}  \leq  \e \|B_1 u\|_{ C^{\gamma -\a}}+ c_\e\| \n u\|_{L^\infty}.
\ee
Combining  this with  \eqref{eq:omicr}, we obtain  \eqref{eq:upper-grad}.\\

We now  let $\b\in (\a,\g)$ and $u\in\cC^{1+\b}_0(\R^{N-1})$ so that $B_2u \in \cC^{\b-\a}_0(\R^{N-1})$. Then   \eqref{eq:interpol} and \eqref{eq:B_eperturb-B_1} imply that 
$$
 \|B_2 u\|_{ C^{\b-\alpha}}  \leq  \e \|B_1 u\|_{ C^{\b-\a} }+ c_\e\|  u\|_{C^{\b-\a}}.
$$
Moreover,  
$$
A_K^e(x,0,\th)=-2 \int_{-1}^1 (1+(\varrho \n w(x)\cdot\th+(1-\varrho) \n v(x)\cdot\th)^2)^{-\frac{N+\a}{2}}  d\varrho
$$
and clearly satisfies $  \sup_{\th\in S^{N-2}}\|\n^k_\th A_K^e(\cdot,0,\th)\|_{C^\g}\leq C_k(N,\a,\g,\nu). $
We can thus   apply  Lemma   \ref{lem:mbbL_analytic}, to deduce that  $B[w,v]:\cC^{1+\b}_0(\R^{N-1})\to \cC^{\b-\alpha}_0(\R^{N-1})$ generates a strongly continuous analytic semigroup.  
Since $B[w,v]$ is sectorial,  \eqref{eq:analytic-sm} follows from  \cite[Proposition 2.1.1]{lunardi2012analytic}.\\

\end{proof}
As	 a consequence of the above lemma, we the following result.
\begin{corollary}\label{corr:SemigropeCorollary}
Let  $\b\in (\a,1)$ and $u_0\in  \calO_{\nu}^\g$, for some $\gamma\in(\b,1)$. Then  the linear  operator  
\be \label{eq:calL_0-analy}
\calL_0: \cC^{1+\b}_0(\R^{N-1}) \mapsto \cC^{\b-\a}_0(\R^{N-1}) , \qquad \calL_0 u:=  D\calH(u_0)[u], 
\ee
is an infinitesimal generator of  a strongly continuous analytic semigroup $\{e^{t \cL_0}:t\geq 0\}$ on $\cC^{\b-\alpha}_0(\R^{N-1})$.\\
   Moreover, for   $T>0 $, $\rho\in (0,\frac{1}{1+\a})$,   $u_0\in  \cC^{1+\b}_0(\mathbb{R}^{N-1})$ and  
		$f\in C^\rho([0,T], {\cC^{\b-\a}_0(\mathbb{R}^{N-1})})$  such that  $f(0)-\cL_0 u_0\in { C^{\g_\rho-\a}(\R^{N-1})}, $ with $\g_\rho=\b+\rho(1+\a)$,     the initial value problem 
		\begin{equation*} 
		\left\{
		\begin{array}{rll}
		u'(t)+\cL_0u(t)&=&f(t), \hspace{1cm}  t\in(0,T]\\
		u(0)&=&u_0
		\end{array}
		\right.
		\end{equation*}
admits a unique   solution $u\in C^\rho([0,T], \cC^{1+\b}_0(\mathbb{R}^{N-1}))\cap C^{1+\rho}([0,T], \cC^{\b-\a}_0(\mathbb{R}^{N-1}))$. In addition,
 there exists  $C_T=C(T,\rho,\nu,N,\a,\g,\b)>0$ such that
			\begin{align*}
		\|  u- u_0\|_{C^\rho([0,T], C^{1+\b}) }&+ 	\| u-u_0\|_{  C^{1+\rho}([0,T], C^{\b-\a} )}\\
		&\leq C_T \Big(\|f- \cL_0 u_0\|_{C^\rho([0,T], C^{\b-\a}) }  +\|f(0)-\cL_0u_0 \|_{  C^{\g_\rho-\a} } \Big),
\end{align*} 
with, $C_T\leq C_{T_0}$ for all $T\leq T_0$.
\end{corollary}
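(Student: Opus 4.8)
The plan is to derive the corollary by specializing, to the case $w=v=u_0$, the results already established for the two-parameter family of operators $B[w,v]$ in Lemma~\ref{lemm:SemigropeCorollary} and for the abstract Cauchy problem in Theorem~\ref{th:Corollary0002}. No new analysis is needed; the statement is essentially a repackaging of those two results.

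First I would note that, for every $u\in C^{1+\g}_{loc}(\R^{N-1})$,
$$
B[u_0,u_0]u=\int_0^1 D\calH(\varrho u_0+(1-\varrho)u_0)[u]\,d\varrho=D\calH(u_0)[u]=\calL_0 u,
$$
so that $\calL_0$ \emph{is} the operator $B[u_0,u_0]$. Since $u_0\in\calO_\nu^\g$ and $\b\in(\a,\g)$, Lemma~\ref{lemm:SemigropeCorollary} applies with $w=v=u_0$ and gives immediately that $\calL_0$, with domain $\cC^{1+\b}_0(\R^{N-1})$, generates a strongly continuous analytic semigroup $\{e^{t\calL_0}\}_{t\ge0}$ on $\cC^{\b-\a}_0(\R^{N-1})$, together with the smoothing bounds \eqref{eq:analytic-sm}. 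Inspecting the proof of that lemma, all constants involved depend only on $N,\a,\b,\g$ and $\nu$, the constant $\k$ in Assumptions~\ref{ass:L} for the kernel of the principal part being controlled through $\|\n u_0\|_{L^\infty}\le\nu$. This proves the first assertion.

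For the non-homogeneous Cauchy problem I would use the decomposition \eqref{eq:B-integrro-diff}, specialized to $w=v=u_0$, which reads $\calL_0=Q(u_0)\,\mbL_K+B_2$ with $Q(u_0)=\sqrt{1+|\n u_0|^2}$, $\mbL_K$ the nonlocal operator \eqref{eq:class-op} attached to the kernel $K(x,y)=-\int_{-1}^1\cG'(p_{u_0}(x,y))\,d\varrho$, and $B_2$ the lower-order operator appearing there. As verified in the proof of Lemma~\ref{lemm:SemigropeCorollary}, $K$ satisfies Assumptions~\ref{ass:L} with a constant $\k=\k(N,\a,\b,\g,\nu)$, the coefficient $b:=Q(u_0)$ obeys $1\le b\le\sqrt{1+\nu^2}$, and $B_2$ fulfils both the perturbation bound \eqref{eq:L-perturb-mbL} and the smoothness condition \eqref{eq:smooth-even}; hence $\calL_0$ is exactly an operator of the form \eqref{eq:bmbLKL}. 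Theorem~\ref{th:Corollary0002} then applies verbatim with $B_0=\calL_0$ and yields, for every $T>0$, $\rho\in(0,\tfrac1{1+\a})$, $u_0\in\cC^{1+\b}_0(\R^{N-1})$ and $f\in C^\rho([0,T],\cC^{\b-\a}_0(\R^{N-1}))$ with $f(0)-\calL_0 u_0\in C^{\g_\rho-\a}(\R^{N-1})$, the unique solution $u\in C^\rho([0,T],\cC^{1+\b}_0(\R^{N-1}))\cap C^{1+\rho}([0,T],\cC^{\b-\a}_0(\R^{N-1}))$, the claimed estimate, and the monotonicity $C_T\le C_{T_0}$ for $T\le T_0$; since $\k$ depends only on $N,\a,\b,\g,\nu$, the constant $C_T$ depends only on $T,\rho,\nu,N,\a,\g,\b$, as asserted.

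I do not expect a genuine obstacle: the corollary follows by combining Lemma~\ref{lemm:SemigropeCorollary} and Theorem~\ref{th:Corollary0002} once $\calL_0=D\calH(u_0)$ has been identified with $B[u_0,u_0]$ and with an operator of the type \eqref{eq:bmbLKL}. The only mildly delicate point is the bookkeeping in this identification — checking that setting $w=v=u_0$ recovers $D\calH(u_0)$, that the resulting kernel $K$ still satisfies Assumptions~\ref{ass:L} with a constant depending only on $N,\a,\b,\g,\nu$, and that $b=Q(u_0)$ together with the remainder $B_2$ meet the hypotheses of Proposition~\ref{lem:mbbL_analytic} — but all of this was already carried out within the proof of Lemma~\ref{lemm:SemigropeCorollary}, so here it only needs to be invoked with the two arguments set equal to $u_0$.
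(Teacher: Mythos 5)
Your proof is correct and follows the same route as the paper's: identify $\calL_0=B[u_0,u_0]$, invoke Lemma~\ref{lemm:SemigropeCorollary} together with the decomposition \eqref{eq:B-integrro-diff} to recognize $\calL_0$ as an operator of the form \eqref{eq:bmbLKL}, and then apply Theorem~\ref{th:Corollary0002}. The paper states this in one line; you simply spell out the bookkeeping more explicitly.
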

\begin{proof}
Since $B[u_0,u_0]=D\calH(u_0)$, the result follows from Lemma \ref{lemm:SemigropeCorollary}, \eqref{eq:B-integrro-diff} and Theorem \ref{th:Corollary0002}. 
\end{proof}

	\begin{remark} \label{rem:DB_0-rho}
Let  $\rho\in (0, \frac{1}{1+\a})$ and define $\g_\rho=\b+\rho(1+\a)$ and suppose that $\g_\rho-\a\not=1$.  Then by Proposition \ref{prop:car-inerpol-space}, 
if $B:=B[w,v]$ is given by \eqref{eq:def-main-op-B}   then thanks to \eqref{eq:graph-norm-lin-nmc-mean}, $B u\in  \calD_{B}(\rho,\infty)$ as soon as  $\n u\in C^{\g_\rho}(\R^{N-1})\cap \cC^{\b-\a}_0(\R^{N-1})$. \\
As a  consequence, if  $ u_0\in  \cC^{1+\b}_0(\R^{N-1})$, for some $\b>\a$, then by  \eqref{eq:graph-norm-equiv-nmc},   $\calH(u_0)\in  \calD_{\calL_0}(\rho,\infty)$  if and only if  $\n u_0\in C^{\g_\rho}(\R^{N-1}) \cap \cC^{\b-\a}_0(\R^{N-1})$, where $\calL_0:=D\calH(u_0)$.
	\end{remark}
Our next result shows that the operator $B[w,v]$ (defined in \eqref{eq:def-main-op-B}) satisfies the maximum principle. 
\begin{lemma}\label{lem:explicit-Bwv}
Let  $\g>\a$ and  $ w, v\in C([0,T],\cO^\g_\nu) $. Then  for all $u\in \cO^\g_\nu $, 
\begin{align*}
B[w(t),v(t)]u(x)= P.V.\int_{\R^{N-1}}\frac{u(x)-u(x+y)}{|y|^{N+\a}} \mu(t,x,|y|, y/|y|) \, dy+ V(t,x)\cdot \n u(x), \nonumber
\end{align*}
where, for  $(x,r,\th)\in \R^{N-1}\times [0,\infty)\times S^{N-2}$ we have  $p_w(x,x-r\th)=-\int_0^1\n w(x-r\t\th)\cdot\th d\t$, 
$$
\mu (t,x,r,\th):=-\int_{-1}^1\cG'(\varrho p_{w(t) }(x,x-r\th )+(1-\varrho) p_{v(t)}(x,x-r\th)) Q(\varrho w(t)+(1-\varrho)v(t))(x)  d\varrho,
$$
 and
$$
V(t,x):=\int_{-1}^1 \frac{\varrho \n w(t,x)+(1-\varrho) \n v(t,x) }{ Q(\varrho w(t)+(1-\varrho) v(t))(x)}  H(\varrho w(t)+(1-\varrho)v(t))(x)  d\varrho.
$$ 
Moreover   $ \mu(t,x,0, \th)= \mu(t,x,0, -\th) $ and  $\mu \in C([0,T],C^\g( \R^{N-1}\times [0,\infty)\times S^{N-2}))$ and $V\in C_b([0,T]\times  \R^{N-1}).$
	\end{lemma}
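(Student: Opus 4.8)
The plan is to establish the explicit formula for $B[w(t),v(t)]u$ by unwinding the decomposition \eqref{eq:B-integrro-diff} from Lemma~\ref{lemm:SemigropeCorollary}, and then to verify the stated regularity of the kernel $\mu$ and the drift $V$ by direct inspection, using the smoothness properties of $\cG$ together with the assumption $w,v\in C([0,T],\cO_\nu^\g)$. First I would recall from the proof of Lemma~\ref{lemm:SemigropeCorollary} that $B[w,v]=Q(w)B_1+B_2$, but observe that here it is cleaner to keep the $\varrho$-integral form coming directly from the definition \eqref{eq:def-main-op-B}, namely $B[w(t),v(t)]u=\int_0^1 D\calH(\varrho w(t)+(1-\varrho)v(t))[u]\,d\varrho$, and insert the explicit expression for $D\calH(u_0)[w]$ from \eqref{eq:decomp-GNMC}–\eqref{eq:tiL_1} together with Lemma~\ref{lem:DH-explicit}. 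This gives $D\calH(\phi)[u]= Q(\phi)\tilde L_1^\phi u+\tilde L_2^\phi u$ where $\tilde L_1^\phi u(x)=-P.V.\int \frac{u(x)-u(y)}{|x-y|^{N+\a}}\cG'(p_\phi(x,y))\,dy$ and $\tilde L_2^\phi u(x)=H(\phi)(x)\frac{\n\phi(x)\cdot\n u(x)}{Q(\phi)(x)}$, with $\phi=\varrho w(t)+(1-\varrho)v(t)$.

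Next I would pass to polar coordinates in the first term exactly as in \eqref{eq:op-mbL-polar}: writing $y=r\th$ and using $\cG'$ even together with $p_\phi(x,x-r\th)=-\int_0^1\n\phi(x-r\t\th)\cdot\th\,d\t$, the kernel of $\tilde L_1^\phi$ becomes $-\cG'(p_\phi(x,x-r\th))$, which depends on $y$ only through $|y|$ and $y/|y|$. Multiplying by $Q(\phi)(x)$ and integrating in $\varrho$ produces precisely the claimed $\mu(t,x,r,\th)$. For the second (drift) term, integrating $\tilde L_2^\phi u(x)=H(\phi)(x)\frac{\n\phi(x)\cdot\n u(x)}{Q(\phi)(x)}$ over $\varrho\in[0,1]$ and noting $\n\phi=\varrho\n w(t,x)+(1-\varrho)\n v(t,x)$ gives $V(t,x)\cdot\n u(x)$ with $V$ as stated. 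A small point to check: the principal value from $\tilde L_1^\phi$ is preserved under the symmetrization; since $\cG'$ is even and $\mu(t,x,0,\th)=\mu(t,x,0,-\th)$ (because $p_\phi(x,x)=0$ so the value at $r=0$ is $-\cG'(0)Q(\phi)(x)$ integrated in $\varrho$, which is manifestly $\th$-independent), the singular integral is in fact absolutely convergent against $\d_e u$ plus a convergent drift piece, exactly as in Section~\ref{Section2}.

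For the regularity claims, I would argue as follows. The map $(r,\th)\mapsto p_\phi(x,x-r\th)$ is Hölder continuous of exponent $\g$ jointly in $(x,r,\th)$ with norm controlled by $\|\n\phi\|_{C^\g}\le\nu$ (extending by $p_\phi(x,x)=\n\phi(x)\cdot(-\th)$ at $r=0$), $\cG'\in C^\infty$ with bounded derivatives on bounded sets, and $Q(\phi)=\sqrt{1+|\n\phi|^2}\in C^\g$ with norm controlled by $\nu$; composing and integrating in $\varrho$ preserves $C^\g$ in $(x,r,\th)$, so $\mu\in C^\g(\R^{N-1}\times[0,\infty)\times S^{N-2})$ for each fixed $t$, with norm bounded uniformly in $t$. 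Continuity in $t$ follows from the assumed continuity $t\mapsto w(t),v(t)$ in $C^{1+\g}_{loc}$ together with the local uniform bounds, giving $\mu\in C([0,T],C^\g(\R^{N-1}\times[0,\infty)\times S^{N-2}))$. For $V$, Lemma~\ref{NonlocalOperator--1}(i) gives $\|H(\phi)\|_{C^{\g-\a}}\le C'\|\n\phi\|_{C^\g}\le C'\nu$, so $H(\phi)$ is bounded uniformly, and $\frac{\varrho\n w(t,x)+(1-\varrho)\n v(t,x)}{Q(\varrho w(t)+(1-\varrho)v(t))(x)}$ is bounded by $1$; integrating in $\varrho$ and using the joint continuity in $t$ yields $V\in C_b([0,T]\times\R^{N-1})$. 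I expect the main (though still routine) obstacle to be bookkeeping the principal-value symmetrization so that the odd part of the kernel is correctly absorbed into the drift $V$ and the even part into $\mu$, matching the exact normalization $\mu(t,x,0,\th)=\mu(t,x,0,-\th)$ asserted in the statement; everything else is a direct composition-of-smooth-maps argument already carried out in pieces in Lemmas~\ref{SemigroupLemma001}, \ref{lem:DH-explicit} and \ref{lemm:SemigropeCorollary}.
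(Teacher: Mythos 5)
Your overall strategy matches the paper's: expand $B[w(t),v(t)]$ from the defining $\varrho$-integral \eqref{eq:def-main-op-B}, plug in the decomposition $D\cH(\phi)=Q(\phi)\ti L_1^\phi+\ti L_2^\phi$ from \eqref{eq:decomp-GNMC}--\eqref{eq:tiL_1}, pass to polar coordinates via $p_\phi(x,x-r\th)=-\int_0^1\n\phi(x-r\t\th)\cdot\th\,d\t$, collect the even part into $\mu$ and the drift into $V$, and read off the regularity from the smoothness of $\cG'$ and from \eqref{eq:est-Cnmc-nu}. The paper's proof is a three-line version of exactly this.

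There is, however, a concrete error in your justification of the symmetry $\mu(t,x,0,\th)=\mu(t,x,0,-\th)$: you claim ``$p_\phi(x,x)=0$, so the value at $r=0$ is $-\cG'(0)Q(\phi)(x)$ integrated in $\varrho$, which is manifestly $\th$-independent.'' That claim is false, and it even contradicts what you write a few lines later, namely the correct extension $p_\phi(x,x-0\cdot\th)=-\n\phi(x)\cdot\th$ coming from the FTC representation. Thus $\mu(t,x,0,\th)=-\int\cG'\bigl(-(\varrho\n w(t,x)+(1-\varrho)\n v(t,x))\cdot\th\bigr)Q(\cdots)\,d\varrho$ genuinely depends on $\th$; it is not equal to $-\cG'(0)\int Q(\cdots)\,d\varrho$. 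The correct (and sufficient) reason for the symmetry under $\th\mapsto-\th$ is simply that $\cG'$ is even, so replacing $\th$ by $-\th$ flips the sign of the argument of $\cG'$ and leaves the value unchanged. This is exactly how the paper argues. Your conclusion is right, but the stated reason is not. As a secondary remark, you consistently use $\int_0^1d\varrho$, which is what \eqref{eq:def-main-op-B} demands, whereas the lemma as stated (and the decomposition \eqref{eq:B-integrro-diff}) write $\int_{-1}^1$; this appears to be an inconsistency in the paper's notation which you silently resolve one way but do not flag, so a careful write-up should note that your formula for $\mu$, $V$ comes out with $\int_0^1$.
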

	\begin{proof}
	By the fundamental theorem of calculus, we can write $ p_w(x,x-r\th)=\frac{w(x-r\th)-w(x)}{r}=-\int_0^1\n w(x-r\t\th)\cdot\th d\t$.
Using \eqref{eq:B-integrro-diff}, we get the expression of 	 $B[w(t),v(t)]$.  Moreover  from the evenness of $\cG'$  we have  $ \mu(t,x,0, \th)= \mu(t,x,0, -\th) $.  The regularity of $V$ is a consequence of \eqref{eq:est-Cnmc-nu}.
	\end{proof}
As a consequence,  of  Lemma \ref{lem:explicit-Bwv} and Proposition \ref{prop:mmp}, we have the following result. 
\begin{corollary}\label{cor:mmp-lnmc}
Under the assumptions of Lemma \ref{lem:explicit-Bwv}, let $u \in L^\infty([0,T], C^{\b}(\R^{N-1}))$, with $u(\cdot,x)\in C^1([0,T])$ and $u(t,\cdot)\in C^{1+\b}_{loc}(\R^{N-1})$,  satisfy
$$
\partial_tu+B[w(t),v(t)] u\leq 0\qquad\textrm{  in $ [0,T]\times \R^{N-1}$.}
$$
Then $\sup_{[0,T]\times \R^{N-1}}u=\sup_{x\in \R^{N-1}} u(0,x)$.
\end{corollary}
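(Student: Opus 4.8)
The plan is to obtain this as an immediate consequence of the abstract comparison principle Proposition~\ref{prop:mmp}, after rewriting the operator $B[w(t),v(t)]$ by means of Lemma~\ref{lem:explicit-Bwv}. First I would invoke Lemma~\ref{lem:explicit-Bwv} to record the pointwise identity
\begin{align*}
B[w(t),v(t)]u(x)=P.V.\int_{\R^{N-1}}\frac{u(x)-u(x+y)}{|y|^{N+\a}}\,\mu(t,x,|y|,y/|y|)\,dy+V(t,x)\cdot\n u(x),
\end{align*}
valid for $u(t,\cdot)$ in the regularity class at hand, with $\mu$ and $V$ given by the explicit formulas stated there. Substituting this into the differential inequality $\de_tu+B[w(t),v(t)]u\le 0$ produces exactly the inequality \eqref{eq:equ-sub-sol} of Proposition~\ref{prop:mmp} for the kernel $\mu$ and the drift $V$ just identified.

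Next I would check that $\mu$ and $V$ meet the structural requirements of Proposition~\ref{prop:mmp}. The strict positivity $\mu>0$ follows from the closed form in Lemma~\ref{lem:explicit-Bwv}: since $\cG'(p)=-2(1+p^2)^{-\frac{N+\a}{2}}<0$ for every $p$ and $Q(\cdot)=\sqrt{1+|\cdot|^2}\ge 1$, the integrand defining $\mu$ is strictly positive, hence so is its integral over $\varrho\in(-1,1)$. The symmetry $\mu(t,x,0,\th)=\mu(t,x,0,-\th)$ is part of Lemma~\ref{lem:explicit-Bwv} and reflects the evenness of $\cG'$. The regularity $\mu\in C([0,T],C^\g(\R^{N-1}\times[0,\infty)\times S^{N-2}))$ and $V\in C_b([0,T]\times\R^{N-1})$ is likewise supplied by Lemma~\ref{lem:explicit-Bwv}; restricting, if necessary, the H\"older exponent to $\b<\g$ (so that $C^\g\hookrightarrow C^\b$) gives in particular $\mu\in C([0,T],C^\b(\R^{N-1}\times[0,\infty)\times S^{N-2}))$ and the boundedness/continuity of $V$ demanded in Proposition~\ref{prop:mmp}. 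Finally, the hypotheses imposed on $u$ in the statement --- namely $u\in L^\infty([0,T],C^\b(\R^{N-1}))$ together with $u(\cdot,x)\in C^1([0,T])$ and $u(t,\cdot)\in C^{1+\b}_{loc}(\R^{N-1})$ --- are precisely those required of a subsolution in Proposition~\ref{prop:mmp}, so that in particular the principal-value integral above is well defined.

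With all the hypotheses in place, Proposition~\ref{prop:mmp} yields $\sup_{[0,T]\times\R^{N-1}}u=\sup_{x\in\R^{N-1}}u(0,x)$, which is the assertion of the corollary. I do not expect any genuine obstacle: the proof is essentially a matter of matching the explicit representation of Lemma~\ref{lem:explicit-Bwv} to the template of Proposition~\ref{prop:mmp}. The only points deserving a line of justification are the strict sign of $\mu$ (which is what guarantees $\ov L(t_0)v(x_0)\ge 0$ at a putative interior maximum in the proof of Proposition~\ref{prop:mmp}) and the bookkeeping of H\"older exponents so that $u$ and $\mu$ lie in compatible spaces.
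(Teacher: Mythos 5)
Your proposal is correct and takes exactly the same route as the paper, which states the corollary as an immediate consequence of Lemma~\ref{lem:explicit-Bwv} and Proposition~\ref{prop:mmp} without further detail. Your write-up merely makes explicit the verification steps (strict positivity of $\mu$ from $\cG'<0$ and $Q\ge 1$, evenness at $r=0$, the H\"older-exponent bookkeeping via $C^\g\hookrightarrow C^\b$) that the paper leaves to the reader.
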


 	\section{Proof of  the main results  }\label{Section5}
	For $0<\rho<\frac{1}{1+\a}$, $\b\in (\a,1)$ and $T>0$, we define for  the following  Banach space 
 \be \label{eq:def-E_T}
 E_T=C^\rho([0,T],\cC^{1+\b}_0(\R^{N-1}))\cap C^{1+\rho}([0,T],\cC^{\b-\alpha}_0(\R^{N-1}))
 \ee
 endowed with  the norm $\|\cdot\|_{E_T}=\|\cdot \|_{C^\rho([0,T],C^{1+\b} )}+\|\cdot \|_{C^{1+\rho}([0,T],C^{\b-\alpha} )} $. 	
	\begin{lemma} \label{Lemma001}	Let  $u_0 \in  C^{1+\g}_{loc}(\R^{N-1})\cap \cC^{1+\b}_0 (\R^{N-1})$, for some $\g>\b$, with $\n u_0\in C^\g(\R^{N-1})$. 
For $R>0$, 	we define
	\be \label{eq:def-E_T0R}
E_{T,R}:=\left\{u\in E_T\,:\,   u(0)=u_0\,,\,  \|\n u-\n u_0\|_{C^\rho([0,T],C^{ \b}(\R^{N-1}) )}   \leq R \right\}	.
\ee
		Then,   there exists  $C= C( N,\a, \b)>1$  with the property that  for    every $u,v\in  E_{T,R}  $  we have
		\begin{align*}
		\|F(u)-F(v)\|_{C^\rho([0,T],{C^{\b-\alpha}})}\leq C R(1+R+\|\n u_0\|_{C^{\b} })^C   T^\rho(1+T^\rho)   \|u-v\|_{E_T},
		\end{align*}



where $F: \cC^{1+\b}_0(\R^{N-1})\to \cC^{\b-\a}_0(\R^{N-1})$ is given by  $F(u)=-\cH(u)+D\cH(u_0) u$.
	\end{lemma}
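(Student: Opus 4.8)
The plan is to exploit that, by Corollary~\ref{lem:F-smoothXY}, $F$ is of class $C^\infty$ from $\cC^{1+\b}_0(\R^{N-1})$ to $\cC^{\b-\a}_0(\R^{N-1})$ and, crucially, that $DF(u_0)=D\cH(u_0)-D\cH(u_0)=0$, so that $F$ is \emph{quadratically} close to its linearization near $u_0$; since every element of $E_{T,R}$ coincides with $u_0$ at $t=0$, this is exactly what produces the superlinear factor $R\,T^\rho(1+T^\rho)$. First I would reduce to a bilinear expression. For $u,v\in E_{T,R}$ set $a(t)=u(t)-u_0$, $b(t)=v(t)-u_0$, $\eta(t)=u(t)-v(t)$, $w_\varrho(t)=\varrho u(t)+(1-\varrho)v(t)$ and $\zeta_\varrho(t)=w_\varrho(t)-u_0=\varrho a(t)+(1-\varrho)b(t)$. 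Writing $G(t):=F(u(t))-F(v(t))=\int_0^1 DF(w_\varrho(t))[\eta(t)]\,d\varrho$ and then, using $DF(u_0)=0$, $DF(w_\varrho(t))=\int_0^1 D^2F(u_0+\sigma\zeta_\varrho(t))[\zeta_\varrho(t),\,\cdot\,]\,d\sigma$, one obtains
$$
G(t)=\int_0^1\!\!\int_0^1 D^2F\bigl(u_0+\sigma\zeta_\varrho(t)\bigr)\bigl[\zeta_\varrho(t),\eta(t)\bigr]\,d\sigma\,d\varrho ,
$$
so that $G(t)$ is, after integration in $(\sigma,\varrho)$, bilinear in the pair $(\zeta_\varrho(t),\eta(t))$, both of which vanish at $t=0$.

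Next I would record the pointwise-in-time bounds on the ingredients. Since $a(0)=b(0)=0$ and $\|\n a\|_{C^\rho([0,T],C^\b)},\|\n b\|_{C^\rho([0,T],C^\b)}\le R$, hence also for $\zeta_\varrho$, one has $\|\n\zeta_\varrho(t)\|_{C^\b}\le R\min(1,t^\rho)$ and $\|\n\zeta_\varrho(t)-\n\zeta_\varrho(s)\|_{C^\b}\le R|t-s|^\rho$; likewise, since $\eta(0)=0$ and $\|\eta\|_{C^\rho([0,T],C^{1+\b})}\le\|u-v\|_{E_T}$, we get $\|\n\eta(t)\|_{C^\b}\le\|u-v\|_{E_T}\min(1,t^\rho)$ and $\|\n\eta(t)-\n\eta(s)\|_{C^\b}\le\|u-v\|_{E_T}|t-s|^\rho$. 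Moreover $\|\n(u_0+\sigma\zeta_\varrho(t))\|_{C^\b}\le\|\n u_0\|_{C^\b}+R$, so by Lemma~\ref{lem:F-smooth-Holder}, Corollary~\ref{lem:F-smoothXY} and the product rule (recall $D^jF=-D^j\cH$ for $j\ge2$), for $j\in\{2,3\}$,
$$
\bigl\|D^jF\bigl(u_0+\sigma\zeta_\varrho(t)\bigr)[h_1,\dots,h_j]\bigr\|_{C^{\b-\a}}\le C\bigl(1+R+\|\n u_0\|_{C^\b}\bigr)^{C}\prod_{i=1}^{j}\|\n h_i\|_{C^\b}.
$$
Plugging the $j=2$ bound into the formula for $G(t)$ and using the bounds above gives $\sup_{t}\|G(t)\|_{C^{\b-\a}}\le C(1+R+\|\n u_0\|_{C^\b})^{C}\,R\min(1,T^{2\rho})\,\|u-v\|_{E_T}$, which already has the desired form because $\min(1,T^{2\rho})\le T^\rho(1+T^\rho)$.

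For the time--H\"older seminorm I would, for $0\le s<t\le T$, telescope $G(t)-G(s)$ into three pieces coming from (a) the variation of the coefficient, treated via $D^2F(\xi(t))-D^2F(\xi(s))=\sigma\int_0^1 D^3F\bigl(u_0+\sigma(\tau\zeta_\varrho(t)+(1-\tau)\zeta_\varrho(s))\bigr)[\zeta_\varrho(t)-\zeta_\varrho(s),\,\cdot\,,\,\cdot\,]\,d\tau$; (b) the variation of $\zeta_\varrho$ in the first slot, i.e.\ a term $D^2F(\xi(s))[\zeta_\varrho(t)-\zeta_\varrho(s),\eta(t)]$; and (c) the variation of $\eta$ in the second slot, i.e.\ a term $D^2F(\xi(s))[\zeta_\varrho(s),\eta(t)-\eta(s)]$, where $\xi(r)=u_0+\sigma\zeta_\varrho(r)$. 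Applying the estimates of the previous paragraph, in each of these terms exactly one argument is a time difference (bounded by $R|t-s|^\rho$ or $\|u-v\|_{E_T}|t-s|^\rho$), exactly one argument carries a factor $\|u-v\|_{E_T}$, at least one power of $R$ is present, and every argument other than the time difference vanishes at $t=0$, hence is bounded by $R\min(1,t^\rho)$ or $\|u-v\|_{E_T}\min(1,t^\rho)$. Dividing by $|t-s|^\rho$ and using $\min(1,T^{m\rho})\le T^\rho(1+T^\rho)$ for $m\ge1$ and $R^k\le(1+R+\|\n u_0\|_{C^\b})^{C}$ (after enlarging $C$), each term — and therefore $[G]_{C^\rho([0,T],C^{\b-\a})}$ — is bounded by $CR(1+R+\|\n u_0\|_{C^\b})^{C}\,T^\rho(1+T^\rho)\,\|u-v\|_{E_T}$. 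Adding this to the sup bound of the previous paragraph yields the claim.

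The main obstacle is precisely the bookkeeping in the last step: to recover the factor $T^\rho$ one has to verify that, after one argument of each telescoped term has been spent on producing $|t-s|^\rho$, there always remains an argument that vanishes at $t=0$ (so that a $\min(1,T^\rho)$ survives). This works because the integrand defining $G$ is (at least) bilinear in the two quantities $\zeta_\varrho=w_\varrho-u_0$ and $\eta=u-v$, and \emph{both} vanish at $t=0$ — exactly because every element of $E_{T,R}$ issues from $u_0$; the quantitative $C^\infty$-estimates for $\cH$ (equivalently $F$) from Section~\ref{Section2} then reduce everything else to elementary estimates on products of H\"older norms in time.
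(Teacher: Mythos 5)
Your proposal is correct and follows essentially the same route as the paper: both write $F(u)-F(v)$ as a double integral of $D^2\cH$ (equivalently $-D^2F$, using $DF(u_0)=0$) evaluated at a convex combination of $u,v,u_0$, with arguments $\zeta=w_\lambda-u_0$ and $\eta=u-v$ that both vanish at $t=0$, then telescope the time increment into three pieces and feed each through the $C^\infty$-bounds of Corollary~\ref{lem:F-smoothXY}. The only differences are cosmetic (notation $\varrho,\sigma$ vs.\ $\lambda,\tau$, a slightly different split of which pieces carry $t$ vs.\ $s$, and a more explicit record of the $\min(1,t^\rho)$ bookkeeping), and you correctly note the key structural point that the bounds from Lemma~\ref{lem:F-smooth-Holder} and Corollary~\ref{lem:F-smoothXY} involve only $\|\n h_i\|_{C^\b}$, which is exactly what the constraint defining $E_{T,R}$ controls.
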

	
	\begin{proof}


          
          Let $T, R>0$, $u,v\in  E_{T,R}$ and $w=u-v$. Then we have 
		\begin{align*}
&		F(u)-F(v)=- \int_0^1(D \cH (\l u+(1-\l)v) - D\cH(u_0) ) [w]\, d\l\\
		&= - \int_0^1\l \int_0^1D^2 \cH (\t(\l u+(1-\l)v)+(1-\t) u_0)  [\l (u-u_0)+(1-\l)(v-u_0), w]\, d \t d\l.
		\end{align*}
Next, for $t\in [0,T]$, $\lambda, \tau \in [0,1]$ we define  $L_{\lambda,\tau}(t) \in \cC^{\b-\alpha}_0(\R^{N-1})$ by 
$$
L_{\lambda,\tau}(t):=   D^2 \cH (\t(\l u(t)+(1-\l)v(t))+(1-\t) u_0).
$$
We observe that, by  Corollary \ref{lem:F-smoothXY}, for all $s,t\in [0,T]$, 
\be\label{eq:LtLs}
\|L_{\lambda,\tau}(t)-L_{\lambda,\tau}(s)\|_{C^{\b -\alpha}} \leq CR (1+R+\|\n u_0\|_{C^{\b} })^C |s-t|^\rho 
\ee
and 
\be\label{eq:Lt}
\|L_{\lambda,\tau}(t) \|_{C^{\b -\alpha}} \leq CR (1+R+\|\n u_0\|_{C^{\b} })^C .
\ee
We write
\begin{align*}
&[F(u)-F(v) ](t)-[F(u)-F(v) ](s)\\
  &= \int_0^1\l \int_0^1\Bigl(L_{\lambda,\tau}(s)[\l (u(s)-u_0)+(1-\l)(v(s)-u_0), w(s)]\\
  &\qquad \qquad \;\: - L_{\lambda,\tau}(t)[\l (u(t)-u_0)+(1-\l)(v(t)-u_0), w(t)]\Bigr)d \t d\l\\ 
  &= \int_0^1\l \int_0^1\Bigl([L_{\lambda,\tau}(s)-L_{\lambda,\tau}(t) ] [\l (u(s)-u_0)+(1-\l)(v(s)-u_0), w(s)]\, \\
&\qquad \qquad \;\: +L_{\lambda,\tau}(t)[\l (u(s)-u_0)+(1-\l)(v(s)-u_0), w(s)-w(t)]\\
&\qquad \qquad  \;\: +L_{\lambda,\tau}(t)[\l (u(s)-u(t))+(1-\l)(v(s)-v(t)), w(t)]\Bigr)d \t \,d\l.
\end{align*}
Using \eqref{eq:LtLs}, \eqref{eq:Lt} and the fact that $u,v\in E_{T,R}$, we then get 
\begin{align}
  &\| [F(u)-F(v) ](t)-[F(u)-F(v) ](s)\|_{C^{\b-\alpha}} \nonumber \\
  &\qquad \qquad \qquad \leq C R(1+R+\|\n u_0\|_{C^{\b} })^C (t^\rho+s^\rho) |t-s|^\rho \|w\|_{E_T}.\label{eq:Fuv-holder}
\end{align}
Using the fact that $[F(u)-F(v)](0)=0$, $w\in E_T$ and taking $s=0$ in \eqref{eq:Fuv-holder}, we get 
\be \label{eq:Fuv-L-infty-rho}
\|F(u)-F(v)\|_{L^\infty([0,T],{\cC^{\b-\alpha}_0})}\leq CR (1+R+\|\n u_0\|_{C^{\b} })^C   T^{2\rho} \|w\|_{E_T}.
\ee
	The result follows from \eqref{eq:Fuv-holder} and  \eqref{eq:Fuv-L-infty-rho}.

	\end{proof} 	
 	
 		\begin{proof}[Proof of Theorem \ref{MainResult}]
	Let $u_0\in {\cC^{1+\b}_0(\R^{N-1})}\cap {C^{\b+\rho(1+\a)}_{loc} (\R^{N-1})}  $ with $\|\n u_0 \|_{C^{\b+\rho(1+\a)} } \leq \nu$. Then  by Corollary \ref{corr:SemigropeCorollary}, the operator $\cL_0:=D\cH(u_0)$  is a generator of strongly continuous analytic semigroup.   
  Now,  to solve  problem  \eqref{CauchyProblem}, we rewrite it  as
	\begin{align}
\left\{
\begin{array}{rll}
\de_tu+\cL_0u&=F(u)&\hspace{.5cm}\mbox{in }\;[0,T]\times 	\mathbb{R}^{N-1}	\\
u(0)&=u_0&\hspace{.5cm}\mbox{in }\;\mathbb{R}^{N-1}
\end{array}
\right.
\end{align}	
and use a fixed point argument, with $F(u):=-\cH(u)+D\cH(u_0) [u]=-\cH(u)+\cL_0 u$. 
Let  $u\in E_T$ and recall \eqref{eq:def-E_T} for the definition of $E_T$. Note that by \eqref{eq:graph-norm-lin-nmc-mean} and Lemma\ref{NonlocalOperator--1},   $F(u)(0)-\cL_0u(0)=-\cH(u_0) \in  \cC^{\b-\a}_0(\R^{N-1}) \cap C^{ \b+\rho(1+\a)-\a}(\R^{N-1})$. Hence    by Corollary  \ref{corr:SemigropeCorollary},  there exists a  unique   function  $\Phi(u)\in E_T$  satisfying
\begin{align}\label{InitialProblem02}
\left\{
\begin{array}{rll}
\de_t \Phi(u)+\cL_0\Phi(u)&=F(u)&\hspace{.5cm}\mbox{in }\;[0,T] 	\times\mathbb{R}^{N-1}	\\
\Phi(u)(0)&=u_0&\hphantom{.5cm}\mbox{in }\;\mathbb{R}^{N-1}.
\end{array}
\right.
\end{align}	
We define 
$$
		\calE_{T,R}:=\left\{u\in E_T\,:\, u(0)=u_0\,,\, \|u-u_0\|_{E_T}\leq R\right\}.
		$$
Clearly a fixed point of the map $\Phi: E_T\to E_T$ in the   set $\calE_{T,R}$ will be a solution to \eqref{CauchyProblem}.\\

	We claim that, provided $\|\n u_0 \|_{C^{\b+\rho(1+\a)}(\R^{N-1})} \leq \nu$,   there exist a large constant  $R>0$ and a small constant  $T>0$, both depending only on $ N,\a,\b,\rho,\g$ and $ \nu $,   such that 
		\begin{enumerate}
			\item[$(i)$]$\Phi(\calE_{T,R})\subset \calE_{T,R}$ .
			\item[$(ii)$]  $\Phi$ is a contraction on $\calE_{T,R}.$
		\end{enumerate}
To prove  $(i)$,  we use 	 Corollary  \ref{corr:SemigropeCorollary}, \eqref{eq:est-Cnmc-nu}  and   Lemma \ref{Lemma001}, to get    $C=C(\rho,N,\a,\b, \g,\nu )>1$    such that  for all $T\in (0,1)$,
		\begin{align*}
		  	\|\Phi(u)-u_0\|_{E_T} &\leq C \left(  \|F(u)-\cL_0 u_0 \|_{C^\rho([0,T],{C^{\b-\alpha}})}+ \|\cH( u_0)\|_{C^{\b+\rho(1+\a)-\a} } \right)\\
		&\leq C \left(  \|F(u)-F(u_0)\|_{C^\rho([0,T],{C^{\b-\alpha}})}+ \|\n u_0 \|_{C^{\b+\rho(1+\a)} } \right) \\
		&\leq  C \left(  T^{\rho}(1+T^{\rho}) R(1+ R+\|\n u_0\|_{C^\b} )^C+ \|\n u_0 \|_{C^{\b+\rho(1+\a)} }\right).
		\end{align*}
We can thus let  $R=2\nu C$ and choose $T >0$ small, so that  $\Phi(u)\in  \calE_{T,R}$.\\

To prove $(ii)$, we let  $u,v\in 	 \calE_{T,R}$. Then  $W:=\Phi(u)-\Phi(v)$ satisfies
$$
\left\{
\begin{array}{rll}
\de_tW+\cL_0 W&=F(u)-F(v)&\hspace{.5cm}\mbox{in }\;[0,T] 	\times\mathbb{R}^{N-1}	\\
W(0)&=0&\hphantom{.5cm}\mbox{in }\;\mathbb{R}^{N-1}.
\end{array}
\right.
$$
Hence by 	Corollary  \ref{corr:SemigropeCorollary}  and   Lemma \ref{Lemma001}, there exists  $C=C( \rho,N,\a,\b, \g, \nu)>1$ such that for  $T\in (0,1)$,
		\begin{align*}
		\|\Phi(u)-\Phi(v)\|_{E_T} &\leq C \|F(u)-F(v)\|_{C^\rho([0,T],{C^{\b-\alpha}})}  \leq C  T^{\rho} R (1+ R+\|\n u_0\|_{C^\b} )^C \|u-v\|_{E_T}.
		\end{align*}
Now  decreasing $T$, if necessary, we obtain $\|\Phi(u)-\Phi(v)\|_{E_T}\leq  \frac{1}{2}\|u-v\|_{E_T}$, which is $(ii)$.
		This ends the proof of the claim. \\
We can thus apply the Banach fixed point theorem on $\calE_{T,R}$ to get a unique fixed point $u\in \calE_{T,R}$ of $\Phi$.  The proof of  the theorem  is thus finished.  

\end{proof}

%
Theorem   \ref{MainResult1} and Theorem \ref{MainResult1-part-2}  are  consequences of the following result which deals with bounded functions.
\begin{theorem}\label{MainResult1-bnd}
Under the assumptions of Theorem \ref{MainResult},  we have the following properties.
\begin{enumerate}
\item[(i)]    For all $\b'\in (\a,\b)$, $\rho \in (0,\frac{\a}{1+\a}]$,   $k\in \N $ and $t\in (0,T]$, we have $u(t)\in \cC^{k+1+\b'}_0(\R^{N-1}) $ and  there exists $C_k>0$ only depending on $k,\rho,\a,\b,\g,N,\nu,\b'$ and $T$  such that 
\be \label{eq:main-2}
	 	  \|t^k \n u \|_{C^{\rho}([0,T],C^{k+ \b'} )}  \leq C_k  .
\ee
\item[(ii)] If, in addition, $ \n u_0\in C^{1+\g_\rho}(\R^{N-1}) $  for $\rho\in (0,\frac{1}{1+\a})$, then  there exists $C>0$ only depending on $\rho,\a,\b,\g,N,\nu,T$ and $\b'$     such that 
\be \label{eq:main-3}
	 	  \|\n  u \|_{C^{\rho}([0,T],C^{1+ \b'} )}  \leq C  \|\n u_0\|_{ C^{1+\g_\rho} }  ,
\ee
where $\g_\rho=\b+\rho(1+\a)$.

\end{enumerate}
\end{theorem}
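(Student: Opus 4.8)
The plan is to exploit the exact translation invariance of the operator $u\mapsto\cH(u)$, which reduces both assertions to applications of the linear parabolic theory of Sections~\ref{Section3}--\ref{Section2} to difference quotients of $u$. For $h\in\R^{N-1}$ the shifted function $u_h:=u(\cdot,\cdot+h)$ solves \eqref{CauchyProblem} with datum $u_0(\cdot+h)$, so by the mean value identity $\cH(u_h)-\cH(u)=B[u_h(t),u(t)](u_h-u)$ (with $B[\cdot,\cdot]$ as in \eqref{eq:def-main-op-B}) the difference quotient $w^h:=(u_h-u)/|h|$ solves the linear problem
\[
\de_t w^h+\cL_0 w^h=\bigl(\cL_0-B[u_h(t),u(t)]\bigr)w^h=:G^h\quad\text{in }[0,T]\times\R^{N-1},\qquad w^h(0)=\frac{u_0(\cdot+h)-u_0}{|h|},
\]
where $\cL_0=D\cH(u_0)$ generates a strongly continuous analytic semigroup on $\cC^{\b-\a}_0(\R^{N-1})$ by Corollary~\ref{corr:SemigropeCorollary}. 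Since $w^h(t)\in\cC^{1+\b}_0(\R^{N-1})$ and $B[u_h(t),u(t)]\colon\cC^{1+\b}_0\to\cC^{\b-\a}_0$ is bounded (Lemma~\ref{lemm:SemigropeCorollary} with $\g=\b$, as $u_h(t),u(t)$ lie in some $\calO^\b_{\nu'}$), this equation is meaningful at the level of the spaces of Theorem~\ref{MainResult}, with no circularity. I would obtain $h$-uniform bounds for $w^h$ and then let $h\to0$, using $w^{he_j}\to\de_j u$ in $L^\infty([0,T],C^{\b'}(\R^{N-1}))$, valid because $\n u\in C^\rho([0,T],C^{\b})$.

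I would treat (ii) first. Under the extra hypothesis $\n u_0\in C^{1+\g_\rho}$ one has $w^h(0)\in\cC^{1+\b}_0$ with $\|w^h(0)\|_{C^{1+\b}}\le C\|\n u_0\|_{C^{1+\g_\rho}}$ uniformly in $h$, and the compatibility datum $G^h(0)-\cL_0 w^h(0)=-B[u_0(\cdot+h),u_0]w^h(0)=-(\cH(u_0)(\cdot+h)-\cH(u_0))/|h|$ is bounded in $C^{\g_\rho-\a}$ uniformly in $h$, because the mapping properties of $\cH$ (loss of order $1+\a$) give $\cH(u_0)\in C^{1+\g_\rho-\a}$ and because of Remark~\ref{rem:DB_0-rho}. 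For the right-hand side I would write $\cL_0-B[u_h(t),u(t)]=\int_0^1\!\bigl(D\cH(u_0)-D\cH(\s u_h(t)+(1-\s)u(t))\bigr)\,d\s$ and use Corollary~\ref{lem:F-smoothXY} together with $\|u(t)-u_0\|_{C^{1+\b}}\le\|u-u_0\|_{E_T}\,t^\rho$ to get $\|G^h\|_{C^\rho([0,T],C^{\b-\a})}$ bounded, modulo a factor $T^\rho$ in front of $\|w^h-w^h(0)\|_{E_T}$ (which appears because $w^h(0)$ absorbs the $h$-blow-up while $\|w^h(t)-w^h(0)\|_{C^{1+\b}}\le T^\rho\|w^h-w^h(0)\|_{E_T}$). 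Feeding this into the a priori estimate of Corollary~\ref{corr:SemigropeCorollary}, shrinking $T$ to absorb that term, and continuing up to the original $T$ (the constants being non-decreasing in $T$), one gets $\|w^h\|_{E_T}\le C\|\n u_0\|_{C^{1+\g_\rho}}$ uniformly in $h$; letting $h\to0$ yields $\n u\in C^\rho([0,T],C^{1+\b})$ with the stated bound, and \eqref{eq:main-3} then follows for every $\b'\in(\a,\b)$ by the interpolation inequality \eqref{eq:interpol}.

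For (i) only $\|\n u_0\|_{C^{\g_\rho}}\le\nu$ is available, so $w^h(0)$ is merely bounded in $C^{\g_\rho}(\R^{N-1})$ uniformly in $h$. Here I would invoke Proposition~\ref{prop:car-inerpol-space}, which identifies $C^{\g_\rho}\cap\cC^{\b-\a}_0$ with the intermediate space $\calD_{\cL_0}(\theta,\infty)$ for $\theta:=\rho+\tfrac{\a}{1+\a}$, noting that $\theta<1$ exactly because $\rho<\tfrac{1}{1+\a}$; thus $w^h(0)$ is bounded in $\calD_{\cL_0}(\theta,\infty)$ uniformly in $h$. The smoothing bounds \eqref{eq:analytic-sm}, combined with the time-weighted maximal regularity for analytic semigroups and the control of $G^h$ above, then give $\|w^h(t)\|_{\cC^{1+\b'}_0}\le C\,t^{-(1-\theta)}$ for $t\in(0,T]$ uniformly in $h$, together with the corresponding weighted H\"older-in-time estimate. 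Passing to $h\to0$ gives $\|t^{1-\theta}\n u\|_{C^\rho([0,T],C^{1+\b'})}\le C$; since $1-\theta<1$ and $t\mapsto t^\theta$ lies in $C^\rho([0,T])$ (as $\theta\ge\rho$), this contains the case $k=1$ of \eqref{eq:main-2}. The cases $k\ge2$ I would obtain by induction: once $\n u(t)\in C^{k+\b'}$ for $t>0$, the coefficients of $D\cH(u(t))$ lie in $C^{k+\b'}$ and $D\cH(u(t))$ still generates an analytic semigroup by Lemma~\ref{lemm:SemigropeCorollary}; differentiating $\de_tu+\cH(u)=0$ shows that each $\de^\mu u$ with $|\mu|=k+1$ solves $\de_t(\de^\mu u)+D\cH(u(t))[\de^\mu u]=g_\mu$, where $g_\mu$ is a finite sum of terms $D^j\cH(u(t))[\de^{\nu_1}u,\dots,\de^{\nu_j}u]$ with $\sum|\nu_i|=k+1$ and each $|\nu_i|<k+1$, hence controlled by the inductive hypothesis; applying the weighted maximal regularity of Theorem~\ref{thhhh:Corollary0002} on the interval $[t/2,t]$ (with initial value $\de^\mu u(t/2)$) gains one further derivative and one extra power of the weight, which gives \eqref{eq:main-2}. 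I note that at every stage the domain of the semigroup is a H\"older space of exponent strictly between $1+\a$ and $2$, which is why one differentiates the equation rather than bootstrapping directly --- this keeps one inside the range $\g<1$ required by Lemma~\ref{lemm:SemigropeCorollary} and Proposition~\ref{prop:car-inerpol-space}.

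The main obstacle I expect is the coupling between the source term $G^h$ (resp. $g_\mu$) and the unknown: the right-hand side of the linearized equation contains the very norm one is trying to bound. Breaking it requires the $T^\rho$-smallness of $G^h$ coming from $u(0)=u_0$, followed by a continuation argument in $T$; and in part (i) it requires careful bookkeeping of the powers of $t$ along the induction so that they match the prescribed weights $t^k$ while all H\"older exponents stay above $\b'$ and below $1$. Verifying the compatibility conditions (that $G^h(0)-\cL_0 w^h(0)$, resp. $g_\mu(t_0)-D\cH(u(t_0))[\de^\mu u(t_0)]$, lies in the relevant interpolation space) at each step, via the regularity of $\cH$ and of its linearization established in Section~\ref{Section2}, is the remaining routine but essential point.
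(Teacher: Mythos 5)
Your overall strategy -- reduce both statements to linear parabolic estimates for difference quotients $\t_h u = (u(\cdot,\cdot+h)-u)/|h|$ -- is the same as the paper's, but you implement it quite differently: you freeze the coefficient at $\cL_0 = D\cH(u_0)$ and treat $G^h = (\cL_0 - B[u_h(t),u(t)])w^h$ as a source term to be absorbed via $T^\rho$-smallness and a continuation argument, whereas the paper keeps the time-dependent operator $L_h(t) = B[u(t,\cdot+h),u(t)]$ and applies the nonautonomous maximal-regularity result \cite[Proposition 6.1.3]{lunardi2012analytic} directly. Both routes to part (ii) are viable, though yours is more cumbersome: the absorbed term is of full order $1+\a$ in $w^h$, so the continuation has to be done over a partition of $[0,T]$ with coefficients refrozen at each subinterval, which you only sketch.

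For part (i), however, there is a genuine gap. You invoke Proposition~\ref{prop:car-inerpol-space} at the exponent $\theta=\rho+\tfrac{\a}{1+\a}$ in order to place $w^h(0)\in C^{\g_\rho}\cap\cC^{\b-\a}_0$ in $\calD_{\cL_0}(\theta,\infty)$, but the proposition requires $\theta<\min\bigl(1,\tfrac{1}{1+\a}\bigr)=\tfrac{1}{1+\a}$, not merely $\theta<1$. Under the hypothesis $\rho\le\tfrac{\a}{1+\a}$ of part (i), one has $\theta\le\tfrac{2\a}{1+\a}$, and $\tfrac{2\a}{1+\a}\ge\tfrac{1}{1+\a}$ as soon as $\a\ge\tfrac12$; so for $\a\in[\tfrac12,1)$ and $\rho$ near $\tfrac{\a}{1+\a}$ the proposition simply does not apply. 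The paper avoids needing any interpolation at a stretched exponent by a different and cleaner device: after differentiating the equation $k$ times it multiplies by $t^{k+1}$, obtaining an unknown $v=t^{k+1}\de^k(\t_h u)$ with $v(0)=0$, so the only compatibility datum is the source $(k+1)t^k\de^k(\t_h u)+t^{k+1}f$ at $t=0$, which vanishes for $k\ge1$ and for $k=0$ equals $\t_h u_0\in C^{\b'}$; the restriction $\rho\le\tfrac{\a}{1+\a}$ is then used precisely to ensure $\b'+\rho(1+\a)-\a\le\b'$, i.e.\ that $C^{\b'}\cap\cC^{\b'-\a}_0\subset\calD_{L_0(0)}(\rho,\infty)$ with the interpolation parameter kept at $\rho$, safely inside the range of Proposition~\ref{prop:car-inerpol-space}. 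Your induction for $k\ge2$ by restarting at $t/2$ is plausible in spirit, but the paper's approach --- commutator source $f$ from differentiating the explicit kernel representation, zero initial datum via the $t^{k+1}$ weight, and the nonautonomous estimate --- is tighter and avoids the interpolation issue altogether.
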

	\begin{proof}
We define   $\t_hu(t,x)=\frac{u(t,x+h)-u(t,x)}{|h|}$, for $0<|h|<1$. From \eqref{CauchyProblem},  we deduce that 
\be \label{InitialProblem02-h}
\left\{
\begin{array}{rll}
\de_t \t_hu +L_h(t) \t_hu&=0 &\hspace{.5cm}\mbox{in }\;[0,T] \times\mathbb{R}^{N-1}	\\
\t_hu (0)&=\t_hu_0 &\hspace{.5cm}\mbox{in }\;\mathbb{R}^{N-1},
\end{array}
\right.
\ee
where for all $a\in \R^{N-1}$, we define 
\be \label{eq:defLat}
L_a(t) v:=\int_0^1D\calH(\varrho u(t,\cdot+a)+(1-\varrho)u(t))[v]\, d\varrho.
\ee
We observe, from Lemma \ref{lem:explicit-Bwv},  that
 \begin{align}
L_a(t)u(x)&=P.V.\int_{\R^{N-1}}\frac{u(t,x)-u(t,x+y)}{|y|^{N+\a}} \mu(t,x,|y|, y/|y|) \, dy+ V(t,x)\cdot \n u(t,x) \nonumber
\end{align}
where, for  $(x,r,\th)\in \R^{N-1}\times [0,\infty)\times S^{N-2}$  and $a\in \R^{N-1}$,
$$
\mu(t,x,r,\th):=-\int_{-1}^1\cG'(\varrho p_{u(t,\cdot+a)}(x,x-r\th )+(1-\varrho) p_{u}(x,x-r\th)) Q(\varrho u(\cdot+a)+(1-\varrho)u)(x)  d\varrho
$$
and
$$
V(t,x):=\int_{-1}^1 \frac{\varrho \n u(t,x+a)+(1-\varrho) \n u(t,x) }{ Q(\varrho u(t,\cdot+a)+(1-\varrho) u(t,\cdot))(x)}  H(\varrho u(t,\cdot+a)+(1-\varrho)u(t))(x)  d\varrho.
$$ 
We recall that $\cG'(p)=-(1+p^2)^{-\frac{N+\a}{2}}$.\\
We   now consider $(i)$. Clearly, 
 by Lemma \ref{lemm:SemigropeCorollary}, for all $v\in   C^{1+\b}(\R^{N-1})$, 
\be \label{eq:Latv-nv}
 \|L_a(t) v\|_{ C^{\b-\a}}\leq C   \| \n v\|_{  C^{\b}}  ,
\ee
and 
\be \label{eq:Latsv-nv}
\|[L_a(t)-L_a(s)] v\|_{C^{\b-\a}}\leq C |t-s|^\rho  \| \n v\|_{   C^{\b}}, 
\ee
  with $C$ independent on $a$. Again by  Lemma  \ref{lemm:SemigropeCorollary}, for all $\b'\in (\a,\b)$ and $ a\in \R^{N-1}$,  
$$
L_a(t): {\cC^{1+\b'}_0(\R^{N-1})} \to {\cC^{\b'-\a}_0(\R^{N-1})} 
$$
 is a generator of a strongly continuous analytic semigroup on $ {\cC^{\b'-\a}_0(\R^{N-1})}  $, uniformly in $t\in [0,T]$ and $a$.   

We also observe, from    Proposition \ref{prop:car-inerpol-space},  that for all $a\in \R^{N}$, 
\be\label{eq:calDLh0}
C^{\b'+\rho(1+\a)-\a}(\R^{N-1})\cap {\cC^{\b'-\a}_0(\R^{N-1})} \subset  \calD_{L_a(0)}(\rho,\infty)\qquad\textrm{ for all $\rho\in (0,\frac{1}{1+\a})$.}
\ee
We now prove \eqref{eq:main-2} by induction. Indeed, since   $\rho\leq  \frac{\a}{1+\a}$, we have    $\b'+\rho  (1+\a)-\a\leq  \b'$. \\
We show by induction that for    all   $k\in \N $ and $t\in (0,T]$,  
\be\label{eq:smooth-ut}
u(t)\in \cC^{k+1+\b'}_0(\R^{N-1}) \qquad\textrm{ and } \qquad \| t^{k} \n u(t)\|_{C^{\rho}([0,T],C^{k +\b'})} \leq C_k ,
\ee
for some $C_k=C_k(N,\a,\b,\b',\rho,\nu)>0$.\\
%
Clearly  \eqref{eq:smooth-ut} holds for $k=0$ by the statement of the lemma. We assume that   \eqref{eq:smooth-ut} holds up to order  $k\geq 1$ and we prove the result for $k+1$. 

Consider $L_a(t)$ given by \eqref{eq:defLat}, which we can be written  as  (recall \eqref{eq:def-deeo} and \eqref{eq:decomp-GNMC})
\begin{align*}
L_a(t)u(x)
%
&=\frac{1}{2} \int_0^\infty\int_{S^{N-2}} r^{-2-\a}\d_eu(x,r,\th)(\mu(t,x,r,\th)+\mu(t,x,r,-\th)) \, drd\th \nonumber\\
&+ \int_0^\infty \int_{S^{N-2}} r^{-2-\a}\d_ou(x,r,\th)(\mu(t,x,r,\th)-\mu(t,x,r,-\th))\, drd\th + V(t,x)\cdot \n u(t,x), 
\end{align*}
with $  \mu(t,x,0, \th )=  \mu(t,x,0, -\th )$.
From this  and Lemma \ref{SemigroupLemma001}, we can differentiate \eqref{InitialProblem02-h} $k$ times, so that,   letting $U=\frac{\de ^k}{\de_{x_1\dots x_k}} (\t_h u) $, we then  get 
\be \label{InitialProblem02---ppp}
\begin{array}{rll}
\de_t U + L_h(t) U&=f(t,x) &\hspace{.5cm}\mbox{in }\;\mathbb{R}^{N-1}\times(0,T] 	,
\end{array}
\ee
where 
\begin{align*}
f(t,x)&:=\sum_{S\in \calS_{k-1}}P.V.\int_{\R^{N-1}}\frac{ \de^{|S|} }{\prod_{i\in S}\de x_{i}} \frac{(\t_h u)(x)- (\t_h u)(x-y)}{|y|^{N+\a}} \frac{\de^{k-|S|} }{\prod_{i\not\in S}\de x_{i}}  \mu(t,x,|y|, y/|y|)  \, dy\\
&+\sum_{S\in \calS_{k-1}}  \frac{ \de^{|S|} }{\prod_{i\in S}\de x_{i}}(\t_h \n u)(x) \cdot \frac{\de^{k-|S|} }{\prod_{i\not\in S}\de x_{i}}    V(t,x)
\end{align*}
and  $\calS_{k-1}$ is the set of subsets of $\{1,\dots, k-1\}$.
By \eqref{eq:smooth-ut} and the smoothness of $\cG'$,  for all $S\in \calS_{k-1}$,  
$$
\left\| t^{|S|}  \frac{ \de^{k-|S|}  \mu}{\prod_{i\not\in S}\de x_{i}}  \right\|_{C^{\rho}([0,T], C^{\b'})}\leq C_{k-|S|}
$$
  and   in addition by Lemma \ref{lem:F-smooth-Holder}, 
$$
\left\| t^{|S|+1} \frac{ \de^{|S|}  (\t_h \n u) }{\prod_{i\in S}\de x_{i}}    \right\|_{C^{\rho}([0,T], C^{\b'})}\leq C_{|S|+1}, \qquad \left\|t^{k-|S|}\frac{\de^{k-|S|} }{\prod_{i\not\in S}\de x_{i}}    V \right\|_{C^{\rho}([0,T], C^{\b'-\a})}\leq C_{ k-|S|}.
$$
It then follows, from \eqref{eq:smooth-ut} and  Lemma \ref{SemigroupLemma001}, that  $f(t)\in {\cC^{\b'-\a}_0} (\R^{N-1})$ and  
\be \label{eq:tkp1f}
\left\| t^{k+1} f \right\|_{C^{\rho}([0,T], C^{\b'-\a})}\leq C C_{k}, \qquad t^{k+1} f \big|_{t=0}=0.
\ee
We then define  $v= t^{k+1} \frac{\de ^k (\t_h u)}{\de_{x_1\dots x_k}}=t^{k+1} U\in C^{\rho }([0,T], {\cC^{1+\b'}_0} )$, so that, by \eqref{InitialProblem02---ppp}, 
\be 
\label{InitialProblem022}
\begin{array}{rll}
\de_t v  +L_h(t)  v&=(k+1)t^k  \frac{\de ^k (\t_h u)}{\de_{x_1\dots x_k}}   (t,x)       + t^{k+1} f(t,x) &\qquad\mbox{in }\;\mathbb{R}^{N-1}\times(0,T] 	\\
v(0)&= 0 &\qquad\mbox{in }\;\mathbb{R}^{N-1}\times\{0\}.
\end{array}
\ee
 Letting $g^k:= t^k   \frac{\de ^k(\t_h u)}{\de_{x_1\dots x_k}}  $, then \eqref{eq:smooth-ut} implies that   $g^k\in C^{\rho}([0,T], \cC^{\b'}_0 )$ and  in addition, by  \eqref{eq:calDLh0} and the choice of $\rho\leq  \frac{\a}{1+\a}$,  we find that  
\be\label{eq:dxikthv}
\|g^k \|_{C^{\rho}([0,T], C^{\b'-\a})}+ \|g^k(0)\|_{\calD_{L_0(0)}(\rho,\infty)}\leq \|g^k \|_{C^{\rho}([0,T], C^{\b'})}+C \|g^k(0)\|_{C^{\b'}} \leq C_k.
\ee 
In view of \eqref{eq:Latv-nv} and \eqref{eq:Latsv-nv}, we can thus apply \cite[Proposition 6.1.3]{lunardi2012analytic} to the equation \eqref{InitialProblem022} and use \eqref{eq:dxikthv} together with \eqref{eq:tkp1f} and  deduce that  
\begin{align*}
\|  v\|_{C^{\rho }([0,T], C^{1+\b'})}&\leq C\left(  \|g^k \|_{C^{\rho}([0,T], C^{\b'-\a})}+ \|g^k(0)\|_{\calD_{L_0(0)}(\rho,\infty)}+ \left\| t^{k+1} f \right\|_{C^{\rho}([0,T], C^{\b'-\a})} \right)\\
&\leq C_{k+1} .
\end{align*}
Letting now $h\to 0$, we finally get 
$$
\|  t^{k+1}  \n u \|_{C^{\rho }([0,T], C^{k+1+\b'})}\leq C_{k+1} .
$$
This completes the proof of $(i)$.\\
Finally for $(ii)$,  we use    \eqref{eq:Latv-nv} to  obtain
$$
\| L_h(0) \t_hu_0\|_{\calD_{L_a(0)}(\rho,\infty)}\leq  C  \|\n \t_h u_0\|_{C^{ \b'+\rho(1+\a) }}\leq C  \|\n \t_hu_0\|_{C^{\g_\rho}} .
$$
We can thus apply  \cite[Proposition 6.1.3]{lunardi2012analytic}  to \eqref{InitialProblem02-h}  to get   
$$
\|\t_hu\|_{C^{\rho}([0,T], C^{1+\b'})   }\leq C\left(  \|\t_h u_0\|_{C^{1+\b}}+ \|\n \t_hu_0\|_{C^{\g_\rho}} \right).
$$
Letting $|h|\to 0$ in the above estimate, we find that 
$$
\|\n u\|_{C^{\rho}([0,T], C^{1+\b'})   } \leq  C  \|\n u_0\|_{C^{1+{\g_\rho}}}  .
$$
That is  \eqref{eq:main-3}.
	\end{proof}

\begin{proof}[Proof of Theorem \ref{MainResult1} and  Theorem \ref{MainResult1-part-2}  ]
For $n\in \N$, we let  $\eta_n\in C^\infty_c(\R^{N-1})$   such that $\eta\equiv 1$ for $|x|\leq n$, 	 $\eta\equiv 0$ for $|x|\geq 2n$	 and $|D^k \eta_n|\leq\frac{1}{n^k}$ on $\R^{N-1}$.
%
It then follows  that $\eta_n u_0\in \cC^{1+\b }_0 (\R^{N-1})$  thanks to  Proposition \ref{prop:car-C_0} and  that
\be \label{eq:etanu0n}
\|\n (\eta_n u_0)\|_{C^{\g_\rho} }\leq 2 \|\n   u_0\|_{C^{\g_\rho} }\leq 2\nu,
\ee
where $\g_\rho=\b+\rho (1+\a)$.
By Theorem \ref{MainResult}  there exists a unique function   $u^n\in  E_T	$ solving
\begin{align}
\label{eq:nmc-flow-u-n}
	\left\{
	\begin{array}{rll}
	\de_t u^n+\cH(u^n)&=0&\hspace{.5cm}\mbox{in }\;[0,T]\times\mathbb{R}^{N-1}	\\
	u^n(0)&=\eta_n u_0&\hspace{.5cm}\mbox{in }\;\mathbb{R}^{N-1}
	\end{array}
	\right.
	\end{align}
 and 	satisfying 
\be \label{eq:norminET-cut}
	 	  \| u^n - \eta_n u_0 \|_{E_T}  \leq C_0(N,\a,\b,\g,\rho,\nu)  ,
\ee
%
%
%
%
%

Since, by \eqref{eq:etanu0n},  the sequence   $(\n (\eta_n u_0))_n$	 is bounded in $ C^{\b}(\R^{N-1})$,  it then follows, from \eqref{eq:norminET-cut} and  the   Arzel\`a-Ascoli theorem that, for $\rho'<\rho$ and $\b'<\b$, up to a subsequence, the sequence $(u^n)_n$ converges in $C^{\rho'}([0,T],C^{1+\b'}_{loc}(\R^{N-1}))\cap C^{1+\rho'}([0,T],C^{\b'-\a}_{loc}(\R^{N-1}))$ to some function  $u\in C^{\rho}([0,T],C^{1+\b}_{loc}(\R^{N-1}))\cap C^{1+\rho}([0,T],C^{\b-\a}_{loc} (\R^{N-1}))$. In particular $\cH(u^n) \to \cH(u)$  pointwise locally uniformly.   Hence passing to the limit in \eqref{eq:nmc-flow-u-n}, we get
\begin{align}
\label{eq:nmc-flow-u-n-zz}
	\left\{
	\begin{array}{rll}
	\de_t u+{\sqrt{1+|\nabla u|^2}} H(u)&=0&\hspace{.5cm}\mbox{in }\;[0,T]	\times\mathbb{R}^{N-1}\\
	u(0)&=  u_0&\hspace{.5cm}\mbox{in }\;\mathbb{R}^{N-1}.
	\end{array}
	\right.
	\end{align}
Letting $n\to \infty$ in \eqref{eq:norminET-cut}, we then see that  
$   \| u-u_0 \|_{C^{\rho}([0,T],C^{ \b} )\cap C^{1+\rho}([0,T],C^{ \b-\a} )}\leq   C_0'' .$ \\
  
For the uniqueness, we consider $v\in C^{\rho}([0,T],C^{1+\b}_{loc}(\R^{N-1}))\cap C^{1+\rho}([0,T],C^{\b-\a}_{loc}(\R^{N-1}))$ satisfying  \eqref{eq:nmc-flow-u-n-zz} 
	and  $   v-u_0 \in {E_T} $.  Then $w=u-v\in E_T$ satisfies
	 \begin{align}\label{eq:equ--solBw-mmp}
	\left\{
	\begin{array}{rll}
	\de_t w+B[u(t),v(t)]w&=0&\hspace{.5cm}\mbox{in }\;[0,T]	\times\mathbb{R}^{N-1}\\
	w(0)&= 0&\hspace{.5cm}\mbox{in }\;\mathbb{R}^{N-1},
	\end{array}
	\right.
	\end{align}
	where, $B[u,v]w=\int_0^1D\cH(\tau u+(1-\tau)v)[w]\, d \tau$.
	Hence by Corollary  \ref{cor:mmp-lnmc}, we deduce that $w=0$.\\
	Now \eqref{eq:main-3-cor}    is a consequence of   \eqref{eq:main-3}, by compactness and thus  the proof of Theorem \ref{MainResult1} is thus complete. \\
Using  \eqref{eq:main-2}, we obtain  \eqref{eq:main-2-cor} thanks to the   Arzel\`a-Ascoli theorem, which is Theorem \ref{MainResult1-part-2}.\\

\end{proof}		 

 Our next result shows that the (global) sign of the initial condition $u_0$ and the one of the the initial fractional mean curvature $H(u_0)$ are preserved.  It also completes the proof of  Theorem  \ref{th:mt-univ-est}.
\begin{lemma}\label{lem:mmp-u-sol-zero}
Under the assumptions of  Theorem \ref{MainResult1}, we have the following results.
\begin{enumerate}
\item[(i)]  $\|\n u\|_{L^\infty((0,T)\times \R^{N-1})}\leq \|\n u_0\|_{L^\infty}$,
\item[(ii)]  $\displaystyle\sup_{[0,T]\times \R^{N-1}} \de_t u=-\inf_{ \R^{N-1}} \cH(u_0)\qquad$ and $\qquad\displaystyle\inf_{[0,T]\times \R^{N-1}} \de_t u=-\sup_{ \R^{N-1}} \cH(u_0)$.

\item[(iii)] If moreover $u_0\in L^\infty(\R^{N-1})$ then  
$$
\displaystyle\sup_{[0,T]\times \R^{N-1}} u=\sup_{ \R^{N-1}} u_0\qquad \textrm{ and }\qquad\displaystyle\inf_{[0,T]\times \R^{N-1}} u=\inf_{ \R^{N-1}} u_0
$$
\end{enumerate}
\end{lemma}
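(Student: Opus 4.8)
The plan is to derive all three assertions from the nonlocal comparison principle of Corollary~\ref{cor:mmp-lnmc}, applied to suitable difference quotients of the solution $u$ produced by Theorem~\ref{MainResult1}: to space difference quotients for~(i), to time difference quotients for~(ii), and directly to $u$ for~(iii). The common preliminary remark is that, since $u-u_0\in E_T$ with $\|u-u_0\|_{E_T}\le C_0$ and $\|\n u_0\|_{C^{\g_\rho}}\le\nu$ (so in particular $\|\n u_0\|_{C^{\b}}\le C\nu$), the solution satisfies $u(t,\cdot)\in\calO^{\b}_{\nu'}$ for every $t\in[0,T]$, for a fixed $\nu'=\nu'(\nu,C_0,\b,\g_\rho)$, while $u(\cdot,x)\in C^1([0,T])$ for each $x$ and $u(t,\cdot)\in C^{1+\b}_{\loc}(\R^{N-1})$ for each $t$. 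These are exactly the structural hypotheses of Lemma~\ref{lem:explicit-Bwv} and Corollary~\ref{cor:mmp-lnmc} (with $\nu$ there replaced by $\nu'$), so the comparison principle is available for every linearized operator $B[w(t),v(t)]$ built out of translates and time-shifts of $u$.

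For~(i), I would fix $h\in\R^{N-1}\setminus\{0\}$, set $\t_hu(t,x)=\frac{u(t,x+h)-u(t,x)}{|h|}$, and recall from \eqref{InitialProblem02-h}--\eqref{eq:defLat} that $\t_hu$ solves $\partial_t(\t_hu)+L_h(t)\,\t_hu=0$ with $L_h(t)=B[u(t,\cdot+h),u(t,\cdot)]$. Writing first-order differences of $u$ via $\n u$ gives $\|\t_hu(t)\|_{L^\infty}\le\|\n u(t)\|_{L^\infty}$ and $[\t_hu(t)]_{C^{\b}}\le[\n u(t)]_{C^{\b}}$, hence $\t_hu\in L^\infty([0,T],C^{\b}(\R^{N-1}))$, so Corollary~\ref{cor:mmp-lnmc} applies; applying it to $\t_hu$ and (by linearity of $L_h(t)$) to $-\t_hu$ yields $\sup_{[0,T]\times\R^{N-1}}|\t_hu|\le\sup_{\R^{N-1}}|\t_hu_0|\le\|\n u_0\|_{L^\infty}$. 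Letting $h\to0$ along an arbitrary direction gives $|\n u(t,x)|\le\|\n u_0\|_{L^\infty}$ for all $(t,x)$, which is~(i).

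For~(ii), for $h\in(0,T)$ I would put $\s_hu(t,x)=\frac{u(t+h,x)-u(t,x)}{h}$ on $[0,T-h]\times\R^{N-1}$. From $\partial_tu=-\cH(u)$ and the fundamental theorem of calculus along the segment from $u(t)$ to $u(t+h)$ one obtains $\cH(u(t+h))-\cH(u(t))=h\,B[u(t+h),u(t)]\,\s_hu$, hence $\partial_t(\s_hu)+B[u(t+h),u(t)]\,\s_hu=0$; for fixed $h>0$ one has $\s_hu\in L^\infty([0,T-h],C^{1+\b}(\R^{N-1}))\subset L^\infty([0,T-h],C^{\b}(\R^{N-1}))$, so Corollary~\ref{cor:mmp-lnmc} gives $\sup_{[0,T-h]\times\R^{N-1}}\s_hu=\sup_{\R^{N-1}}\s_hu(0,\cdot)$. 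Since $\partial_tu\in C^{\rho}([0,T],C^{\b-\a}(\R^{N-1}))$, we have $\s_hu(0,\cdot)\to\partial_tu(0,\cdot)=-\cH(u_0)$ uniformly as $h\to0^+$, while $\s_hu(t,x)\to\partial_tu(t,x)$ for each $(t,x)$; passing to the limit and using the continuity of $\partial_tu$ in $t$ up to $t=T$ gives $\sup_{[0,T]\times\R^{N-1}}\partial_tu\le-\inf_{\R^{N-1}}\cH(u_0)$, and the opposite inequality is immediate by evaluating $\partial_tu$ at $t=0$. The same argument with $-\s_hu$ gives $\inf_{[0,T]\times\R^{N-1}}\partial_tu=-\sup_{\R^{N-1}}\cH(u_0)$, which completes~(ii).

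For~(iii), since $H$ vanishes on affine functions one has $\cH(0)=0$, whence $\cH(u)=\cH(u)-\cH(0)=\int_0^1 D\cH(\varrho u)[u]\,d\varrho=B[u(t),0]\,u$, so $\partial_tu+B[u(t),0]\,u=0$. If $u_0\in L^\infty(\R^{N-1})$, then $u_0$ is bounded and (by $\|\n u_0\|_{C^{\g_\rho}}\le\nu$) Lipschitz, hence $u_0\in C^{\b}(\R^{N-1})$ and $u=(u-u_0)+u_0\in L^\infty([0,T],C^{\b}(\R^{N-1}))$; Corollary~\ref{cor:mmp-lnmc} with $w(t)=u(t,\cdot)$ and $v\equiv0$, applied to $u$ and, by linearity, to $-u$, then yields $\sup_{[0,T]\times\R^{N-1}}u=\sup_{\R^{N-1}}u_0$ and $\inf_{[0,T]\times\R^{N-1}}u=\inf_{\R^{N-1}}u_0$. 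I expect the main (purely technical) obstacle to be verifying, for each difference quotient, the precise regularity hypotheses of Corollary~\ref{cor:mmp-lnmc} --- in particular that $\s_hu$ lies in $L^\infty([0,T-h],C^{\b}(\R^{N-1}))$ for fixed $h$ (it does, with an $h$-dependent norm, which is harmless since the comparison principle is used with $h$ fixed), and that the base points $u(t,\cdot+h)$, $u(t+h,\cdot)$, $u(t,\cdot)$ and $0$ all stay in a single set $\calO^{\b}_{\nu'}$ --- and then the passage to the limit $h\to0$, carried out entirely through the $E_T$-regularity of $u$; no analytic ingredient beyond the comparison principle is needed.
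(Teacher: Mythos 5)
Your proof is correct and uses essentially the same approach as the paper: spatial difference quotients combined with the comparison principle of Corollary~\ref{cor:mmp-lnmc} for (i), temporal difference quotients for (ii), and the comparison principle applied to $u$ itself for (iii). Your small streamlinings --- working directly on $[0,T-h]$ without the intermediate $\e$-step that the paper inserts in (ii) via Theorem~\ref{MainResult1-part-2}, and invoking Corollary~\ref{cor:mmp-lnmc} through $\cH(0)=0$ rather than Proposition~\ref{prop:mmp} directly in (iii) --- are both legitimate given the $E_T$-regularity of $u-u_0$ from Theorem~\ref{MainResult1}, which already guarantees the structural hypotheses ($\s_hu\in L^\infty([0,T-h],C^{\b})$, $\s_hu(\cdot,x)\in C^1$, the base functions in a fixed $\calO^{\b}_{\nu'}$) that the comparison principle requires.
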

\begin{proof}
We start with $(iii)$. If $u_0\in L^\infty(\R^{N-1})$ then by   \eqref{eq:main-1-cor}  we have $u\in C^{\rho}([0,T],C^{1+\b} )\cap C^{1+\rho}([0,T], C^{\b-\a} )$. Now,  in view of \eqref{eq:exp-nmc-Bu}, we can apply Proposition \ref{prop:mmp}  to $u$ and $-u$ to  get $(iii)$. \\  
Next to prove $(i)$,  we define   $\t_hu(t,x)=\frac{u(t,x+h)-u(t,x)}{|h|}\in C([0,T], C^{1+\b})$, for $0<|h|<1$. We then have  
$$
\de_t\t_hu+ B[u(t,\cdot+h),u(t)]\t_hu(t)=0\qquad\textrm{ in $ [0,T]\times \R^{N-1}$}.
$$
It thus follows from  Corollary  \ref{cor:mmp-lnmc}   that $\|\t_h u\|_{L^\infty((0,T)\times \R^{N-1})}\leq \|\t_h u_0\|_{L^\infty}$. Hence, letting $h\to 0$, we get  $(i)$.\\
 Finally, by Theorem \ref{MainResult1-part-2} and Lemma \ref{lem:F-smooth-Holder} we have $\de_t u=-\cH(u)\in C^\rho([\e,T], C^{k+\b'-\a} )$ for all $\e\in (0,T)$ and $k\geq 0$.  Letting  $\t\in (\e,T)$, we define $u^\t(t,x)=\frac{u(t+\t,x)-u(t,x)}{\t}  $, for $\e<t\leq T-\t$. We thus get $ u^\t\in  C^{\rho}([\e,T-\t],C^{1+\b} )\cap C^{1+\rho}([\e,T-\t], C^{\b-\a} )$  and  
$$
\de_t u^\t +B[u(t+\t),u(t)]u^\t(t)=0\qquad\textrm{ on $ [\e,T-\t]\times \R^{N-1}$}.
$$
Applying  Corollary  \ref{cor:mmp-lnmc}, we   obtain
$$
 \sup_{[\e,T] \times \mathbb{R}^{N-1} } u^\t=   \sup_{ x\in \mathbb{R}^{N-1} }u^\t(\e,x)\qquad\textrm{ and } \qquad
 \inf_{[\e,T] \times \mathbb{R}^{N-1} } u^\t=   \inf_{ x\in \mathbb{R}^{N-1} }u^\t(\e,x) .
$$
  Since $\de_t u=-\cH(u)\in C^\rho( [0,T] , L^\infty ) $,  letting first $\t\to 0$ and then  $\e\to 0$ in the above identities, we get $(ii)$.
%
%
	\end{proof}
 \begin{proof}[Proof of Theorem  \ref{th:mt-univ-est} (completed)]
It suffices to apply Lemma \ref{lem:mmp-u-sol-zero}. 

	\end{proof}


\begin{thebibliography}
		\footnotesize
		
			\bibitem{abatangelo2014notion} N.  Abatangelo \& E. Valdinoci, (2014). A notion of nonlocal curvature. Numerical Functional Analysis and Optimization, 35(7-9), 793-815.
	
	\bibitem{abels2009cauchy} H. Abels \& M. Kassmann (2009). The Cauchy problem and the martingale problem for integro-differential operators with non-smooth kernels. Osaka Journal of Mathematics, 46(3), 661-683.
	
	
	
	
	\bibitem{attiogbe2021nonlocal}  A.  Attiogbe, M. M. Fall \& E. H. A. Thiam (2022). Nonlocal diffusion of smooth sets. Math. Eng. 4 , no. 2, Paper No. 009, 22 pp. 35R11.
	
	\bibitem{bass2009regularity} R. F. Bass. (2009). Regularity results for stable-like operators. Journal of Functional Analysis, 257(8), 2693-2722.
	
	
	\bibitem{bucur2016nonlocal} C. Bucur  \& E. Valdinoci,  (2016). Nonlocal diffusion and applications (Vol. 20, pp. xii-155). Cham: Springer.
	

\bibitem{BGet} R. M. Blumenthal and R. K. Getoor, (1960). { Some theorems on stable processes}, Transactions of the American Mathematical Society, 95(2), 263-273.

	
	\bibitem{caffarelli2009nonlocal} L.  Caffarelli, J. M.  Roquejoffre \& O. Savin (2010). Nonlocal minimal surfaces. Communications on Pure and Applied Mathematics, 63(9), 1111-1144.
	
	\bibitem{caffarelli2010convergence} L. A. Caffarelli  \& P. E. Souganidis,  (2010). Convergence of nonlocal threshold dynamics approximations to front propagation. Archive for rational mechanics and analysis, 195(1), 1-23.
	
\bibitem{CNR}	A. Chambolle, M. Novaga, and B. Ruffini, Some results on anisotropic fractional mean curvature flows, Interfaces Free Bound. 19 (2017), 393-415.
	
	\bibitem{cameron2019eventual} S. Cameron,  (2019). Eventual Regularization of Fractional Mean Curvature Flow. arXiv preprint arXiv:1905.09184.
	
	
	\bibitem{cesaroni2020symmetric}	A. Cesaroni \&  M. Novaga,  (2020). Symmetric self-shrinkers for the fractional mean curvature flow. The Journal of Geometric Analysis, 30(4), 3698-3715.
	
	\bibitem{cesaroni2021fractional} A. Cesaroni \& M. Novaga. (2021). Fractional mean curvature flow of Lipschitz graphs.   arXiv:2103.11346.
	
	\bibitem{cintifractional} E. Cinti, The fractional mean curvature flow.  Bruno Pini Mathematical Analysis Seminar, 11(1), 18-43. https://doi.org/10.6092/issn.2240-2829/10576.
	
\bibitem{CSV}	E. Cinti, C. Sinestrari \& E. Valdinoci, Convex sets evolving by volume preserving fractional mean curvature
flows. Preprint (2018).
	

	
	\bibitem{fall2020regularity} M. M. Fall,  (2020). Regularity results for nonlocal equations and applications. Calculus of Variations and Partial Differential Equations, 59(5), 1-53.
	
	
	\bibitem{fall2018constant} M. M. 	Fall,  (2018). Constant nonlocal mean curvatures surfaces and related problems. In Proceedings of the International Congress of Mathematicians (ICM 2018) (In 4 Volumes) Proceedings of the International Congress of Mathematicians 2018 (pp. 1613-1637).
	
	\bibitem{Fall-Weth} M.M. Fall, T. Weth, (2016). Liouville theorems for a general class of nonlocal operators. Potential Anal. 45 , no. 1, 187-200.
	

\bibitem{Huisken1} G. Huisken, Flow by mean curvature of convex surfaces into spheres, J. Differential Geom. 20 (1984), no. 1, 237-266.
%
	
	\bibitem{imbert2009level} C. Imbert, (2009). Level set approach for fractional mean curvature flows. Interfaces and Free Boundaries, 11(1), 153-176.
	
	\bibitem{julin2020short} V. Julin   \& D. A. La Manna.   Short time existence of the classical solution to the fractional mean curvature flow. In Annales de l'Institut Henri Poincar\'e, Analyse non lin\'eaire (Vol. 37, No. 4, pp. 983-1016). Elsevier Masson.
	

	
	\bibitem{lecrone2012elliptic} J. LeCrone,  (2012). Elliptic operators and maximal regularity on periodic little-H\"older spaces. Journal of Evolution Equations, 12(2), 295-325.
	
	\bibitem{lunardi2012analytic} A.  Lunardi, (2012). Analytic semigroups and optimal regularity in parabolic problems. Springer Science \& Business Media.
	
	\bibitem{Mantegazza} C. Mantegazza, (2011), Lecture Notes on Mean Curvature Flow, Progress in Mathematics, 290, Basel: Birkh\"auser/Springer, doi:10.1007/978-3-0348-0145-4.

	
	\bibitem{pazy2012semigroups} A. Pazy,   (1983). Semigroups of Linear Operators and Applications to Partial Differential Equations. Applied Mathematical Sciences, 44.
	
	
	\bibitem{saez2019evolution} M. S\'aez \& E. Valdinoci,  (2019). On the evolution by fractional mean curvature. Communications in Analysis and Geometry, 27(1), 211-249.
	
\bibitem{ScSi}	R. W. Schwab and L. Silvestre, (2016) Regularity for parabolic integro-differential equations with very irregular kernels, Anal. PDE 9 (2016), no. 3, 727-772.
	
\bibitem{Serra} Joaquim Serra, Regularity for fully nonlinear nonlocal parabolic equations with rough kernels, Calc. Var.
Partial Differential Equations 54 (2015), no. 1, 615-629.	
	\bibitem{sinestrari1985abstract} E. Sinestrari,  (1985). On the abstract Cauchy problem of parabolic type in spaces of continuous functions. Journal of mathematical analysis and applications, 107(1), 16-66.
	\end{thebibliography}
\end{document}